\theoremstyle{plain}
\newtheorem{theorem}{Theorem}[section]
\newtheorem{lemma}[theorem]{Lemma}
\newtheorem{corollary}[theorem]{Corollary}
\newtheorem{proposition}[theorem]{Proposition}
\newtheorem{conjecture}[theorem]{Conjecture}
\theoremstyle{definition}
\theoremstyle{remark}
\newtheorem{remark}[theorem]{Remark}
\newtheorem{problem}[theorem]{Problem}
\newcommand\Aut{\operatorname{Aut}}
\newcommand\Inn{\operatorname{Inn}}
\newcommand\Out{\operatorname{Out}}
\newcommand\Hom{\operatorname{Hom}}
\newcommand\GL{\operatorname{GL}}
\newcommand\IA{\operatorname{IA}}
\newcommand\IO{\operatorname{IO}}
\newcommand\Sym{\operatorname{Sym}}
\newcommand\sym{\operatorname{sym}}
\newcommand\alt{\operatorname{alt}}
\newcommand\id{\operatorname{id}}
\newcommand\gr{\mathrm{gr}}
\newcommand\sgn{\mathrm{sgn}}
\newcommand\Z{\mathbb{Z}}
\newcommand\Q{\mathbb{Q}}
\newcommand\A{\mathbf{A}}
\newcommand\gpS{\mathfrak{S}}
\newcommand\im{\operatorname{im}}
\newcommand\centre[1]{\begin{array}{c} #1 \end{array}}
\newcommand\centre{\input{[}}1]{\centre{\input{#1}}}
\title[Homology of $\IA_n$ with coefficients in Jacobi diagrams]{The first homology of $\IA_n$ with coefficients in spaces of Jacobi diagrams}
\author{Mai Katada}
\date{\today}
\address{Faculty of Mathematics, Kyushu University, 744, Motooka, Nishi-ku, Fukuoka, Japan 819-0395}
\email{katada@math.kyushu-u.ac.jp}
\keywords{Jacobi diagrams, Automorphism groups of free groups, General linear groups, IA-automorphism groups of free groups}
\subjclass[2020]{20F28, 20J06, 57K16}
\begin{document}
\maketitle

\begin{abstract}
The aim of this paper is to compute the first homology of $\IA_n$ with coefficients in the $\Aut(F_n)$-module $A_2(n)$ of Jacobi diagrams of degree $2$ on $n$-component oriented arcs.
We also prove the self-duality of the $\Aut(F_3)$-submodule $A''_2(3)$ of $A_2(3)$ which is generated by a certain element.
\end{abstract}

\section{Introduction}


The IA-automorphism group $\IA_n$ of the free group $F_n$ of rank $n$ is
defined to be the kernel of the canonical map from the automorphism group $\Aut(F_n)$ of $F_n$ to the general linear group $\GL(n,\Z)$ induced by the abelianization map of $F_n$.
It follows from the short exact sequence of groups
\begin{gather}\label{defofIA}
    1\to \IA_n\to \Aut(F_n)\to \GL(n,\Z)\to 1
\end{gather}
that the rational homology of $\IA_n$ admits a natural $\GL(n,\Z)$-module structure.
By the Hochschild--Serre spectral sequence associated to the short exact sequence \eqref{defofIA}, the rational homology of $\IA_n$ is related to the twisted homology of $\Aut(F_n)$ and $\GL(n,\Z)$.

The rational homology of $\IA_n$ has been studied by many authors.
The first homology of $\IA_n$ was determined independently by Cohen--Pakianathan \cite{CP}, Farb \cite{Farb} and Kawazumi \cite{Kawazumi}.
Krstic--McCool \cite{KM} proved that $\IA_3$ is not finitely presentable, and
Bestvina--Bux--Margalit \cite{BBM} proved that $H_2(\IA_3,\Z)$ has infinite rank.
Day--Putman \cite{DP} proved that $H_2(\IA_n,\Z)$ is finitely generated as a $\GL(n,\Z)$-representation.
The Albanese homology of $\IA_n$, which is a quotient $\GL(n,\Z)$-representation of the rational homology of $\IA_n$, is defined as the image of the map induced by the abelianization map of $\IA_n$ on homology.
Pettet \cite{Pettet} determined the second Albanese homology of $\IA_n$.
Satoh \cite{Satoh2021} obtained a subquotient $\GL(3,\Z)$-representation of $H_2(\IA_3,\Q)$ which is not contained in the Albanese homology of $\IA_3$.
We determined the Albanese homology in degree $3$ and studied it in higher degrees \cite{KatadaIA}.
However, even the second rational homology has not been completely determined, and higher degree rational homology of $\IA_n$ is still mysterious.

For any (right) $\IA_n$-module $M$, the homology of $\IA_n$ with coefficients in $M$ also admits a $\GL(n,\Z)$-module structure.
To the best of our knowledge, however, the homology of $\IA_n$ with any non-trivial coefficients has not been computed.
Any $\Aut(F_n)$-module which does not factorize through a $\GL(n,\Z)$-representation restricts to a non-trivial $\IA_n$-module.
The homology of $\IA_n$ with coefficients in such a module is related to the homology of $\Aut(F_n)$ with coefficients in it by the Hochschild--Serre spectral sequence.
As for the cohomology of $\Aut(F_n)$ with coefficients in such a module, the only study we know is by Satoh \cite{SatohFricke}.
He proved the non-triviality of the first cohomology of $\Aut(F_n)$ with coefficients in the right $\Aut(F_n)$-module $\Hom_{\Q}( J/J^2, J^2/J^3)$, where $J$ is the ideal of the ring of \emph{Fricke characters} that is generated by $\operatorname{tr} x-2$ for $x\in F_n$.

Among such coefficients, the easiest one to study may be what we call polynomial $\Aut(F_n)$-modules, which are obtained from algebraic $\GL(n,\Z)$-representations by extensions.
In this paper, we will study the homology of $\IA_n$ with coefficients in
the polynomial $\Aut(F_n)$-module $A_2(n)$
of \emph{Jacobi diagrams} of degree $2$ on $n$-component oriented arcs.
The $\Aut(F_n)$-module structure on the spaces $A_d(n)$ of Jacobi diagrams of degree $d$ was introduced and studied in \cite{Katada1, Katada2}. (See Section \ref{subsecJacobidiagrams} for details.)

In \cite{Katada1}, we obtained a direct decomposition of $\Aut(F_n)$-modules
$$A_2(n)=A'_2(n)\oplus A''_2(n),$$ 
where $A'_2(n)$ is irreducible and $A''_2(n)$ is indecomposable.
The indecomposable submodule $A''_2(n)$ uniquely admits the following composition series
\begin{gather*}
 A''_2(n)\supset A_{2,1}(n)\supset A_{2,2}(n)\supset 0,
\end{gather*}
where $A_{2,k}(n)$ is the subspace of $A''_2(n)$ consisting of Jacobi diagrams with at least $k$ trivalent vertices for $k=1,2$.
Therefore, we have
\begin{gather*}
    H_1(\IA_n,A_2(n))\cong H_1(\IA_n,A'_2(n))\oplus H_1(\IA_n,A''_2(n)),
\end{gather*}
and we can easily compute the $\GL(n,\Z)$-module structure of $H_1(\IA_n,A'_2(n))$ since $\IA_n$ acts trivially on $A'_2(n)$.
Hence, we have only to study the $\GL(n,\Z)$-module structure of $H_1(\IA_n,A''_2(n))$.
The following is one of our main theorems.

\begin{theorem}[Theorems \ref{thmH1A2''} and \ref{thmH1A2''3}]
 We have
\begin{gather*}
  \begin{split}
        H_1(\IA_n, A''_2(n))\cong 
        \begin{cases}
        V_{1^2,2^21}\oplus V_{1^2,32}\oplus V_{1,21^2}\oplus V_{1,2^2}^{\oplus 2} \oplus V_{1,31}\oplus V_{0,21}^{\oplus 2} & n\ge 5\\
        V_{1^2,32}\oplus V_{1,21^2}\oplus V_{1,2^2}^{\oplus 2} \oplus V_{1,31}\oplus V_{0,21}^{\oplus 2} & n= 4\\
        V_{1,2^2}\oplus V_{1,31}\oplus V_{0,21}^{\oplus 2} & n=3,
        \end{cases}
  \end{split} 
\end{gather*}
where $V_{\lambda^+,\lambda^-}$ denotes the irreducible algebraic $\GL(n,\Z)$-representation corresponding to a pair $(\lambda^+,\lambda^-)$ of partitions.
\end{theorem}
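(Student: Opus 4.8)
The plan is to analyse $H_1(\IA_n,A''_2(n))$ through the finite filtration $0\subset A_{2,2}(n)\subset A_{2,1}(n)\subset A''_2(n)$, using the fundamental fact that $\IA_n$ acts trivially on each associated graded quotient $Q_0:=A''_2(n)/A_{2,1}(n)$, $Q_1:=A_{2,1}(n)/A_{2,2}(n)$, $Q_2:=A_{2,2}(n)$, each of which is an explicit algebraic $\GL(n,\Z)$-representation determined in \cite{Katada1}. This triviality reflects the compatibility of the filtration by number of trivalent vertices with the Andreadakis filtration: an IA-automorphism changes a Jacobi diagram by diagrams with strictly more trivalent vertices, and the induced first-order action of $\opegr^1\IA_n=H_1(\IA_n;\Q)$ on $\bigoplus_k A_{2,k}(n)/A_{2,k+1}(n)$ is the standard explicit Johnson-type action. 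Since $H_1(\IA_n;\Q)\cong H^*\otimes\wedge^2 H\cong V_{1^2,1}\oplus V_{1,0}$ with $H=\Q^n$ (Cohen--Pakianathan, Farb, Kawazumi), we obtain $H_0(\IA_n,Q_i)\cong Q_i$ and $H_1(\IA_n,Q_i)\cong(V_{1^2,1}\oplus V_{1,0})\otimes Q_i$ as $\GL(n,\Z)$-representations.

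First I would write down the spectral sequence of this filtration (equivalently, chain the long exact sequences of $0\to A_{2,1}(n)\to A''_2(n)\to Q_0\to0$ and $0\to A_{2,2}(n)\to A_{2,1}(n)\to Q_1\to0$) and decompose its entries $(V_{1^2,1}\oplus V_{1,0})\otimes Q_i$ into irreducible $\GL(n,\Z)$-representations via Littlewood--Richardson/Pieri. This is the origin of the case distinction: constituents such as $V_{1^2,2^21}$, $V_{1,21^2}$ and $V_{1^2,32}$ have $\ell(\lambda^+)+\ell(\lambda^-)$ up to $5$, and the tensor-product decomposition is only stable for $n$ large, so for $n=4$ and $n=3$ one re-runs the computation with the small-rank modification rules. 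Next I would identify the $d_1$-differentials, i.e.\ the connecting homomorphisms: on $H_0$ these are the $\GL(n,\Z)$-equivariant maps $(V_{1^2,1}\oplus V_{1,0})\otimes Q_1\to Q_2$ and $(V_{1^2,1}\oplus V_{1,0})\otimes Q_0\to Q_1$ coming from the $\opegr^1\IA_n$-action, each determined by Schur's lemma up to scalars pinned down by a single Jacobi-diagram computation, and on $H_1$ the corresponding maps twisted by $V_{1^2,1}\oplus V_{1,0}$. Computing their kernels and cokernels gives the $E_2$-page.

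The main obstacle is controlling the connecting maps whose source involves $H_2(\IA_n;\Q)$, such as $H_2(\IA_n,Q_0)\to H_1(\IA_n,A_{2,1}(n))$: since $H_2(\IA_n;\Q)$ is not known, and has infinite rank for $n=3$ by Bestvina--Bux--Margalit, this cannot be attacked head-on. The key observation is that these maps are cap products with the extension classes of the filtration, so only the part of $H_2(\IA_n;\Q)$ detected by cup products of $H^1(\IA_n;\Q)$ can contribute; this ``decomposable'' (Albanese) part is computed by Pettet \cite{Pettet}, and combined with the explicit secondary extension data coming from the $\opegr^{\le 2}$ Johnson action on $A_2(n)$ it should suffice to show these differentials, and all higher ones, vanish in total degree $1$, so that $E_2=E_\infty$ there. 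For $n=3$ I expect this is supplemented by the self-duality of $A''_2(3)$ proved in the paper, which relates $H_1(\IA_3,A''_2(3))$ to $H^1(\IA_3,A''_2(3))$ and thereby constrains the answer where the $H_2$-arguments weaken in rank $3$.

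Finally I would read off $\opegr H_1(\IA_n,A''_2(n))$ from $E_\infty$ and check that the filtration on $H_1(\IA_n,A''_2(n))$ splits as a $\GL(n,\Z)$-representation; the only delicate point is the constituents $V_{1,2^2}$ and $V_{0,21}$ of multiplicity two, for which one verifies that both copies already occur within one graded layer, so that no non-split extension between layers is possible. Specializing the bookkeeping to $\GL(4,\Z)$ and $\GL(3,\Z)$ — where $V_{1^2,2^21}$ and $V_{1,21^2}$ drop out for length reasons, $V_{1^2,32}$ drops for $n=3$, and in rank $3$ one $d_1$- or $d_2$-differential that vanished for large $n$ becomes nonzero once its source and target come to share a constituent — then yields the stated formulas for $n=4$ and $n=3$.
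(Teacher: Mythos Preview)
Your overall strategy—running the spectral sequence of the filtration $A_{2,2}\subset A_{2,1}\subset A''_2$, equivalently chaining the two long exact sequences—is the same framework the paper uses, and the paper itself remarks in Section~\ref{secperspectives} that the spectral sequence viewpoint is an alternative packaging of its argument. Your observation that the connecting maps $Q_i\otimes H_2(\IA_n;\Q)\to Q_{i+1}\otimes H_1(\IA_n;\Q)$ factor through the Albanese quotient $Q_i\otimes H_2^A(\IA_n;\Q)$ (being cap products with a class pulled back from $H^1(\IA_n^{\ab})$) is correct and useful. The paper, however, does not argue this way: it obtains \emph{lower} bounds on the images of the connecting maps by evaluating them on explicit abelian $2$-cycles (Proposition~\ref{imageboundary2}, Lemma~\ref{imageboundary2'}), and it obtains the matching \emph{upper} bounds not from Pettet's theorem but by bringing in the two ``transverse'' long exact sequences of $0\to A_{2,2}\to A''_2\to A''_2/A_{2,2}\to 0$ and $0\to Q_1\to A''_2/A_{2,2}\to Q_0\to 0$ (the argument of Theorem~\ref{thmH1A''2/A22}, together with the commutative triangle \eqref{commutativediagrampartial2} and the injectivity of $\pi_*$ from Corollary~\ref{pi}). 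Your proposal does not mention this device, and without it you would have to compute the Albanese connecting map in full rather than merely bound it.

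There is a concrete misdirection in your $n=3$ plan. The paper does \emph{not} use the self-duality of $A''_2(3)$ anywhere in the proof of Theorem~\ref{thmH1A2''3}; self-duality is proved separately in Section~\ref{secselfdual} and is used only for the cohomological Corollary after it. For $n=3$ the paper instead invokes the Day--Putman theorem that $H_2(\IA_3;\Z)$ is finitely $\GL(3,\Z)$-generated by an explicit list of commutator-relator classes, and evaluates $\partial'_2$ on each of these to determine $\im(\partial'_2)\cong V_{1,2}\oplus V_{0,1}$ exactly (Proposition~\ref{imageboundary2'n3}); from there it computes $H_1(\IA_3,A_{2,1}(3))$ and then shows $\partial''_2$ is surjective by two further explicit evaluations. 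It is not clear how self-duality would ``constrain the answer'' as you suggest: it identifies $A''_2(3)$ with its dual, but that does not by itself reduce the $H_1$ computation. Ironically, your own Albanese factorisation, if you had followed it through, gives a cleaner route than the paper's here: since $Q_1\cong\det^{-1}$ for $n=3$, Pettet's formula yields $Q_1\otimes H_2^A(\IA_3)\cong V_{1,2}\oplus V_{0,1}$, already the correct upper bound on $\im(\partial'_2)$ without appealing to Day--Putman; equality then follows from two nonvanishing checks on abelian cycles. You should pursue that line rather than self-duality.
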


We also compute the first homology of $\IA_n$ with coefficients in the other non-trivial $\IA_n$-modules $A''_2(n)/A_{2,2}(n)$ and $A_{2,1}(n)$.

A topological meaning of the homology of $\IA_n$ with coefficients in the spaces of Jacobi diagrams is as follows.
The degree completion of the spaces of Jacobi diagrams $A_d(n)$ appears as the target space of the \emph{Kontsevich integral} for $n$-component \emph{bottom tangles}.
We have an action of the \emph{handlebody group} $\mathcal{H}_n$
on the space $B(n)$ of $n$-component bottom tangles \cite{HM}.
The action of $\Aut(F_n)$ on $A_d(n)$ can be interpreted as a restriction of this action of $\mathcal{H}_n$ on $B(n)$ via the surjective group homomorphism from $\mathcal{H}_n$ to $\Aut(F_n)$ that is induced by the action of $\mathcal{H}_n$ on the fundamental group of the handlebody \cite{Katada1}.
Therefore, the homology of $\Aut(F_n)$ with coefficients in $A_d(n)$ can be regarded as an approximation of the homology of $\mathcal{H}_n$ with coefficients in $B(n)$.
Moreover, the homology of $\IA_n$ with coefficients in $A_d(n)$ can be regarded as an approximation of the homology of the kernel of the composition map
$\mathcal{H}_n\twoheadrightarrow \Aut(F_n)\twoheadrightarrow \GL(n,\Z)$
with coefficients in $B(n)$.

The second aim of this paper is to prove that the $\Aut(F_3)$-module $A''_2(3)$ is self-dual and that $A''_2(3)/A_{2,2}(3)$ is isomorphic to $A_{2,1}(3)^*$, which we conjectured in \cite[Remark 7.13]{Katada1}.

\begin{theorem}[Theorem \ref{selfduality} and Corollary \ref{dualA21andA''2/A22}]\label{introthmdual}
    We have an isomorphism of $\Aut(F_3)$-modules
    \begin{gather*}
        \Phi: A''_2(3)\to A''_2(3)^*,
    \end{gather*}
    which induces an isomorphism of $\Aut(F_3)$-modules
    \begin{gather*}
        \Phi\mid_{A''_2(3)/A_{2,2}(3)}: A''_2(3)/A_{2,2}(3)\to A_{2,1}(3)^*.
    \end{gather*}
\end{theorem}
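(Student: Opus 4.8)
The plan is to deduce both statements from a single non-degenerate $\Aut(F_3)$-invariant bilinear form $\langle-,-\rangle$ on $A''_2(3)$, exploiting the uniserial structure recorded above. By \cite{Katada1}, $A''_2(3)$ is generated as an $\Aut(F_3)$-module by one degree-$2$ Jacobi diagram $c$ with no trivalent vertex; consequently an $\Aut(F_3)$-homomorphism $\Phi\colon A''_2(3)\to A''_2(3)^*$ is the same datum as a functional $\varphi=\Phi(c)$ satisfying the $\Aut(F_3)$-relations that $c$ satisfies, which is exactly the statement that $\langle x,y\rangle:=(\Phi x)(y)$ is $\Aut(F_3)$-invariant with $\langle c,-\rangle=\varphi$. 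So everything comes down to exhibiting the correct form.

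First I would write down $\langle-,-\rangle$ explicitly via a natural pairing operation on Jacobi diagrams on three arcs, well defined modulo the defining relations of $A_2(3)$. The crucial structural feature, visible from counting trivalent vertices, is that $\langle-,-\rangle$ is block anti-triangular for the filtration $A''_2(3)\supset A_{2,1}(3)\supset A_{2,2}(3)$: one has $\langle A_{2,2}(3),A_{2,1}(3)\rangle=0$, so that $A_{2,2}(3)$ lies in the radical of $\langle-,-\rangle|_{A_{2,1}(3)}$, and the only potentially nonzero blocks are the induced pairings $A_{2,2}(3)\times\bigl(A''_2(3)/A_{2,1}(3)\bigr)\to\Q$ and $\bigl(A_{2,1}(3)/A_{2,2}(3)\bigr)\times\bigl(A_{2,1}(3)/A_{2,2}(3)\bigr)\to\Q$.

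The heart of the proof, and what I expect to be the main obstacle, is the $\Aut(F_3)$-invariance of $\langle-,-\rangle$. Since $\Aut(F_3)$ is generated by Nielsen transformations, it suffices to check invariance on these; for the generators mapping onto the permutation and sign-change generators of $\GL(3,\Z)$ this is the evident naturality of the construction under relabeling and reorienting arcs, and the one remaining generator $\mu\colon x_1\mapsto x_1x_2$ is the delicate case. Its action on $A_2(3)$, given by the explicit cabling formula of \cite{Katada1, Katada2}, does not respect the filtration: it produces correction terms involving trivalent vertices through the STU relation. The point of the computation is that in $\langle\mu x,\mu y\rangle-\langle x,y\rangle$ these filtration-changing contributions cancel, precisely thanks to the anti-triangular shape of $\langle-,-\rangle$; this I would verify on an explicit basis of $A''_2(3)$. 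Non-degeneracy is then comparatively routine: by anti-triangularity it reduces to the two blocks above being perfect, and each, being $\GL(3,\Z)$-invariant between irreducible representations, is perfect as soon as it is nonzero, which follows from a single evaluation — equivalently, from producing some $y\in A_{2,2}(3)$ with $\langle c,y\rangle\neq0$.

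Granting invariance and non-degeneracy, both conclusions are formal. The adjoint $\Phi$ is a nonzero $\Aut(F_3)$-homomorphism, and since $\varphi=\Phi(c)$ does not annihilate $A_{2,2}(3)$, it lies outside the unique maximal submodule $\bigl(A''_2(3)/A_{2,2}(3)\bigr)^*$ of the uniserial module $A''_2(3)^*$; as $c$ generates $A''_2(3)$, this forces $\Phi$ to be surjective, hence an isomorphism by equality of dimensions. Comparing the ordered composition series of $A''_2(3)$ and $A''_2(3)^*$ also yields the factorwise self-dualities $A''_2(3)/A_{2,1}(3)\cong A_{2,2}(3)^*$ and $A_{2,1}(3)/A_{2,2}(3)\cong\bigl(A_{2,1}(3)/A_{2,2}(3)\bigr)^*$. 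Finally, $\Phi\bigl(A_{2,2}(3)\bigr)$ is a simple submodule of $A''_2(3)^*$, hence equals its socle $\bigl(A''_2(3)/A_{2,1}(3)\bigr)^*$, the annihilator of $A_{2,1}(3)$; therefore $\Phi$ descends to a map $A''_2(3)/A_{2,2}(3)\to A''_2(3)^*/\bigl(A''_2(3)/A_{2,1}(3)\bigr)^*\cong A_{2,1}(3)^*$, which is injective because $\Phi$ is and surjective by a dimension count, hence the desired isomorphism.
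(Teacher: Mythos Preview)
Your approach and the paper's coincide at the core: both construct $\Phi$ explicitly and verify $\Aut(F_3)$-equivariance on the four Nielsen generators $U,P,\sigma,Q$, with the transvection $U$ being the only nontrivial case. The paper does this with no conceptual wrapping: it fixes a basis $\mathcal B=\{v_i,v_{ij},u_{123},w_i,w_{ij}\}$ of thirteen diagrams for $A''_2(3)$, tabulates the action of each Nielsen generator on $\mathcal B$ and on the dual basis $\mathcal B^*$, writes $\Phi$ down as an explicit linear map on $\mathcal B$, and checks $\Phi(xX)=\Phi(x)X$ entry by entry. The corollary is then stated without further argument.

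Your added layer---viewing $\Phi$ as the adjoint of a symmetric form that is block anti-triangular for the filtration, reducing non-degeneracy via Schur's lemma to one nonzero evaluation on each block, and deducing the corollary from uniseriality by identifying $\Phi(A_{2,2}(3))$ with the socle $(A''_2(3)/A_{2,1}(3))^*=A_{2,1}(3)^\perp$---is correct and sharpens the exposition. One caution: the ``natural pairing operation on Jacobi diagrams'' you invoke is left unspecified, and the paper offers no such intrinsic description; the coefficients in its $\Phi$ (namely $-3$, $-\tfrac34$, $6$, $\tfrac32$, $1$ on the various basis pairs) do not visibly arise from a gluing or weight-system construction. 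In practice you should expect to write down essentially the same matrix and perform the same generator-by-generator check. The anti-triangular shape will then indeed be visible---and is in fact forced by uniseriality once $\Phi$ is an $\Aut(F_3)$-map with $\Phi|_{A_{2,2}(3)}\neq 0$---but invariance under $U$ still requires the full explicit computation that you correctly flag as the heart of the proof.
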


By using the Hochschild--Pirashvili homology (or higher Hochschild homology) on the wedges of circles and the coalgebra of dual numbers, Turchin--Willwacher \cite{TW} constructed an $\Out(F_n)$-module $U^{I}_3$, which does not factorize through a $\GL(n,\Z)$-representation.
It follows from the work of Kielak \cite{Kielak} that for $n=3$, their module $U^{I}_3$ gives the first example of such $\Out(F_3)$-modules of the smallest dimension $7$.
In \cite{Katada1}, we proved that the $\Aut(F_n)$-action on $A_d(n)$ induces an action of the outer automorphism group $\Out(F_n)$ of $F_n$.
The dimension of $A_{2,1}(3)$ and $A''_2(3)/A_{2,2}(3)$ are both $7$, and $A_{2,1}(3)$ is isomorphic to $U^{I}_3$ as $\Out(F_3)$-modules.
Theorem \ref{introthmdual} implies that $A''_2(3)/A_{2,2}(3)$ is isomorphic to the dual of $U^{I}_3$ as $\Out(F_3)$-modules.

\subsection{Outline}
 The rest of the paper is organized as follows.
 In Section \ref{secPreliminaries}, we will recall algebraic $\GL(n,\Q)$-representations and the $\Aut(F_n)$-module structure of the spaces of Jacobi diagrams.
 In Section \ref{secPreliminariesofhomology}, we will give irreducible decomposition of the first homology of $\IA_n$ with coefficients in trivial $\IA_n$-modules.
 In Section \ref{secH1A2''/A22}, we will compute the first homology of $\IA_n$ with coefficients in $A''_2(n)/A_{2,2}(n)$.
 In Sections \ref{secH1A21} and \ref{secH1A''2}, we will compute the first homology of $\IA_n$ for $n\ge 4$ with coefficients in $A_{2,1}(n)$ and $A''_2(n)$, respectively.
 In Section \ref{secselfdual}, we will prove that $A''_2(3)$ is a self-dual $\Aut(F_3)$-module.
 In Section \ref{secH1A''23}, we will compute the first homology of $\IA_3$ with coefficients in $A''_2(3)$.
 In Section \ref{secH2}, we will consider the second and higher degree homology of $\IA_n$ with coefficients in $A_2(n)$.
 In Section \ref{secH1IO}, we will study the homology of a subgroup $\IO_n$ of $\Out(F_n)$, which is analogous to $\IA_n$, with coefficients in $A_2(n)$.
 In Section \ref{secperspectives}, we will consider the cases of the spaces of Jacobi diagrams of degree not smaller than $3$. 

\subsection*{Acknowledgement}
The author would like to thank Kazuo Habiro for valuable advice. 
She also thanks Takao Satoh for conversation about topics around homology of $\IA_n$ and $\Aut(F_n)$.
This work was supported by JSPS KAKENHI Grant Number JP22J14812.

\section{Preliminaries}\label{secPreliminaries}

Let $n\ge 1$.
Let $F_n=\langle x_1,\dots,x_n\rangle$ be the free group of rank $n$.
Let $H=H(n)=H_1(F_n,\Q)$ and fix a basis $\{e_i\}_{i=1}^{n}$ for $H$.
We take the dual basis $\{e_i^*\}_{i=1}^{n}$ for $H^*=H(n)^*\cong H^1(F_n,\Q)$. 

Let $\Aut(F_n)$ denote the automorphism group of $F_n$, and $\GL(n,\Z)$ the general linear group over $\Z$.
The abelianization of $F_n$ induces a group homomorphism from $\Aut(F_n)$ to $\GL(n,\Z)$, whose kernel is called the IA-automorphism group of $F_n$ and denoted by $\IA_n$.

Throughout this paper, we will consider vector spaces over $\Q$, and $G$-modules mean $\Q[G]$-modules for $G=\IA_n, \Aut(F_n)$ and $\GL(n,\Z)$.

In this section, we will recall some facts from representation theory of the general linear group over $\Q$ and the $\Aut(F_n)$-module structure of the spaces of Jacobi diagrams.

\subsection{Algebraic $\GL(n,\Q)$-representations}
Here we will recall some notions from representation theory. See Fulton--Harris \cite{FH} and Koike \cite{Koike} for details.

A $\GL(n,\Q)$-representation $V$ is \emph{algebraic} if after choosing a basis for $V$, the $(\dim V)^2$ coordinate functions of the action $\GL(n,\Q)\to \GL(V)$ are rational functions of the $n^2$ variables.

It is well known that algebraic $\GL(n,\Q)$-representations are completely reducible, and that irreducible algebraic $\GL(n,\Q)$-representations are classified by \emph{bipartitions}, that is, pairs of partitions.
Here, a \emph{partition} $\lambda=(\lambda_1,\dots,\lambda_n)$ with at most $n$ parts is a sequence of non-negative integers such that $\lambda_1\ge \lambda_2\ge \dots\ge \lambda_n$.

Irreducible algebraic $\GL(n,\Q)$-representation $V_{\underline\lambda}=V_{\underline{\lambda}}(n)$
corresponding to a bipartition $\underline\lambda=(\lambda^+,\lambda^-)$ can be constructed as follows.

Let $l(\lambda)=\max(\{0\}\cup \{i\mid \lambda_i> 0\})$ denote the \emph{length} of $\lambda$ and $|\lambda|=\sum_{i=1}^{l(\lambda)}\lambda_i$ the \emph{size} of $\lambda$.
A \emph{Young diagram} of $\lambda$ consists of $\lambda_i$ boxes in the $i$-th row so that the rows of boxes are left-aligned.
A \emph{tableau} on a Young diagram is a bijection from the set of integers $\{1,\dots, |\lambda|\}$ to the set of the boxes.
The \emph{canonical tableau} is the tableau whose numbering starts from the first row from left to right and followed by the second row from left to right and so on.
A tableau is called \emph{standard} if the numbering is strictly increasing in each row and each column.
The Young symmetrizer $c_{\lambda}\in \Q[\gpS_{|\lambda|}]$ is defined by
\begin{gather*}
    c_{\lambda}=\sum_{\substack{\sigma\in R\\\tau\in C}} \sgn(\tau)\tau\sigma,
\end{gather*}
where $R\subset \gpS_{|\lambda|}$ (resp. $C\subset \gpS_{|\lambda|}$) is the subgroup consisting of elements of $\gpS_{|\lambda|}$ preserving rows (resp. columns) of the canonical tableau on the Young diagram corresponding to $\lambda$.
Let $S^{\lambda}=\Q[\gpS_{|\lambda|}] c_{\lambda}$ denote the \emph{Specht module} corresponding to $\lambda$.

Consider $H$ as the standard representation of $\GL(n,\Q)$ and $H^*$ as the dual representation.
Let $p$ and $q$ be positive integers.
For $k\in \{1,\dots, p\},l\in \{1,\dots,q\}$, the \emph{contraction map} 
\begin{gather*}
    c_{k,l}: H^{\otimes p}\otimes (H^*)^{\otimes q} \to H^{\otimes p-1}\otimes (H^*)^{\otimes q-1}
\end{gather*}
is defined by 
\begin{gather*}
    c_{k,l}(v_1\otimes \cdots\otimes v_p\otimes f_1\otimes \cdots\otimes f_q)=\langle v_k,f_l\rangle v_1\otimes \cdots \hat{v}_k \cdots\otimes v_p\otimes f_1\otimes\cdots\hat{f}_l \cdots\otimes f_q,
\end{gather*}
where $\langle -,- \rangle: H\otimes H^*\to \Q$ denotes the dual pairing, and where $\hat{v}_k$ and $\hat{f}_l$ denote the omission of $v_k$ and $f_l$.
The \emph{traceless part} $T^{p,q}\subset H^{\otimes p}\otimes (H^*)^{\otimes q}$ is defined by
\begin{gather*}
    T^{p,q}=\bigcap_{\substack{k\in \{1,\dots, p\}\\ l\in \{1,\dots, q\}}} \ker c_{k,l}.
\end{gather*}
Define
$$V_{\underline\lambda}=T^{p,q}\otimes_{\Q[\gpS_{|\lambda^+|}\times \gpS_{|\lambda^-|}]} S^{\lambda^+}\otimes S^{\lambda^-}.$$
If $n\ge l(\lambda^+)+l(\lambda^-)$, then $V_{\underline\lambda}$ is irreducible and otherwise we have $V_{\underline\lambda}=0$. 
Note that we have $V_{1,0}=H$ and $V_{0,1}=H^*$.

For two bipartitions $\underline{\lambda}, \underline{\mu}$ such that $n\ge l(\lambda^+)+l(\lambda^-)+l(\mu^+)+l(\mu^-)$, we have the following formula of the irreducible decomposition of the tensor product of $V_{\underline{\lambda}}$ and $V_{\underline{\mu}}$
\begin{gather}\label{Koike}
V_{\underline{\lambda}}\otimes V_{\underline{\mu}}\cong \bigoplus_{\underline{\nu}} V_{\underline{\nu}}^{\oplus N_{\underline{\lambda} \underline{\mu}}^{\underline{\nu}}},\quad
N_{\underline{\lambda} \underline{\mu}}^{\underline{\nu}}=
\sum_{\alpha\beta\theta\delta}
(\sum_{\kappa}N_{\kappa\alpha}^{\lambda^+}N_{\kappa\beta}^{\mu^-})
(\sum_{\epsilon}N_{\epsilon\theta}^{\lambda^-}N_{\epsilon\delta}^{\mu^+})N_{\alpha\delta}^{\nu^+}N_{\beta\theta}^{\nu^-},
\end{gather}
where $N_{\lambda\mu}^{\nu}$ is the Littlewood--Richardson coefficient (see \cite{Koike} for details).

An \emph{algebraic $\GL(n,\Z)$-representation} is the restriction of an algebraic $\GL(n,\Q)$-representation to $\GL(n,\Z)$.

\subsection{The $\Aut(F_n)$-module $A_d(n)$ of Jacobi diagrams}\label{subsecJacobidiagrams}
Here we recall the vector spaces of Jacobi diagrams and an action of $\Aut(F_n)$ on the spaces of Jacobi diagrams, which was studied in \cite{Katada1, Katada2}.

A \emph{Jacobi diagram} on $n$-component oriented arcs is a uni-trivalent graph whose trivalent vertices are equipped with cyclic orders.
Two Jacobi diagrams are regarded as the same if there is a homeomorphism between them whose restriction to the arc components is isotopic to the identity map.

Let $A_d(n)$ denote the vector space spanned by Jacobi diagrams of degree $d$ on $n$ oriented arcs modulo the STU relations:
\begin{gather*}
\scalebox{0.7}{$\centre{
\begingroup%
  \makeatletter%
  \providecommand\color[2][]{%
    \errmessage{(Inkscape) Color is used for the text in Inkscape, but the package 'color.sty' is not loaded}%
    \renewcommand\color[2][]{}%
  }%
  \providecommand\transparent[1]{%
    \errmessage{(Inkscape) Transparency is used (non-zero) for the text in Inkscape, but the package 'transparent.sty' is not loaded}%
    \renewcommand\transparent[1]{}%
  }%
  \providecommand\rotatebox[2]{#2}%
  \newcommand*\fsize{\dimexpr\f@size pt\relax}%
  \newcommand*\lineheight[1]{\fontsize{\fsize}{#1\fsize}\selectfont}%
  \ifx\svgwidth\undefined%
    \setlength{\unitlength}{64.04390103bp}%
    \ifx\svgscale\undefined%
      \relax%
    \else%
      \setlength{\unitlength}{\unitlength * \real{\svgscale}}%
    \fi%
  \else%
    \setlength{\unitlength}{\svgwidth}%
  \fi%
  \global\let\svgwidth\undefined%
  \global\let\svgscale\undefined%
  \makeatother%
  \begin{picture}(1,0.67292281)%
    \lineheight{1}%
    \setlength\tabcolsep{0pt}%
    \put(0,0){\includegraphics[width=\unitlength,page=1]{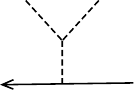}}%
  \end{picture}%
\endgroup%
}$}=\scalebox{0.7}{$\centre{
\begingroup%
  \makeatletter%
  \providecommand\color[2][]{%
    \errmessage{(Inkscape) Color is used for the text in Inkscape, but the package 'color.sty' is not loaded}%
    \renewcommand\color[2][]{}%
  }%
  \providecommand\transparent[1]{%
    \errmessage{(Inkscape) Transparency is used (non-zero) for the text in Inkscape, but the package 'transparent.sty' is not loaded}%
    \renewcommand\transparent[1]{}%
  }%
  \providecommand\rotatebox[2]{#2}%
  \newcommand*\fsize{\dimexpr\f@size pt\relax}%
  \newcommand*\lineheight[1]{\fontsize{\fsize}{#1\fsize}\selectfont}%
  \ifx\svgwidth\undefined%
    \setlength{\unitlength}{64.04525894bp}%
    \ifx\svgscale\undefined%
      \relax%
    \else%
      \setlength{\unitlength}{\unitlength * \real{\svgscale}}%
    \fi%
  \else%
    \setlength{\unitlength}{\svgwidth}%
  \fi%
  \global\let\svgwidth\undefined%
  \global\let\svgscale\undefined%
  \makeatother%
  \begin{picture}(1,0.67044537)%
    \lineheight{1}%
    \setlength\tabcolsep{0pt}%
    \put(0,0){\includegraphics[width=\unitlength,page=1]{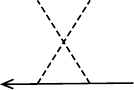}}%
  \end{picture}%
\endgroup%
}$}-\scalebox{0.7}{$\centre{
\begingroup%
  \makeatletter%
  \providecommand\color[2][]{%
    \errmessage{(Inkscape) Color is used for the text in Inkscape, but the package 'color.sty' is not loaded}%
    \renewcommand\color[2][]{}%
  }%
  \providecommand\transparent[1]{%
    \errmessage{(Inkscape) Transparency is used (non-zero) for the text in Inkscape, but the package 'transparent.sty' is not loaded}%
    \renewcommand\transparent[1]{}%
  }%
  \providecommand\rotatebox[2]{#2}%
  \newcommand*\fsize{\dimexpr\f@size pt\relax}%
  \newcommand*\lineheight[1]{\fontsize{\fsize}{#1\fsize}\selectfont}%
  \ifx\svgwidth\undefined%
    \setlength{\unitlength}{64.04503507bp}%
    \ifx\svgscale\undefined%
      \relax%
    \else%
      \setlength{\unitlength}{\unitlength * \real{\svgscale}}%
    \fi%
  \else%
    \setlength{\unitlength}{\svgwidth}%
  \fi%
  \global\let\svgwidth\undefined%
  \global\let\svgscale\undefined%
  \makeatother%
  \begin{picture}(1,0.65917634)%
    \lineheight{1}%
    \setlength\tabcolsep{0pt}%
    \put(0,0){\includegraphics[width=\unitlength,page=1]{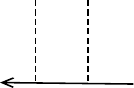}}%
  \end{picture}%
\endgroup%
}$}.
\end{gather*}

Then the vector space $A_d(n)$ admits the following right $\Aut(F_n)$-module structure.
An automorphism $f: F_n\to F_n$ can be identified with an $n$-component oriented arcs mapped into a handlebody of genus $n$ in such a way that the endpoints of the arcs are uniformly distributed on the bottom line and the $i$-th component goes from the $2i$-th point to the $(2i-1)$-st point, where we count the endpoints from left to right.
We will write the oriented arcs in the handlebody corresponding to the automorphism $f$ as $Z(f)$.
For example, the automorphism
\begin{gather}\label{exampleU12}
    f : F_3\to F_3, \quad x_1\mapsto x_1x_2, \; x_2\mapsto x_2,\; x_3\mapsto x_3
\end{gather}
is identified with 
\begin{gather*}
    Z(f)=\centre{
\begingroup%
  \makeatletter%
  \providecommand\color[2][]{%
    \errmessage{(Inkscape) Color is used for the text in Inkscape, but the package 'color.sty' is not loaded}%
    \renewcommand\color[2][]{}%
  }%
  \providecommand\transparent[1]{%
    \errmessage{(Inkscape) Transparency is used (non-zero) for the text in Inkscape, but the package 'transparent.sty' is not loaded}%
    \renewcommand\transparent[1]{}%
  }%
  \providecommand\rotatebox[2]{#2}%
  \newcommand*\fsize{\dimexpr\f@size pt\relax}%
  \newcommand*\lineheight[1]{\fontsize{\fsize}{#1\fsize}\selectfont}%
  \ifx\svgwidth\undefined%
    \setlength{\unitlength}{75.751171bp}%
    \ifx\svgscale\undefined%
      \relax%
    \else%
      \setlength{\unitlength}{\unitlength * \real{\svgscale}}%
    \fi%
  \else%
    \setlength{\unitlength}{\svgwidth}%
  \fi%
  \global\let\svgwidth\undefined%
  \global\let\svgscale\undefined%
  \makeatother%
  \begin{picture}(1,0.7094533)%
    \lineheight{1}%
    \setlength\tabcolsep{0pt}%
    \put(0,0){\includegraphics[width=\unitlength,page=1]{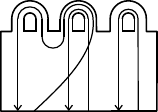}}%
  \end{picture}%
\endgroup%
}.
\end{gather*}
The action of $f:F_n\to F_n$ on a Jacobi diagram $u\in A_d(n)$ is defined to be the composition of $u$ and the corresponding diagram $Z(f)$ in the category $\A$ of Jacobi diagrams in handlebodies, which was introduced in \cite{HM}.
The composition $Z(f)\circ u$ is obtained by stacking a suitable cabling of $u$ on the top of the handlebody of $Z(f)$. (See \cite[Section 2.2]{Katada1} for details.)
For example, the action of the automorphism $f$ defined in \eqref{exampleU12} on the Jacobi diagram 
$u=\scalebox{0.7}{$\centre{
\begingroup%
  \makeatletter%
  \providecommand\color[2][]{%
    \errmessage{(Inkscape) Color is used for the text in Inkscape, but the package 'color.sty' is not loaded}%
    \renewcommand\color[2][]{}%
  }%
  \providecommand\transparent[1]{%
    \errmessage{(Inkscape) Transparency is used (non-zero) for the text in Inkscape, but the package 'transparent.sty' is not loaded}%
    \renewcommand\transparent[1]{}%
  }%
  \providecommand\rotatebox[2]{#2}%
  \newcommand*\fsize{\dimexpr\f@size pt\relax}%
  \newcommand*\lineheight[1]{\fontsize{\fsize}{#1\fsize}\selectfont}%
  \ifx\svgwidth\undefined%
    \setlength{\unitlength}{80.22484197bp}%
    \ifx\svgscale\undefined%
      \relax%
    \else%
      \setlength{\unitlength}{\unitlength * \real{\svgscale}}%
    \fi%
  \else%
    \setlength{\unitlength}{\svgwidth}%
  \fi%
  \global\let\svgwidth\undefined%
  \global\let\svgscale\undefined%
  \makeatother%
  \begin{picture}(1,0.38851671)%
    \lineheight{1}%
    \setlength\tabcolsep{0pt}%
    \put(0,0){\includegraphics[width=\unitlength,page=1]{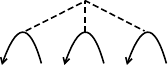}}%
  \end{picture}%
\endgroup%
}$}$
is given by
\begin{gather*}
    u\cdot f=\scalebox{1.0}{$\centre{
\begingroup%
  \makeatletter%
  \providecommand\color[2][]{%
    \errmessage{(Inkscape) Color is used for the text in Inkscape, but the package 'color.sty' is not loaded}%
    \renewcommand\color[2][]{}%
  }%
  \providecommand\transparent[1]{%
    \errmessage{(Inkscape) Transparency is used (non-zero) for the text in Inkscape, but the package 'transparent.sty' is not loaded}%
    \renewcommand\transparent[1]{}%
  }%
  \providecommand\rotatebox[2]{#2}%
  \newcommand*\fsize{\dimexpr\f@size pt\relax}%
  \newcommand*\lineheight[1]{\fontsize{\fsize}{#1\fsize}\selectfont}%
  \ifx\svgwidth\undefined%
    \setlength{\unitlength}{73.7176313bp}%
    \ifx\svgscale\undefined%
      \relax%
    \else%
      \setlength{\unitlength}{\unitlength * \real{\svgscale}}%
    \fi%
  \else%
    \setlength{\unitlength}{\svgwidth}%
  \fi%
  \global\let\svgwidth\undefined%
  \global\let\svgscale\undefined%
  \makeatother%
  \begin{picture}(1,0.42281189)%
    \lineheight{1}%
    \setlength\tabcolsep{0pt}%
    \put(0,0){\includegraphics[width=\unitlength,page=1]{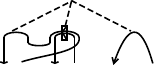}}%
  \end{picture}%
\endgroup%
}$}=\scalebox{0.7}{$\centre{}$}-\frac{1}{2}\scalebox{0.7}{$\centre{
\begingroup%
  \makeatletter%
  \providecommand\color[2][]{%
    \errmessage{(Inkscape) Color is used for the text in Inkscape, but the package 'color.sty' is not loaded}%
    \renewcommand\color[2][]{}%
  }%
  \providecommand\transparent[1]{%
    \errmessage{(Inkscape) Transparency is used (non-zero) for the text in Inkscape, but the package 'transparent.sty' is not loaded}%
    \renewcommand\transparent[1]{}%
  }%
  \providecommand\rotatebox[2]{#2}%
  \newcommand*\fsize{\dimexpr\f@size pt\relax}%
  \newcommand*\lineheight[1]{\fontsize{\fsize}{#1\fsize}\selectfont}%
  \ifx\svgwidth\undefined%
    \setlength{\unitlength}{80.22484197bp}%
    \ifx\svgscale\undefined%
      \relax%
    \else%
      \setlength{\unitlength}{\unitlength * \real{\svgscale}}%
    \fi%
  \else%
    \setlength{\unitlength}{\svgwidth}%
  \fi%
  \global\let\svgwidth\undefined%
  \global\let\svgscale\undefined%
  \makeatother%
  \begin{picture}(1,0.46256357)%
    \lineheight{1}%
    \setlength\tabcolsep{0pt}%
    \put(0,0){\includegraphics[width=\unitlength,page=1]{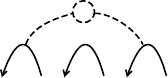}}%
  \end{picture}%
\endgroup%
}$}.
\end{gather*}

The space $A_d(n)$ of Jacobi diagrams admits a descending filtration of finite length
\begin{gather*}
    A_d(n)=A_{d,0}(n)\supset A_{d,1}(n)\supset A_{d,2}(n)\supset \dots \supset A_{d,2d-2}(n)\supset A_{d,2d-1}(n)=0,
\end{gather*}
where $A_{d,k}(n)$ is spanned by Jacobi diagrams of degree $d$ with at least $k$ trivalent vertices.
The $\Aut(F_n)$-action preserves this filtration $A_{d,*}(n)$. 

We will use the following results about the $\Aut(F_n)$-action on $A_d(n)$ (see \cite[Theorem 5.1]{Katada1} and \cite[Lemma 5.4]{Katada1}).

\begin{lemma}[\cite{Katada1}]\label{Innerautomorphism}
The inner automorphism group $\Inn(F_n)$ of $F_n$ acts trivially on $A_d(n)$ for $d,n\ge 0$.
\end{lemma}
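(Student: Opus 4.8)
The plan is to prove the statement diagrammatically, by unwinding the definition of the $\Aut(F_n)$-action on $A_d(n)$ and working with the STU relations; recall that the lemma is exactly the assertion that this action descends to an action of $\Out(F_n)$. First I would reduce to generators. For $n\le 1$ the group $\Inn(F_n)$ is trivial, so assume $n\ge 2$; then $\Inn(F_n)\cong F_n$, since the centre of $F_n$ is trivial, so $\Inn(F_n)$ is generated by the inner automorphisms $c_{x_1},\dots,c_{x_n}$, where $c_{x_i}(x)=x_i x x_i^{-1}$. Because the $\Aut(F_n)$-action is by linear maps, it suffices to show $u\cdot c_{x_i}=u$ for every Jacobi diagram $u$ and every $i$: the identity $v\cdot c_{x_i}^{-1}=v$ for all $v\in A_d(n)$ then follows by applying the established case to $v\cdot c_{x_i}^{-1}$, and an arbitrary element of $\Inn(F_n)$ is a word in the $c_{x_i}^{\pm1}$.

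Next I would unwind the definition: $u\cdot c_{x_i}=Z(c_{x_i})\circ u$ is obtained by stacking a suitable cabling of $u$ on top of the tangle $Z(c_{x_i})$, i.e.\ by cabling each arc $j$ of $u$ along the word $c_{x_i}(x_j)$. Since $c_{x_i}$ fixes $x_i$, the $i$-th arc of $u$ is left untouched. For $j\ne i$ we have $c_{x_i}(x_j)=x_i x_j x_i^{-1}$, so the $j$-th arc is split into three letter-segments, associated to $x_i$, $x_j$ and $x_i^{-1}$, and the univalent vertices of $u$ lying on the $j$-th arc are distributed over these three segments via the coproduct on legs, with a sign on the $x_i^{-1}$-segment coming from the orientation reversal. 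Geometrically, the $x_i$- and $x_i^{-1}$-segments run parallel to each other (once around the $i$-th handle) with opposite orientations, joined through the $x_j$-segment, which is a reparametrised copy of the $j$-th arc.

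The heart of the matter is to show that this cabling returns $u$. I would establish, using only the STU relations, the leg-sliding identity in $A_d(n)$ that moves a univalent vertex across such an antiparallel pair of segments, picking up the sign produced by the orientation reversal. Applying this identity repeatedly makes the sum over distributions telescope: every leg is pushed onto the $x_j$-segment, the $x_i$- and $x_i^{-1}$-segments become leg-free, and since a leg-free letter-segment contributes trivially to the cabling, what remains is just the $j$-th arc carrying the original legs of $u$, for every $j$ --- that is, $u$ itself.

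The main obstacle is to make this telescoping precise: to verify that STU genuinely forces the contributions of legs placed on an antiparallel pair of segments to cancel in pairs (the diagrammatic shadow of the antipode identity for a group-like expansion), and that the orientation reversals produce exactly the signs needed, with no surviving correction terms. Once that identity is in hand, assembling it into $u\cdot c_{x_i}=u$ is routine.
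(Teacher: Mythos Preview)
The paper does not prove this lemma at all: it is quoted verbatim from the author's earlier paper \cite{Katada1} (there it is Theorem~5.1), so there is no ``paper's own proof'' to compare against. Your task, then, is only whether your sketch is a viable self-contained argument.

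Your outline is the standard one and is essentially correct. The reduction to the generators $c_{x_i}$ is clean, and your description of the cabling is accurate. The ``leg-sliding'' identity you isolate is exactly the right tool: in $A_d(n)$, moving a univalent vertex from one of two antiparallel copies of the same arc to the other changes the sign, and this follows locally from STU (equivalently, from the 4T/infinitesimal braid relations satisfied in $\mathcal{A}$). Once you have that, the telescoping is genuine: for each arc $j\ne i$, expanding the coproduct distributes legs over the three letter-segments $x_i, x_j, x_i^{-1}$, and the contributions from the two outer segments cancel pairwise, leaving only the term with all legs on the $x_j$-segment.

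One small point to tighten: the cancellation is cleanest if you argue inductively on the number of legs placed on the $x_i$- and $x_i^{-1}$-segments, sliding the \emph{outermost} leg first (the one closest to the free end of the antiparallel pair). Sliding an interior leg across the pair can, via STU, produce correction terms with a new trivalent vertex attached to the other segment; these also cancel, but the bookkeeping is messier. Phrasing it as ``slide the outermost leg off the antiparallel pair, then recurse'' makes the telescoping transparent and avoids the worry you flag in your last paragraph. With that adjustment the argument goes through.
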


\begin{lemma}[\cite{Katada1}]\label{bracketmap}
The $\Aut(F_n)$-action induces the bracket map 
$$[,]: A_{d,k}(n)\times \IA_n\to A_{d,k+1}(n)$$
defined by $[u,f]=u\cdot f-u$ for $u\in A_{d,k}(n)$ and $f\in \IA_n$.    
\end{lemma}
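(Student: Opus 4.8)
\emph{Proof proposal.} The plan is to reduce the statement to a finite generating set of $\IA_n$ and then to read it off from the cabling description of the $\Aut(F_n)$-action, one generator at a time.

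First I would set up the reduction. For $u\in A_{d,k}(n)$ and $f,g\in\IA_n$, unwinding the definition $[u,f]=u\cdot f-u$ gives the twisted Leibniz identities $[u,fg]=[u,f]\cdot g+[u,g]$ and $[u,f^{-1}]=-[u,f]\cdot f^{-1}$. Since the $\Aut(F_n)$-action preserves the filtration $A_{d,*}(n)$, these identities show that the set of $f\in\IA_n$ for which $[u,f]\in A_{d,k+1}(n)$ holds for all $d,k$ and all $u\in A_{d,k}(n)$ is a subgroup of $\IA_n$. It therefore suffices to treat the case where $f$ runs over Magnus's generating set of $\IA_n$: the conjugations $K_{ab}\colon x_a\mapsto x_bx_ax_b^{-1}$ and the commutator transvections $K_{abc}\colon x_a\mapsto x_a[x_b,x_c]$ (all other generators fixed). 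For $n\le 2$ these are all inner, hence act trivially by Lemma~\ref{Innerautomorphism}, so I would assume $n\ge 3$.

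Next I would analyze a single Magnus generator $f$. Only the $a$-th arc of $u$ is affected; it is replaced by the cable read off from the reduced word $w=f(x_a)$ --- a string of parallel strands, one per letter of $w$, with the labels and orientations dictated by $w$ --- over which the univalent vertices of $u$ lying on the old $a$-th arc are distributed, $u\cdot f$ being the signed sum over all such distributions, normalized back to a diagram on $n$ arcs. Since $f\in\IA_n$, the word $w$ abelianizes to $x_a$, so there is a \emph{unique} strand of the cable labelled $a$; the distribution that places every univalent vertex of $u$ on that strand leaves the remaining ``conjugating'' strands free of univalent vertices, so these can be slid off and removed (the pair $x_b,x_b^{-1}$ for $K_{ab}$, the null-homotopic commutator loop $[x_b,x_c]$ for $K_{abc}$). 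This distribution thus contributes exactly the original $u$.

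Finally I would show that every other distribution contributes an element of $A_{d,k+1}(n)$. In such a term, at least one univalent vertex of $u$ has been pushed onto a conjugating strand, which obstructs the free removal of those strands; bringing the configuration back to the standard form on $n$ arcs then forces the creation of at least one new trivalent vertex --- intuitively because a conjugating strand carrying a leg is, via the STU relation, not a mere relabelling of that leg onto another arc but a correction term with an extra trivalent vertex. Hence every such term has at least $k+1$ trivalent vertices, and together with the previous step this gives $[u,f]=u\cdot f-u\in A_{d,k+1}(n)$. The hard part is exactly this last step: one must pin down a normal form for the cabled diagram and a definite order of STU-resolutions and check that in every nontrivial distribution a genuinely new trivalent vertex is unavoidable. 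An alternative but essentially equivalent route would be to identify $\bigoplus_k A_{d,k}(n)/A_{d,k+1}(n)$ with an explicit $\GL(n,\Z)$-module on which $\IA_n$ visibly acts trivially, from which the lemma is immediate.
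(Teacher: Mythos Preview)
The paper does not prove this lemma itself; it is quoted from \cite[Lemma~5.4]{Katada1}. Your alternative route---showing that the $\Aut(F_n)$-action on each graded piece $A_{d,k}(n)/A_{d,k+1}(n)$ factors through $\GL(n,\Z)$ (via the identification with open Jacobi diagrams whose legs are colored by elements of $H^*$)---is precisely the standard argument, and once that is in place the lemma is immediate since $\IA_n=\ker(\Aut(F_n)\to\GL(n,\Z))$.

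Your primary route is correct through the reduction to Magnus generators, but the diagrammatic analysis has a gap beyond the one you flag. It is not true that \emph{each} non-main distribution individually lies in $A_{d,k+1}(n)$: for instance, take $u$ a chord diagram ($k=0$) with all legs on arc~$a$ and apply $K_{ab}$; moving a single leg to the $x_b$-strand yields another chord diagram, still with no trivalent vertices. What is actually true is that the contributions from the $x_b$- and $x_b^{-1}$-strands come with opposite antipode signs and cancel \emph{up to} commuting the transplanted leg past whatever legs are already present on arc~$b$; it is those commutators that are the STU corrections carrying an extra trivalent vertex. So only the \emph{sum} of the non-main distributions lands in $A_{d,k+1}(n)$, and making this cancellation precise is exactly the statement that the graded action factors through $\GL(n,\Z)$---your primary route ends up collapsing into the alternative one. (A minor slip along the way: $[x_b,x_c]$ is null-\emph{homologous} in the handlebody, not null-homotopic; the main term recovers $u$ because the vertex-free inverse pairs $(x_b,x_b^{-1})$ and $(x_c,x_c^{-1})$ collapse separately, not because the whole commutator loop contracts.)
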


\subsection{The $\Aut(F_n)$-module $A_2(n)$}

Here we briefly recall the $\Aut(F_n)$-module structure of $A_2(n)$. 
See \cite{Katada1} for details.
Let $A'_2(n)$ and $A''_2(n)$ be the $\Aut(F_n)$-submodules of $A_2(n)$ generated by the elements $P'$ and $P''$, respectively, described below:
\begin{gather*}
       P'=  \scalebox{1.0}{$\centre{
\begingroup%
  \makeatletter%
  \providecommand\color[2][]{%
    \errmessage{(Inkscape) Color is used for the text in Inkscape, but the package 'color.sty' is not loaded}%
    \renewcommand\color[2][]{}%
  }%
  \providecommand\transparent[1]{%
    \errmessage{(Inkscape) Transparency is used (non-zero) for the text in Inkscape, but the package 'transparent.sty' is not loaded}%
    \renewcommand\transparent[1]{}%
  }%
  \providecommand\rotatebox[2]{#2}%
  \newcommand*\fsize{\dimexpr\f@size pt\relax}%
  \newcommand*\lineheight[1]{\fontsize{\fsize}{#1\fsize}\selectfont}%
  \ifx\svgwidth\undefined%
    \setlength{\unitlength}{38.25117926bp}%
    \ifx\svgscale\undefined%
      \relax%
    \else%
      \setlength{\unitlength}{\unitlength * \real{\svgscale}}%
    \fi%
  \else%
    \setlength{\unitlength}{\svgwidth}%
  \fi%
  \global\let\svgwidth\undefined%
  \global\let\svgscale\undefined%
  \makeatother%
  \begin{picture}(1,0.81065931)%
    \lineheight{1}%
    \setlength\tabcolsep{0pt}%
    \put(0,0){\includegraphics[width=\unitlength,page=1]{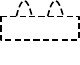}}%
    \put(0.18962287,0.4102936){\makebox(0,0)[lt]{\lineheight{1.45000005}\smash{\begin{tabular}[t]{l}$\sym_{4}$\end{tabular}}}}%
    \put(0,0){\includegraphics[width=\unitlength,page=2]{P20.pdf}}%
  \end{picture}%
\endgroup%
}$},\quad
       P''= \scalebox{1.0}{$\centre{
\begingroup%
  \makeatletter%
  \providecommand\color[2][]{%
    \errmessage{(Inkscape) Color is used for the text in Inkscape, but the package 'color.sty' is not loaded}%
    \renewcommand\color[2][]{}%
  }%
  \providecommand\transparent[1]{%
    \errmessage{(Inkscape) Transparency is used (non-zero) for the text in Inkscape, but the package 'transparent.sty' is not loaded}%
    \renewcommand\transparent[1]{}%
  }%
  \providecommand\rotatebox[2]{#2}%
  \newcommand*\fsize{\dimexpr\f@size pt\relax}%
  \newcommand*\lineheight[1]{\fontsize{\fsize}{#1\fsize}\selectfont}%
  \ifx\svgwidth\undefined%
    \setlength{\unitlength}{38.23996689bp}%
    \ifx\svgscale\undefined%
      \relax%
    \else%
      \setlength{\unitlength}{\unitlength * \real{\svgscale}}%
    \fi%
  \else%
    \setlength{\unitlength}{\svgwidth}%
  \fi%
  \global\let\svgwidth\undefined%
  \global\let\svgscale\undefined%
  \makeatother%
  \begin{picture}(1,0.81249545)%
    \lineheight{1}%
    \setlength\tabcolsep{0pt}%
    \put(0,0){\includegraphics[width=\unitlength,page=1]{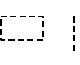}}%
    \put(0.05597441,0.37984023){\makebox(0,0)[lt]{\lineheight{1.45000005}\smash{\begin{tabular}[t]{l}$\alt_2$\end{tabular}}}}%
    \put(0,0){\includegraphics[width=\unitlength,page=2]{Q20.pdf}}%
  \end{picture}%
\endgroup%
}$}\in A_2(4),
\end{gather*}
where 
$$\sym_4=\sum_{\sigma\in \gpS_4}\sigma,\quad \alt_2=\sum_{\tau\in \gpS_2}\sgn(\tau)\tau.$$

\begin{theorem}[\cite{Katada1}]
For $n\ge 3$, we have
\begin{gather*}
    A_2(n)=A'_2(n)\oplus A''_2(n),
\end{gather*}
where $A'_2(n)\cong V_{0,4}$ is an irreducible $\Aut(F_n)$-module, 
and where $A''_2(n)$ is an indecomposable $\Aut(F_n)$-module with the unique composition series
\begin{gather*}
    A''_2(n)\supsetneq A_{2,1}(n)\supsetneq A_{2,2}(n)\supsetneq 0,
\end{gather*}
whose composition factors are
\begin{gather*}
        A''_2(n)/A_{2,1}(n)\cong V_{0,2^2},\quad
        A_{2,1}(n)/A_{2,2}(n)\cong V_{0,1^3},\quad
        A_{2,2}(n)\cong V_{0,2}.
\end{gather*}
\end{theorem}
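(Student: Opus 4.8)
The plan is to analyze $A_2(n)$ through its trivalent-vertex filtration $A_2(n)=A_{2,0}(n)\supseteq A_{2,1}(n)\supseteq A_{2,2}(n)\supseteq 0$. By Lemma~\ref{bracketmap}, $\IA_n$ acts trivially on each quotient $\gr_k:=A_{2,k}(n)/A_{2,k+1}(n)$, so the $\gr_k$ are $\GL(n,\Z)$-modules; since each $A_d(n)$ is assembled functorially from copies of $H^*$ with bounded multiplicity, they are algebraic and hence semisimple. The first task is to identify them. A degree-$2$ Jacobi diagram has four vertices, so $\gr_0$ is spanned by pairs of chords, and modulo the $4T$-relations forced by STU one finds $\gr_0\cong\Sym^2(\Sym^2 H^*)\cong V_{0,4}\oplus V_{0,2^2}$. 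The piece $\gr_1$ is spanned by the ``$Y$-diagrams'' with a single trivalent vertex and three legs, and the antisymmetry relation at the trivalent vertex (a consequence of STU) identifies this span with $\wedge^3 H^*=V_{0,1^3}$. Finally $\gr_2=A_{2,2}(n)$ is spanned by the diagrams with two trivalent vertices joined by a double edge, each carrying one leg on a strand; this span is symmetric in the two strand-labels and equals $\Sym^2 H^*=V_{0,2}$. This pins down the four composition factors $V_{0,4}$, $V_{0,2^2}$, $V_{0,1^3}$, $V_{0,2}$.

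Next I would split off $A'_2(n)$. Since the $\Aut(F_n)$-action on $\gr_0$ factors through $\GL(n,\Z)$ and $\gr_0\cong V_{0,4}\oplus V_{0,2^2}$ is multiplicity-free, the preimage under $A_2(n)\twoheadrightarrow\gr_0$ of the $V_{0,2^2}$-summand is an $\Aut(F_n)$-submodule $A''_2(n)$ containing $A_{2,1}(n)$. The full symmetrization $\sym_4$ in the definition of $P'$ places its image in $\gr_0$ inside the summand $V_{0,4}=\Sym^4 H^*$, and one checks this image is nonzero, so the submodule $A'_2(n):=\langle P'\rangle$ generated by $P'$ surjects onto the irreducible $V_{0,4}$. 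A direct computation on the standard (Magnus) generators of $\IA_n$ shows $P'\cdot f=P'$ for all $f\in\IA_n$; since $\IA_n$ is normal in $\Aut(F_n)$ this forces $\IA_n$ to act trivially on all of $A'_2(n)$, so $A'_2(n)$ is a semisimple algebraic $\GL(n,\Z)$-module, and comparing it with the graded pieces shows $A'_2(n)\cong V_{0,4}$. In particular $A'_2(n)\cap A_{2,1}(n)=0$, and since the image of $A'_2(n)$ in $\gr_0$ is the summand $V_{0,4}$, a short diagram chase yields $A_2(n)=A'_2(n)\oplus A''_2(n)$; the filtration of $A''_2(n)$ then has successive quotients $A''_2(n)/A_{2,1}(n)\cong V_{0,2^2}$, $A_{2,1}(n)/A_{2,2}(n)\cong V_{0,1^3}$ and $A_{2,2}(n)\cong V_{0,2}$.

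It remains to show $A''_2(n)$ is indecomposable with a unique composition series. Because its three factors are pairwise non-isomorphic, it suffices to prove that the socle series of $A''_2(n)$ is $A_{2,2}(n)\subset A_{2,1}(n)\subset A''_2(n)$, that is, that $A''_2(n)$ has no submodule isomorphic to $V_{0,2^2}$ or $V_{0,1^3}$ and that $A''_2(n)/A_{2,2}(n)$ has no submodule isomorphic to $V_{0,2^2}$. Via the bracket map of Lemma~\ref{bracketmap} this reduces to two concrete computations: first, exhibiting $f\in\IA_n$ with $[P'',f]\notin A_{2,2}(n)$, which rules out a $V_{0,2^2}$-direct-summand of $A''_2(n)$ and of $A''_2(n)/A_{2,2}(n)$ (a splitting would force $[P'',f]\in A_{2,2}(n)$ for every $f$); and second, exhibiting $g\in\IA_n$ with $[[P'',f],g]\neq 0$ in $A_{2,2}(n)$, which rules out a $V_{0,1^3}$-direct-summand of $A_{2,1}(n)$ by the analogous argument applied to the element $[P'',f]\in A_{2,1}(n)$. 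These same two nonzero brackets, together with the irreducibility of $A_{2,1}(n)/A_{2,2}(n)$ and of $A_{2,2}(n)$ and a composition-length count, show $\langle P''\rangle_{\Aut(F_n)}=A''_2(n)$. Hence $A''_2(n)$ is uniserial, so indecomposable with the stated unique composition series.

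The main obstacle is the package of hands-on computations behind these structural steps. Identifying the graded pieces requires checking that the STU relations collapse the a priori larger spaces of chord diagrams and of $Y$-diagrams exactly onto $\Sym^2(\Sym^2 H^*)$ and $\wedge^3 H^*$, with nothing extra surviving — a genuine, if finite, combinatorial verification. And the triviality of the $\IA_n$-action on $P'$, together with the two nonzero brackets emanating from $P''$, require evaluating the $\Aut(F_n)$-action of suitably chosen $\IA_n$-elements on specific small Jacobi diagrams via the cabling-and-stacking description of the action. Everything else is formal bookkeeping with semisimple $\GL(n,\Q)$-representations.
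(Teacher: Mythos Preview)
This theorem is not proved in the present paper; it is quoted from the author's earlier work \cite{Katada1} (note the bracketed citation in the theorem header), and no argument is supplied here. There is therefore no in-paper proof to compare your proposal against.

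On its own merits your outline is the natural strategy and is essentially correct: identify the graded pieces of the trivalent-vertex filtration as $\Sym^2(\Sym^2 H^*)\cong V_{0,4}\oplus V_{0,2^2}$, $\wedge^3 H^*=V_{0,1^3}$, and $\Sym^2 H^*=V_{0,2}$; split off $A'_2(n)$ by verifying that $\IA_n$ fixes the fully symmetrized element $P'$; and establish uniseriality of $A''_2(n)$ via two explicit nonzero brackets descending the filtration. Two points deserve more care. First, the relation STU imposes on chord diagrams in $\gr_0$ is not the 4T relation but simply ``adjacent feet on the same arc commute'': the $S$-term of STU lies in $A_{2,1}$, so in the quotient $T=U$, and this is exactly what collapses $\gr_0$ to $\Sym^2(\Sym^2 H^*)$. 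Second, the inference ``$\IA_n$ acts trivially on $A'_2(n)$, hence $A'_2(n)$ is a semisimple algebraic $\GL(n,\Z)$-module, hence $A'_2(n)\cong V_{0,4}$'' is not automatic. A $\GL(n,\Z)$-module whose associated graded is algebraic need not itself be algebraic---this is precisely the open Conjecture~\ref{conjalgebraic} in the present paper---and even granting semisimplicity, a cyclic generator whose image in $\gr_0$ spans $V_{0,4}$ could in principle also have a nonzero component in $A_{2,1}$, making $A'_2(n)$ strictly larger. To close this gap you must either compute the $\GL(n,\Z)$-span of $P'$ directly and check its dimension equals $\dim V_{0,4}$, or invoke the PBW-type identification of $A_d(n)$ with the space of $H^*$-colored open Jacobi diagrams to see that the relevant $\GL(n,\Z)$-structure is the restriction of a polynomial $\GL(n,\Q)$-representation. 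Your phrase ``assembled functorially from copies of $H^*$'' is pointing at the latter, but it should be made precise.
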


\section{Preliminaries to compute $H_1(\IA_n,A_2(n))$}\label{secPreliminariesofhomology}

Let $M$ be a right $\IA_n$-module.
Then the homology $H_*(\IA_n,M)$ of $\IA_n$ with coefficients in $M$ has a $\GL(n,\Z)$-module structure.
(See \cite{Brown} for fundamental facts on homology of groups.)
If the $\IA_n$-action on $M$ is trivial, then we have the following isomorphism of $\GL(n,\Z)$-representations
\begin{gather*}
    H_*(\IA_n,M)\cong M\otimes H_*(\IA_n,\Q).
\end{gather*}

\subsection{The first rational homology of $\IA_n$}

By Cohen--Pakianathan \cite{CP}, Farb \cite{Farb} and Kawazumi \cite{Kawazumi}, we have the following isomorphism of $\GL(n,\Z)$-representations
\begin{gather*}
    H_1(\IA_n,\Z)\cong \Hom(H_{\Z},{\bigwedge}^2 H_{\Z})\cong H^*_{\Z}\otimes{\bigwedge}^2 H_{\Z}, \quad H_{\Z}=H_1(F_n,\Z).
\end{gather*}
By tensoring with $\Q$, we obtain the $\GL(n,\Z)$-module structure of the first rational homology of $\IA_n$
\begin{gather*}
    H_1(\IA_n,\Q)\cong \Hom(H,{\bigwedge}^2 H)\cong H^*\otimes{\bigwedge}^2 H.
\end{gather*}

\subsection{Irreducible decompositions of the first homology of $\IA_n$ with coefficients in trivial $\IA_n$-modules}\label{irdecompH1}

Note that $\IA_n$ acts trivially on the spaces $A'_2(n)$, $A''_2(n)/A_{2,1}(n)$, $A_{2,1}(n)/A_{2,2}(n)$ and $A_{2,2}(n)$.
Therefore, by using the irreducible decomposition formula \eqref{Koike}, we can easily compute decompositions of the first homology of $\IA_n$ with coefficients in them into irreducible algebraic $\GL(n,\Z)$-representations as follows:
\begin{gather}\label{decompH1A2'}
\begin{gathered}
       H_1(\IA_n,A'_2(n))\cong A'_2(n)\otimes H_1(\IA_n,\Q)
       \cong V_{0,4}\otimes (H^*\otimes {\bigwedge}^2 H)\\
       \cong 
       \begin{cases}
        V_{1^2,41}\oplus V_{1^2,5}\oplus V_{1,4}^{\oplus 2}\oplus V_{1,31}\oplus V_{0,3} & n\ge 4\\
        V_{1^2,5}\oplus V_{1,4}^{\oplus 2}\oplus V_{1,31}\oplus V_{0,3} & n=3,
       \end{cases}
\end{gathered}
\end{gather}
\begin{gather}\label{decompH1A''2/A21}
\begin{gathered}
    H_1(\IA_n, A''_2(n)/A_{2,1}(n))
    \cong A''_2(n)/A_{2,1}(n)\otimes H_1(\IA_n,\Q)
    \cong V_{0,2^2}\otimes(H^*\otimes {\bigwedge}^2 H)\\
    \quad\cong 
    \begin{cases}
        V_{1^2,2^21}\oplus V_{1^2,32}\oplus V_{1,21^2}\oplus V_{1,2^2}^{\oplus 2} \oplus V_{1,31}\oplus V_{0,1^3}\oplus V_{0,21}^{\oplus 2} & n\ge 5\\
        V_{1^2,32}\oplus V_{1,21^2}\oplus V_{1,2^2}^{\oplus 2} \oplus V_{1,31}\oplus V_{0,1^3}\oplus V_{0,21}^{\oplus 2} & n= 4\\
        V_{1,2^2} \oplus V_{1,31}\oplus V_{0,1^3}\oplus V_{0,21}^{\oplus 2} & n=3,
    \end{cases}
\end{gathered}
\end{gather}
\begin{gather}\label{decompH1A21/A22}
\begin{gathered}
      H_1(\IA_n, A_{2,1}(n)/A_{2,2}(n))
      \cong A_{2,1}(n)/A_{2,2}(n)\otimes H_1(\IA_n, \Q)
      \cong V_{0,1^3}\otimes (H^*\otimes {\bigwedge}^2 H)\\
      \quad\cong 
      \begin{cases}
          V_{1^2,1^4}\oplus V_{1^2,21^2}\oplus V_{1,1^3}^{\oplus 2}\oplus V_{1,21}\oplus V_{0,1^2}^{\oplus 2}\oplus V_{0,2} & n\ge 6\\
          V_{1^2,21^2}\oplus V_{1,1^3}^{\oplus 2}\oplus V_{1,21}\oplus V_{0,1^2}^{\oplus 2}\oplus V_{0,2} & n=5\\
          V_{1,1^3}\oplus V_{1,21}\oplus V_{0,1^2}^{\oplus 2}\oplus V_{0,2} & n=4\\
          V_{0,1^2}\oplus V_{0,2} & n=3,
      \end{cases}    
\end{gathered}
\end{gather}
\begin{gather}\label{decompH1A22}
 \begin{gathered}
        H_1(\IA_n,A_{2,2}(n))\cong A_{2,2}(n) \otimes H_1(\IA_n,\Q)
        \cong V_{0,2}\otimes (H^*\otimes {\bigwedge}^2 H)\\
        \cong 
        \begin{cases}
            V_{1^2,21}\oplus V_{1^2,3}\oplus V_{1,1^2}\oplus V_{1,2}^{\oplus 2}\oplus V_{0,1} & n\ge 4\\
            V_{1^2,3}\oplus V_{1,1^2}\oplus V_{1,2}^{\oplus 2}\oplus V_{0,1} & n=3.
        \end{cases}     
 \end{gathered}
\end{gather}

\subsection{Contraction maps}\label{subseccontraction}

Here we will define several \emph{contraction maps}, which detect irreducible components of $H_1(\IA_n,A_{2,1}(n)/A_{2,2}(n))$ and $H_1(\IA_n,A_{2,2}(n))$.

Define $\GL(n,\Z)$-homomorphisms
\begin{gather*}
\iota:{\bigwedge}^3 H^*\otimes H^*\otimes {\bigwedge}^2 H\hookrightarrow (H^*)^{\otimes 4}\otimes H^{\otimes 2},\quad 
\iota':{\Sym}^2 H^*\otimes H^*\otimes {\bigwedge}^2 H\hookrightarrow (H^*)^{\otimes 3}\otimes H^{\otimes 2}
\end{gather*}
by
\begin{gather*}
\begin{split}
      \iota((a_1^*\wedge a_2^*\wedge a_3^*)\otimes a^*\otimes (b_1\wedge b_2))&=\iota^{(1^3)}(a_1^*\wedge a_2^*\wedge a_3^*)\otimes a^*\otimes \iota^{(1^2)}(b_1\wedge b_2),\\
      \iota'((a_1^*\cdot a_2^*)\otimes a^*\otimes (b_1\wedge b_2))&=\iota^{(2)}(a_1^*\cdot a_2^*)\otimes a^*\otimes \iota^{(1^2)}(b_1\wedge b_2),
\end{split}
\end{gather*}
for $a_1,a_2,a_3,a,b_1,b_2\in H$, where $\iota^{(1^2)}$, $\iota^{(2)}$ $\iota^{(1^3)}$ are the canonical injective $\GL(n,\Z)$-homomorphisms defined by
\begin{gather*}
    \begin{split}
    &\iota^{(1^2)}: {\bigwedge}^2 H\hookrightarrow H^{\otimes 2},\quad
    b_1\wedge b_2\mapsto \frac{1}{2}(b_1\otimes b_2-b_2\otimes b_1),\\
    &\iota^{(2)}: {\Sym}^2 H^*\hookrightarrow (H^*)^{\otimes 2},\quad
    a_1^*\cdot a_2^*\mapsto \frac{1}{2}(a_1^*\otimes a_2^*+a_2^*\otimes a_1^*),\\
    &\iota^{(1^3)}: {\bigwedge}^3 H^*\hookrightarrow (H^*)^{\otimes 3},\quad
    a_1^*\wedge a_2^* \wedge a_3^*\mapsto 
    \frac{1}{6}\sum_{\sigma\in \gpS_3}\sgn(\sigma)a_{\sigma(1)}^*\otimes a_{\sigma(2)}^*\otimes a_{\sigma(3)}^*
    \end{split}
\end{gather*}
for $a_1,a_2,a_3, b_1,b_2\in H$.

In order to detect irreducible components of $H_1(\IA_n,A_{2,1}(n)/A_{2,2}(n))$, we will use the following $7$ contraction maps $c_{1^2,1^4}, c_{1^2,21^2}, c_{1,1^3}, c_{1,1^3}', c_{1,21}, c_{0,1^2}, c_{0,1^2}'$ defined by
\begin{gather*}
    \begin{split}
        c_{1^2,1^4}&: (H^*)^{\otimes 4}\otimes H^{\otimes 2}\to {\bigwedge}^4 H^*\otimes{\bigwedge}^2 H,\\
        &a_1^*\otimes a_2^*\otimes a_3^*\otimes a_4^*\otimes b_1\otimes b_2\mapsto (a_1^*\wedge a_2^*\wedge a_3^*\wedge a_4^*)\otimes (b_1\wedge b_2),\\
        c_{1^2,21^2}&: (H^*)^{\otimes 4}\otimes H^{\otimes 2}\to {\bigwedge}^3 H^*\otimes H^* \otimes{\bigwedge}^2 H,\\
        &a_1^*\otimes a_2^*\otimes a_3^*\otimes a_4^*\otimes b_1\otimes b_2\mapsto (a_1^*\wedge a_2^*\wedge a_3^*)\otimes a_4^*\otimes (b_1\wedge b_2),\\
         c_{1,1^3}&: (H^*)^{\otimes 4}\otimes H^{\otimes 2}\to {\bigwedge}^3 H^*\otimes H,\\
        &a_1^*\otimes a_2^*\otimes a_3^*\otimes a_4^*\otimes b_1\otimes b_2\mapsto a_4^*(b_2)(a_1^*\wedge a_2^*\wedge a_3^*)\otimes b_1,\\
        c_{1,1^3}'&: (H^*)^{\otimes 4}\otimes H^{\otimes 2}\to {\bigwedge}^3 H^*\otimes H,\\
        &a_1^*\otimes a_2^*\otimes a_3^*\otimes a_4^*\otimes b_1\otimes b_2\mapsto a_1^*(b_2)(a_2^*\wedge a_3^*\wedge a_4^*)\otimes b_1,\\
        c_{1,21}&: (H^*)^{\otimes 4}\otimes H^{\otimes 2}\to {\bigwedge}^2 H^*\otimes H^*\otimes H,\\
        &a_1^*\otimes a_2^*\otimes a_3^*\otimes a_4^*\otimes b_1\otimes b_2\mapsto a_1^*(b_2)(a_2^*\wedge a_3^*)\otimes a_4^*\otimes b_1,\\
        c_{0,1^2}&: (H^*)^{\otimes 4}\otimes H^{\otimes 2}\to {\bigwedge}^2 H^*,\\
        &a_1^*\otimes a_2^*\otimes a_3^*\otimes a_4^*\otimes b_1\otimes b_2\mapsto a_4^*(b_2)a_1^*(b_1)(a_2^*\wedge a_3^*),\\
        c_{0,1^2}'&: (H^*)^{\otimes 4}\otimes H^{\otimes 2}\to {\bigwedge}^2 H^*,\\
        &a_1^*\otimes a_2^*\otimes a_3^*\otimes a_4^*\otimes b_1\otimes b_2\mapsto a_1^*(b_1)a_2^*(b_2)(a_3^*\wedge a_4^*)
    \end{split}
\end{gather*}
for $a_1,a_2,a_3,a_4,b_1,b_2\in H$.

In a similar way, to detect irreducible components of $H_1(\IA_n,A_{2,2}(n))$, we will use the following $6$ contraction maps $c_{1^2,21}, c_{1^2,3}, c_{1,1^2}, c_{1,2}, c'_{1,2}, c_{0,1}$ defined by
\begin{gather*}
    \begin{split}
        c_{1^2,21}&: (H^*)^{\otimes 3}\otimes H^{\otimes 2}\to H^*\otimes {\bigwedge}^2 H^*\otimes{\bigwedge}^2 H,\\
        &a_1^*\otimes a_2^*\otimes a_3^*\otimes b_1\otimes b_2\mapsto a_1^*\otimes (a_2^*\wedge a_3^*)\otimes (b_1\wedge b_2),\\
        c_{1^2,3}&: (H^*)^{\otimes 3}\otimes H^{\otimes 2}\to  (H^*)^{\otimes 3}\otimes{\bigwedge}^2 H,\\
        &a_1^*\otimes a_2^*\otimes a_3^*\otimes b_1\otimes b_2\mapsto a_1^*\otimes a_2^*\otimes a_3^*\otimes (b_1\wedge b_2),\\
        c_{1,1^2}&: (H^*)^{\otimes 3}\otimes H^{\otimes 2}\to {\bigwedge}^2 H^*\otimes H,\\
        &a_1^*\otimes a_2^*\otimes a_3^*\otimes b_1\otimes b_2\mapsto a_2^*(b_1) (a_1^*\wedge a_3^*)\otimes b_2,\\
        c_{1,2}&: (H^*)^{\otimes 3}\otimes H^{\otimes 2}\to (H^*)^{\otimes 2}\otimes H,\\
        &a_1^*\otimes a_2^*\otimes a_3^*\otimes b_1\otimes b_2\mapsto a_2^*(b_1) (a_1^*\otimes a_3^*)\otimes b_2,\\
        c'_{1,2}&: (H^*)^{\otimes 3}\otimes H^{\otimes 2}\to (H^*)^{\otimes 2}\otimes H,\\
        &a_1^*\otimes a_2^*\otimes a_3^*\otimes b_1\otimes b_2\mapsto a_3^*(b_2) (a_1^*\otimes a_2^*)\otimes b_1,\\
        c_{0,1}&: (H^*)^{\otimes 3}\otimes H^{\otimes 2}\to H^*,\\
        &a_1^*\otimes a_2^*\otimes a_3^*\otimes b_1\otimes b_2\mapsto a_2^*(b_1)a_3^*(b_2) a_1^*,\\
    \end{split}
\end{gather*}
for $a_1,a_2,a_3,b_1,b_2\in H$.

\subsection{Abelian $2$-cycles of $\IA_n$}\label{subsecAbeliancycle}

Here, we recall the definition of abelian cycles of groups, and two types of abelian cycles of $\IA_n$ that appeared in \cite[Section 3]{KatadaIA}.

For $i\ge 1$, let $\{\phi_1,\dots, \phi_i\}$ be an $i$-tuple of mutually commuting elements of $\IA_n$.
Then we have a group homomorphism $\phi: \Z^i\to \IA_n$ sending the element $z_j$ of a $\Z$-basis for $\Z^i$ to $\phi_j$ for $j\in\{1,\dots,i\}$.
The group homomorphism $\phi$ induces a group homomorphism $\phi_*: H_i(\Z^i,\Q)\to H_i(\IA_n,\Q)$.
The \emph{abelian cycle} corresponding to the $i$-tuple $\{\phi_1,\dots,\phi_i\}$ is defined to be the image of the fundamental class in $H_i(\Z^i,\Q)$ under $\phi_*$.

Magnus \cite{Magnus} discovered the following finite set of generators of $\IA_n$:
\begin{gather*}
    \{g_{a,b}\mid 1\le a,b\le n, a\neq b\}\cup\{f_{a,b,c}\mid 1\le a,b,c\le n, a<b, a\neq c\neq b\},
\end{gather*}
where $g_{a,b}$ and $f_{a,b,c}$ are defined by
\begin{alignat*}{2}
    g_{a,b}(x_b)&= x_a x_b x_a^{-1},&\quad g_{a,b}(x_d)&= x_d\quad (d\neq b),\\
    f_{a,b,c}(x_c)&=x_c[x_a,x_b],& \quad f_{a,b,c}(x_d)&=x_d\quad (d\neq c).
\end{alignat*}

If $n\ge 3$, then we have a pair of mutually commuting elements $g_{2,1}$ and $g_{3,1}g_{3,2}$.
Let $\alpha_{0,2}$ denote the corresponding abelian cycle, that is, $$\alpha_{0,2}=[g_{2,1}\otimes g_{3,1}g_{3,2}-g_{3,1}g_{3,2}\otimes g_{2,1}]\in H_2(\IA_n,\Q).$$

If $n\ge 4$, then we have a pair of mutually commuting elements $g_{2,1}$ and $g_{4,3}$. 
Let $\alpha_{0,1^2}$ denote the corresponding abelian cycle, that is, $$\alpha_{0,1^2}=[g_{2,1}\otimes g_{4,3}-g_{4,3}\otimes g_{2,1}]\in H_2(\IA_n,\Q).$$

\subsection{Notations}

In the following sections, we will use the following matrices in order to study the $\GL(n,\Z)$-module structure.

Let $\id\in \GL(n,\Z)$ denote the identity matrix.
For distinct elements $k,l\in \{1,\dots, n\}$, let $E_{k,l}\in \GL(n,\Z)$ denote the matrix that maps $e_l$ to $e_l+e_k$ and fixes $e_a$ for $a\neq l$.
Note that $E_{k,l}$ acts on $H^*$ by sending $e_k^*$ to $e_k^*-e_l^*$ and fixing $e_a^*$ for $a\neq k$.
Let $P_{k,l}$ denote the permutation matrix that exchanges $e_k$ and $e_l$, and fixes $e_a$ for $a\neq k,l$.

\section{The first homology of $\IA_n$ with coefficients in $A''_2(n)/A_{2,2}(n)$}\label{secH1A2''/A22}

In this section, we will study the homology $H_1(\IA_n,A''_2(n)/A_{2,2}(n))$.
We will determine the $\GL(n,\Z)$-module structure by using the long exact sequences associated to short exact sequences of $\Aut(F_n)$-modules.

\subsection{Two long exact sequences of homology}

The short exact sequence
\begin{gather*}
    0\to A_{2,2}(n)\to A''_2(n)\to A''_2(n)/A_{2,2}(n)\to 0
\end{gather*}
induces the long exact sequence
\begin{gather}\label{H1A2''}
\begin{split}
    \dots \to H_1(\IA_n, A''_2(n))&\to 
    H_1(\IA_n, A''_2(n)/A_{2,2}(n))
    \to
    H_0(\IA_n, A_{2,2}(n))\\
    &\to  
    H_0(\IA_n, A''_2(n))\to 
    H_0(\IA_n, A''_2(n)/A_{2,2}(n))\to 0.
\end{split}
\end{gather}
Since we have
\begin{gather*}
\begin{split}
    H_0(\IA_n, A_{2,2}(n))&=(A_{2,2}(n))_{\IA_n}\cong A_{2,2}(n)\cong V_{0,2},\\
    H_0(\IA_n, A''_2(n))&=(A''_2(n))_{\IA_n}\cong A''_2(n)/A_{2,1}(n),\\
    H_0(\IA_n, A''_2(n)/A_{2,2}(n))&=(A''_2(n)/A_{2,2}(n))_{\IA_n}\cong A''_2(n)/A_{2,1}(n),
\end{split}
\end{gather*}
the long exact sequence \eqref{H1A2''} yields the exact sequence
\begin{gather}\label{H1A2''new}
    \dots \to H_1(\IA_n, A''_2(n))\to H_1(\IA_n, A''_2(n)/A_{2,2}(n))
    \to V_{0,2}\to 0.
\end{gather}

On the other hand, the short exact sequence
\begin{gather*}
    0\to A_{2,1}(n)/A_{2,2}(n)\to A''_2(n)/A_{2,2}(n)\to A''_2(n)/A_{2,1}(n)\to 0
\end{gather*}
induces the long exact sequence
\begin{gather}\label{H1A2''/A22}
\begin{split}
    &\quad\quad\quad\quad\quad\quad\quad\cdots\quad\quad\quad\quad\:\:\:\to 
    H_2(\IA_n, A''_2(n)/A_{2,2}(n))\to 
    H_2(\IA_n, A''_2(n)/A_{2,1}(n))\\
    &\xrightarrow{\partial_2} 
    H_1(\IA_n, A_{2,1}(n)/A_{2,2}(n))\to  
    H_1(\IA_n, A''_2(n)/A_{2,2}(n))\to 
    H_1(\IA_n, A''_2(n)/A_{2,1}(n))\\
    &\xrightarrow{\partial_1}
    H_0(\IA_n, A_{2,1}(n)/A_{2,2}(n))\to  
    H_0(\IA_n, A''_2(n)/A_{2,2}(n))\to 
    H_0(\IA_n, A''_2(n)/A_{2,1}(n))\to 0.
\end{split}
\end{gather}
Here, we have
\begin{gather*}
    H_0(\IA_n, A_{2,1}(n)/A_{2,2}(n))\cong A_{2,1}(n)/A_{2,2}(n)\cong V_{0,1^3},\\
    H_0(\IA_n, A''_2(n)/A_{2,2}(n))\cong
    H_0(\IA_n, A''_2(n)/A_{2,1}(n))\cong A''_2(n)/A_{2,1}(n).
\end{gather*}
Therefore, we have 
\begin{gather}\label{imagepartial1}
    \im(\partial_1)\cong V_{0,1^3}.
\end{gather}

\subsection{The image of the boundary map $\partial_2$}

Here, we will detect a $\GL(n,\Z)$-subrepresentation of $H_1(\IA_n,A_{2,1}(n)/A_{2,2}(n))$ which is contained in the image of the boundary map $\partial_2$.

\begin{proposition}\label{imageboundary2}
    We have 
    \begin{gather*}
        \im (\partial_2) \supset
        \begin{cases}
          V_{1^2,1^4}\oplus V_{1^2,21^2}\oplus V_{1,1^3}^{\oplus 2}\oplus V_{1,21}\oplus V_{0,1^2}^{\oplus 2} & n\ge 6\\
          V_{1^2,21^2}\oplus V_{1,1^3}^{\oplus 2}\oplus V_{1,21}\oplus V_{0,1^2}^{\oplus 2} & n=5\\
          V_{1,1^3}\oplus V_{1,21}\oplus V_{0,1^2}^{\oplus 2} & n=4\\
          V_{0,1^2} & n=3.
        \end{cases}
    \end{gather*}
\end{proposition}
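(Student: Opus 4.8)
The plan is to exhibit explicit abelian $2$-cycles in $H_2(\IA_n, A''_2(n)/A_{2,1}(n))$, apply the boundary map $\partial_2$ of the long exact sequence \eqref{H1A2''/A22}, and detect the resulting classes in $H_1(\IA_n, A_{2,1}(n)/A_{2,2}(n))$ by pairing against the seven contraction maps $c_{1^2,1^4}, c_{1^2,21^2}, c_{1,1^3}, c_{1,1^3}', c_{1,21}, c_{0,1^2}, c_{0,1^2}'$ introduced in Section \ref{subseccontraction}. Concretely, since $A''_2(n)/A_{2,1}(n)\cong V_{0,2^2}$ has trivial $\IA_n$-action, $H_2(\IA_n, A''_2(n)/A_{2,1}(n))\cong V_{0,2^2}\otimes H_2(\IA_n,\Q)$, and I would build classes of the form $v\otimes \alpha$, where $v$ is a Jacobi diagram representing a highest-weight vector of a chosen irreducible summand of $V_{0,2^2}$ and $\alpha$ is one of the abelian $2$-cycles of $\IA_n$. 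The two abelian cycles $\alpha_{0,2}=[g_{2,1}\otimes g_{3,1}g_{3,2}-g_{3,1}g_{3,2}\otimes g_{2,1}]$ and $\alpha_{0,1^2}=[g_{2,1}\otimes g_{4,3}-g_{4,3}\otimes g_{2,1}]$ recalled in Section \ref{subsecAbeliancycle} are the natural inputs; more general products of commuting Magnus generators $g_{a,b}$ and $f_{a,b,c}$ will be needed to reach the components $V_{1^2,1^4}$, $V_{1^2,21^2}$, and $V_{1,21}$.

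The key computational device is the formula for $\partial_2$ on an abelian cycle. Given a $2$-cycle represented by a pair of commuting elements $\phi,\psi\in\IA_n$ together with a lift $\tilde v\in A''_2(n)/A_{2,2}(n)$ of $v\in A''_2(n)/A_{2,1}(n)$, the connecting map sends the class of $(\tilde v;\phi,\psi)$ to a $1$-cycle in $A_{2,1}(n)/A_{2,2}(n)$ expressible through the bracket map of Lemma \ref{bracketmap}: roughly, $\partial_2$ produces a combination of $[\,[\tilde v,\phi],\psi\,]$-type terms, which live in $A_{2,1}(n)/A_{2,2}(n)$ because $A''_2(n)/A_{2,2}(n)$ has a two-step filtration with $\IA_n$ acting via the bracket increasing the number of trivalent vertices. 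First I would write down, for each targeted irreducible $V_{\lambda^+,\lambda^-}$ on the right-hand side of Proposition \ref{imageboundary2}, an explicit diagram $v$ and commuting pair $(\phi,\psi)$, compute the lift's double bracket as a concrete $\Q$-linear combination of Jacobi diagrams with exactly one trivalent vertex, and then evaluate the relevant contraction map from Section \ref{subseccontraction} on the corresponding element of $(H^*)^{\otimes 4}\otimes H^{\otimes 2}$. Nonvanishing of that contraction certifies that the irreducible appears in $\im(\partial_2)$; since the contraction maps were designed to separate exactly the seven constituents appearing in \eqref{decompH1A21/A22}, one nonzero value per constituent suffices. The stabilization behaviour (which constituents survive for $n=3,4,5$) is then automatic: a contraction map into $\bigwedge^4 H^*$ vanishes identically for $n<4$, one into $\bigwedge^3 H^*$ vanishes for $n<3$, and so on, matching the case split in the statement.

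The main obstacle I anticipate is bookkeeping: translating the action of the Magnus generators $g_{a,b}$, $f_{a,b,c}$ on a degree-$2$ Jacobi diagram into an explicit element of $(H^*)^{\otimes 4}\otimes H^{\otimes 2}$ via the handlebody/cabling description of Section \ref{subsecJacobidiagrams}, keeping track of the STU relations and the $\frac12$-factors, and then showing the resulting double bracket is not killed in the quotient $A_{2,1}(n)/A_{2,2}(n)$. A secondary subtlety is choosing the lifts and the commuting pairs so that the computed class is genuinely a highest-weight vector for the intended $V_{\lambda^+,\lambda^-}$ rather than a mixture; here I would lean on the fact that the contraction maps are $\GL(n,\Z)$-equivariant, so detecting a nonzero image in the target of $c_{\lambda^+,\lambda^-}$ already pins down the isomorphism type up to the known multiplicities in \eqref{decompH1A21/A22}, and for the two components $V_{1,1^3}$ and $V_{0,1^2}$ occurring with multiplicity two I would use the two distinct contractions $c_{1,1^3}, c'_{1,1^3}$ and $c_{0,1^2}, c'_{0,1^2}$ to produce two linearly independent classes. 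Finally, I would note that $V_{0,2}$ is deliberately \emph{absent} from the conclusion: it will be accounted for separately via the exact sequence \eqref{H1A2''new}, so I do not attempt to hit it here.
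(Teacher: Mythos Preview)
Your overall strategy---building explicit classes $v\otimes\alpha$ with $\alpha$ an abelian $2$-cycle, pushing them through $\partial_2$, and detecting the outcome via the contraction maps of Section~\ref{subseccontraction}---is exactly what the paper does. But your description of the connecting map is wrong, and this is not cosmetic.

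The expression $[[\tilde v,\phi],\psi]$ is a \emph{double} bracket: starting from a lift $\tilde v\in A''_2(n)/A_{2,2}(n)$, one bracket lands in $A_{2,1}/A_{2,2}$ (a trivial $\IA_n$-module), and a second bracket then vanishes. More to the point, a class in $H_1(\IA_n, A_{2,1}/A_{2,2})\cong (A_{2,1}/A_{2,2})\otimes H_1(\IA_n,\Q)$ must carry a group-homology factor, which your formula lacks. Chasing the bar complex gives instead
\[
\partial_2\bigl([\tilde v]\otimes[\phi\otimes\psi-\psi\otimes\phi]\bigr)
=[\tilde v,\phi]\otimes[\psi]_{[\IA_n,\IA_n]}-[\tilde v,\psi]\otimes[\phi]_{[\IA_n,\IA_n]},
\]
a \emph{single} bracket (landing in $A_{2,1}/A_{2,2}\cong V_{0,1^3}$) tensored with the abelianization class of the other commuting element. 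In the paper's choices of $v$ and $v'$, one of the two terms vanishes by design, yielding for instance $\partial_2([v]\otimes\alpha_{0,2})=[[v,g_{2,1}]]_{A_{2,2}}\otimes[g_{3,1}g_{3,2}]_{[\IA_n,\IA_n]}$. Until you have this formula right, the subsequent contraction computations cannot be carried out.

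Two smaller corrections. First, you will not need abelian cycles beyond $\alpha_{0,2}$ and $\alpha_{0,1^2}$: applied to one diagram $v$ (for $n\ge 3$) and one diagram $v'$ (for $n\ge 4$), these two cycles already hit every constituent listed, including $V_{1^2,1^4}$, $V_{1^2,21^2}$, and $V_{1,21}$. Second, the contraction maps by themselves do not isolate individual irreducibles; the paper post-composes with operators of the form $(E_{k,l}-\id)$ (and once with a permutation matrix $P_{1,4}$) to extract specific weight vectors. It is the need for enough distinct indices in these operators---not merely the vanishing of $\bigwedge^k H^*$ for small $n$---that drives the case split in $n$; e.g.\ $V_{1,1^3}$ is nonzero for $n=4$, but detecting its multiplicity~$2$ requires the second input $v'\otimes\alpha_{0,1^2}$ and an extra index, hence $n\ge 5$.
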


\begin{proof}
 For $n\ge 3$, set
 \begin{gather*}
 v:=\scalebox{0.5}{$\centre{
\begingroup%
  \makeatletter%
  \providecommand\color[2][]{%
    \errmessage{(Inkscape) Color is used for the text in Inkscape, but the package 'color.sty' is not loaded}%
    \renewcommand\color[2][]{}%
  }%
  \providecommand\transparent[1]{%
    \errmessage{(Inkscape) Transparency is used (non-zero) for the text in Inkscape, but the package 'transparent.sty' is not loaded}%
    \renewcommand\transparent[1]{}%
  }%
  \providecommand\rotatebox[2]{#2}%
  \newcommand*\fsize{\dimexpr\f@size pt\relax}%
  \newcommand*\lineheight[1]{\fontsize{\fsize}{#1\fsize}\selectfont}%
  \ifx\svgwidth\undefined%
    \setlength{\unitlength}{158.97484197bp}%
    \ifx\svgscale\undefined%
      \relax%
    \else%
      \setlength{\unitlength}{\unitlength * \real{\svgscale}}%
    \fi%
  \else%
    \setlength{\unitlength}{\svgwidth}%
  \fi%
  \global\let\svgwidth\undefined%
  \global\let\svgscale\undefined%
  \makeatother%
  \begin{picture}(1,0.19050601)%
    \lineheight{1}%
    \setlength\tabcolsep{0pt}%
    \put(0,0){\includegraphics[width=\unitlength,page=1]{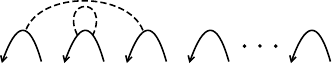}}%
  \end{picture}%
\endgroup%
}$}-\;\frac{1}{2}\scalebox{0.5}{$\centre{
\begingroup%
  \makeatletter%
  \providecommand\color[2][]{%
    \errmessage{(Inkscape) Color is used for the text in Inkscape, but the package 'color.sty' is not loaded}%
    \renewcommand\color[2][]{}%
  }%
  \providecommand\transparent[1]{%
    \errmessage{(Inkscape) Transparency is used (non-zero) for the text in Inkscape, but the package 'transparent.sty' is not loaded}%
    \renewcommand\transparent[1]{}%
  }%
  \providecommand\rotatebox[2]{#2}%
  \newcommand*\fsize{\dimexpr\f@size pt\relax}%
  \newcommand*\lineheight[1]{\fontsize{\fsize}{#1\fsize}\selectfont}%
  \ifx\svgwidth\undefined%
    \setlength{\unitlength}{158.97484198bp}%
    \ifx\svgscale\undefined%
      \relax%
    \else%
      \setlength{\unitlength}{\unitlength * \real{\svgscale}}%
    \fi%
  \else%
    \setlength{\unitlength}{\svgwidth}%
  \fi%
  \global\let\svgwidth\undefined%
  \global\let\svgscale\undefined%
  \makeatother%
  \begin{picture}(1,0.14609354)%
    \lineheight{1}%
    \setlength\tabcolsep{0pt}%
    \put(0,0){\includegraphics[width=\unitlength,page=1]{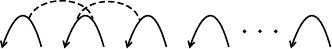}}%
  \end{picture}%
\endgroup%
}$}-\;\frac{1}{2}\scalebox{0.5}{$\centre{
\begingroup%
  \makeatletter%
  \providecommand\color[2][]{%
    \errmessage{(Inkscape) Color is used for the text in Inkscape, but the package 'color.sty' is not loaded}%
    \renewcommand\color[2][]{}%
  }%
  \providecommand\transparent[1]{%
    \errmessage{(Inkscape) Transparency is used (non-zero) for the text in Inkscape, but the package 'transparent.sty' is not loaded}%
    \renewcommand\transparent[1]{}%
  }%
  \providecommand\rotatebox[2]{#2}%
  \newcommand*\fsize{\dimexpr\f@size pt\relax}%
  \newcommand*\lineheight[1]{\fontsize{\fsize}{#1\fsize}\selectfont}%
  \ifx\svgwidth\undefined%
    \setlength{\unitlength}{158.97484198bp}%
    \ifx\svgscale\undefined%
      \relax%
    \else%
      \setlength{\unitlength}{\unitlength * \real{\svgscale}}%
    \fi%
  \else%
    \setlength{\unitlength}{\svgwidth}%
  \fi%
  \global\let\svgwidth\undefined%
  \global\let\svgscale\undefined%
  \makeatother%
  \begin{picture}(1,0.14375739)%
    \lineheight{1}%
    \setlength\tabcolsep{0pt}%
    \put(0,0){\includegraphics[width=\unitlength,page=1]{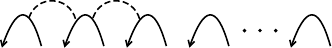}}%
  \end{picture}%
\endgroup%
}$}\in A_2(n).
 \end{gather*}
 Then we have
 \begin{gather*}
     \begin{split}
         \partial_2([v]_{A_{2,1}(n)}\otimes \alpha_{0,2})
         &= [[v, g_{2,1}]]_{A_{2,2}(n)}\otimes [g_{3,1}g_{3,2}]_{[\IA_n,\IA_n]}\\
         &=3(e_1^*\wedge e_2^*\wedge e_3^*)\otimes(e_1^*\otimes (e_3\wedge e_1)+e_2^*\otimes (e_3\wedge e_2))\\
         &\in H_1(\IA_n,A_{2,1}(n)/A_{2,2}(n)),
     \end{split}
 \end{gather*}
 where $[,]$ is the bracket map appeared in Lemma \ref{bracketmap}.
 If $n\ge 3$, then $\im (\partial_2)$ contains $V_{0,1^2}$ since we have
 \begin{gather*}
     c_{0,1^2}\;\iota(\partial_2([v]_{A_{2,1}(n)}\otimes \alpha_{0,2}))= e_1^*\wedge e_2^*.
 \end{gather*}
 If $n\ge 4$, then $\im (\partial_2)$ contains $V_{1,1^3}\oplus V_{1,21}$ since we have
\begin{gather*}
 \begin{split}
     (E_{n,3}-\id)c_{1,1^3}\;\iota(\partial_2([v]_{A_{2,1}(n)}\otimes \alpha_{0,2}))
     &=3(e_1^*\wedge e_2^*\wedge e_3^*)\otimes e_n,\\
     (E_{n,1}-\id) c_{1,21}\;\iota(\partial_2([v]_{A_{2,1}(n)}\otimes \alpha_{0,2}))
     &=-\frac{1}{2}(e_1^*\wedge e_2^*)\otimes e_1^*\otimes e_n.\\
 \end{split}
 \end{gather*}
 If $n\ge 5$, then $\im (\partial_2)$ contains $V_{1^2,21^2}$ since we have
 \begin{gather*}
 \begin{split}
     (E_{n-1,3}-\id)(E_{n,1}-\id)c_{1^2,21^2} \;\iota(\partial_2([v]_{A_{2,1}(n)}\otimes \alpha_{0,2}))
     &=3(e_1^*\wedge e_2^*\wedge e_3^*)\otimes e_1^*\otimes (e_{n-1}\wedge e_n).
 \end{split}
 \end{gather*}

 In order to detect $V_{1^2,1^4}$ for $n\ge 6$ and the multiplicities of $V_{1,1^3}$ for $n\ge 5$ and of $V_{0,1^2}$ for $n\ge 4$, we set
 \begin{gather*}
 v':=\scalebox{0.5}{$\centre{
\begingroup%
  \makeatletter%
  \providecommand\color[2][]{%
    \errmessage{(Inkscape) Color is used for the text in Inkscape, but the package 'color.sty' is not loaded}%
    \renewcommand\color[2][]{}%
  }%
  \providecommand\transparent[1]{%
    \errmessage{(Inkscape) Transparency is used (non-zero) for the text in Inkscape, but the package 'transparent.sty' is not loaded}%
    \renewcommand\transparent[1]{}%
  }%
  \providecommand\rotatebox[2]{#2}%
  \newcommand*\fsize{\dimexpr\f@size pt\relax}%
  \newcommand*\lineheight[1]{\fontsize{\fsize}{#1\fsize}\selectfont}%
  \ifx\svgwidth\undefined%
    \setlength{\unitlength}{188.97484239bp}%
    \ifx\svgscale\undefined%
      \relax%
    \else%
      \setlength{\unitlength}{\unitlength * \real{\svgscale}}%
    \fi%
  \else%
    \setlength{\unitlength}{\svgwidth}%
  \fi%
  \global\let\svgwidth\undefined%
  \global\let\svgscale\undefined%
  \makeatother%
  \begin{picture}(1,0.15831268)%
    \lineheight{1}%
    \setlength\tabcolsep{0pt}%
    \put(0,0){\includegraphics[width=\unitlength,page=1]{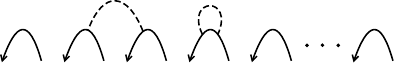}}%
  \end{picture}%
\endgroup%
}$}-\;\frac{1}{2}\scalebox{0.5}{$\centre{
\begingroup%
  \makeatletter%
  \providecommand\color[2][]{%
    \errmessage{(Inkscape) Color is used for the text in Inkscape, but the package 'color.sty' is not loaded}%
    \renewcommand\color[2][]{}%
  }%
  \providecommand\transparent[1]{%
    \errmessage{(Inkscape) Transparency is used (non-zero) for the text in Inkscape, but the package 'transparent.sty' is not loaded}%
    \renewcommand\transparent[1]{}%
  }%
  \providecommand\rotatebox[2]{#2}%
  \newcommand*\fsize{\dimexpr\f@size pt\relax}%
  \newcommand*\lineheight[1]{\fontsize{\fsize}{#1\fsize}\selectfont}%
  \ifx\svgwidth\undefined%
    \setlength{\unitlength}{188.97484154bp}%
    \ifx\svgscale\undefined%
      \relax%
    \else%
      \setlength{\unitlength}{\unitlength * \real{\svgscale}}%
    \fi%
  \else%
    \setlength{\unitlength}{\svgwidth}%
  \fi%
  \global\let\svgwidth\undefined%
  \global\let\svgscale\undefined%
  \makeatother%
  \begin{picture}(1,0.14279739)%
    \lineheight{1}%
    \setlength\tabcolsep{0pt}%
    \put(0,0){\includegraphics[width=\unitlength,page=1]{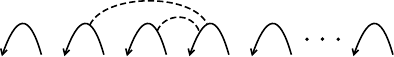}}%
  \end{picture}%
\endgroup%
}$}-\;\frac{1}{2}\scalebox{0.5}{$\centre{
\begingroup%
  \makeatletter%
  \providecommand\color[2][]{%
    \errmessage{(Inkscape) Color is used for the text in Inkscape, but the package 'color.sty' is not loaded}%
    \renewcommand\color[2][]{}%
  }%
  \providecommand\transparent[1]{%
    \errmessage{(Inkscape) Transparency is used (non-zero) for the text in Inkscape, but the package 'transparent.sty' is not loaded}%
    \renewcommand\transparent[1]{}%
  }%
  \providecommand\rotatebox[2]{#2}%
  \newcommand*\fsize{\dimexpr\f@size pt\relax}%
  \newcommand*\lineheight[1]{\fontsize{\fsize}{#1\fsize}\selectfont}%
  \ifx\svgwidth\undefined%
    \setlength{\unitlength}{188.97484239bp}%
    \ifx\svgscale\undefined%
      \relax%
    \else%
      \setlength{\unitlength}{\unitlength * \real{\svgscale}}%
    \fi%
  \else%
    \setlength{\unitlength}{\svgwidth}%
  \fi%
  \global\let\svgwidth\undefined%
  \global\let\svgscale\undefined%
  \makeatother%
  \begin{picture}(1,0.13339946)%
    \lineheight{1}%
    \setlength\tabcolsep{0pt}%
    \put(0,0){\includegraphics[width=\unitlength,page=1]{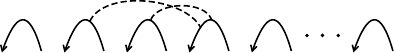}}%
  \end{picture}%
\endgroup%
}$}\in A_2(n)
 \end{gather*}
 for $n\ge 4$.
 Then we have
 \begin{gather*}
     \begin{split}
         \partial_2([v']_{A_{2,1}(n)}\otimes \alpha_{0,1^2})
         &=-[[v', g_{4,3}]]_{A_{2,2}(n)}\otimes [g_{2,1}]_{[\IA_n,\IA_n]}\\
         &=-3 (e_2^*\wedge e_3^*\wedge e_4^*)\otimes e_1^*\otimes (e_2\wedge e_1)\\
         &\in H_1(\IA_n,A_{2,1}(n)/A_{2,2}(n)).
     \end{split}
 \end{gather*}
 If $n\ge 6$, then $\im (\partial_2)$ contains $V_{1^2,1^4}$ since we have
  \begin{gather*}
     (E_{n-1,2}-\id)(E_{n,1}-\id)c_{1^2,1^4} \;\iota(\partial_2([v']_{A_{2,1}(n)}\otimes \alpha_{0,1^2}))
     =3(e_1^*\wedge e_2^*\wedge e_3^*\wedge e_4^*)\otimes (e_{n-1}\wedge e_n).
 \end{gather*}
 If $n\ge 4$, then $\im (\partial_2)$ contains $V_{0,1^2}^{\oplus 2}$ since we have
\begin{alignat*}{2}
     c_{0,1^2}\;\iota(\partial_2([v]_{A_{2,1}(n)}\otimes \alpha_{0,2}))&= e_1^*\wedge e_2^*,&\quad
     c_{0,1^2}'\;\iota(\partial_2([v]_{A_{2,1}(n)}\otimes \alpha_{0,2}))&=- e_1^*\wedge e_2^*,\\
     c_{0,1^2}\;\iota(\partial_2([v']_{A_{2,1}(n)}\otimes \alpha_{0,1^2}))&=-\frac{1}{2}e_3^*\wedge e_4^*,&\quad
     c_{0,1^2}'\;\iota(\partial_2([v']_{A_{2,1}(n)}\otimes \alpha_{0,1^2}))&=0.
\end{alignat*}
 If $n\ge 5$, then $\im (\partial_2)$ contains $V_{1,1^3}^{\oplus 2}$ since we have
 \begin{gather*}
    \begin{split}
     (E_{n,3}-\id)c_{1,1^3}\;\iota(\partial_2([v]_{A_{2,1}(n)}\otimes \alpha_{0,2}))
     &=3(e_1^*\wedge e_2^*\wedge e_3^*)\otimes e_n,\\
     (E_{n,3}-\id)c_{1,1^3}'\;\iota(\partial_2([v]_{A_{2,1}(n)}\otimes \alpha_{0,2}))
     &=(e_1^*\wedge e_2^*\wedge e_3^*)\otimes e_n,\\
     P_{1,4}(E_{n,2}-\id)c_{1,1^3}\;\iota(\partial_2([v']_{A_{2,1}(n)}\otimes \alpha_{0,1^2}))
     &=-\frac{3}{2}(e_1^*\wedge e_2^*\wedge e_3^*)\otimes e_n,\\
     P_{1,4}(E_{n,2}-\id)c'_{1,1^3}\;\iota(\partial_2([v']_{A_{2,1}(n)}\otimes \alpha_{0,1^2}))
     &=0.
    \end{split}
 \end{gather*}
 This completes the proof.
\end{proof}

\subsection{Determination of $H_1(\IA_n,A''_2(n)/A_{2,2}(n))$}

By combining the above results, we will determine the $\GL(n,\Z)$-module structure of the homology of $\IA_n$ with coefficients in $A''_2(n)/A_{2,2}(n)$.

\begin{theorem}\label{thmH1A''2/A22}
We have
    \begin{gather*}
    \begin{split}
       &H_1(\IA_n,A''_2(n)/A_{2,2}(n))\\
       &\cong
        \begin{cases}
        V_{1^2,2^21}\oplus V_{1^2,32}\oplus V_{1,21^2}\oplus V_{1,2^2}^{\oplus 2} \oplus V_{1,31}
        \oplus V_{0,21}^{\oplus 2}\oplus V_{0,2} & n\ge 5\\
        V_{1^2,32}\oplus V_{1,21^2}\oplus V_{1,2^2}^{\oplus 2} \oplus V_{1,31}\oplus V_{0,21}^{\oplus 2}\oplus V_{0,2} & n= 4\\
        V_{1,2^2} \oplus V_{1,31}\oplus V_{0,21}^{\oplus 2}\oplus V_{0,2} & n=3.
        \end{cases}
    \end{split}
    \end{gather*}
\end{theorem}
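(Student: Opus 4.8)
\textbf{Proof proposal for Theorem \ref{thmH1A''2/A22}.}

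The plan is to extract the $\GL(n,\Z)$-module structure of $H_1(\IA_n,A''_2(n)/A_{2,2}(n))$ from the two long exact sequences \eqref{H1A2''new} and \eqref{H1A2''/A22}, using the known trivial-coefficient computations \eqref{decompH1A''2/A21}--\eqref{decompH1A22} as the input. First I would read off from \eqref{H1A2''/A22} the four-term segment
\begin{gather*}
H_2(\IA_n, A''_2(n)/A_{2,1}(n))\xrightarrow{\partial_2}
H_1(\IA_n, A_{2,1}(n)/A_{2,2}(n))\to
H_1(\IA_n, A''_2(n)/A_{2,2}(n))\to
H_1(\IA_n, A''_2(n)/A_{2,1}(n))\xrightarrow{\partial_1}\cdots,
\end{gather*}
which yields a short exact sequence
$$0\to \operatorname{coker}(\partial_2)\to H_1(\IA_n, A''_2(n)/A_{2,2}(n))\to \ker(\partial_1)\to 0.$$
By \eqref{imagepartial1} we have $\im(\partial_1)\cong V_{0,1^3}$, so $\ker(\partial_1)\cong H_1(\IA_n, A''_2(n)/A_{2,1}(n))\ominus V_{0,1^3}$, which by \eqref{decompH1A''2/A21} is exactly $V_{1^2,2^21}\oplus V_{1^2,32}\oplus V_{1,21^2}\oplus V_{1,2^2}^{\oplus 2} \oplus V_{1,31}\oplus V_{0,21}^{\oplus 2}$ (dropping the first two summands for $n=4$ and keeping only $V_{1,2^2}\oplus V_{1,31}\oplus V_{0,21}^{\oplus 2}$ for $n=3$). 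Since every irreducible algebraic representation is a summand by complete reducibility, it remains only to identify $\operatorname{coker}(\partial_2) = H_1(\IA_n, A_{2,1}(n)/A_{2,2}(n))\ominus \im(\partial_2)$.

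Next I would compare Proposition \ref{imageboundary2} with the decomposition \eqref{decompH1A21/A22}. The proposition gives a lower bound for $\im(\partial_2)$ which, upon subtraction from \eqref{decompH1A21/A22}, leaves at most $V_{0,2}$ in all cases $n\ge 3$ (and, for $n=3$, also one copy of $V_{0,1^2}$ is unaccounted; for $n=3,4,5$ one must check the missing $V_{1^2,\cdots}$ or $V_{1,1^3}$ terms are genuinely absent from $H_1(\IA_n, A_{2,1}(n)/A_{2,2}(n))$, which they are since they do not appear in \eqref{decompH1A21/A22}). The point is that $\operatorname{coker}(\partial_2)$ is a subquotient both of $H_1(\IA_n, A_{2,1}(n)/A_{2,2}(n))$ and, via exactness, maps injectively into $H_1(\IA_n, A''_2(n)/A_{2,2}(n))$. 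To pin down that $\operatorname{coker}(\partial_2)\cong V_{0,2}$ exactly, I would invoke the other long exact sequence: \eqref{H1A2''new} shows that $H_1(\IA_n, A''_2(n)/A_{2,2}(n))$ surjects onto $V_{0,2}$, so $V_{0,2}$ is definitely a summand; and comparing total composition factors, $V_{0,2}$ cannot come from $\ker(\partial_1)$ (it does not occur in the list above), hence it must be the $\operatorname{coker}(\partial_2)$ contribution. Conversely $\operatorname{coker}(\partial_2)$ cannot be larger than $V_{0,2}$ by the bound from Proposition \ref{imageboundary2} together with \eqref{decompH1A21/A22}. For $n=3$, the analysis of the stray $V_{0,1^2}$ requires an extra argument: from \eqref{decompH1A21/A22} with $n=3$ we have $H_1(\IA_3, A_{2,1}(3)/A_{2,2}(3))\cong V_{0,1^2}\oplus V_{0,2}$, and Proposition \ref{imageboundary2} gives $\im(\partial_2)\supset V_{0,1^2}$, so $\operatorname{coker}(\partial_2)$ is a subquotient of $V_{0,2}$; combined with the surjection onto $V_{0,2}$ from \eqref{H1A2''new} this forces equality, so the $n=3$ case comes out as $V_{1,2^2}\oplus V_{1,31}\oplus V_{0,21}^{\oplus 2}\oplus V_{0,2}$ as claimed.

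Assembling: $H_1(\IA_n, A''_2(n)/A_{2,2}(n))$ is an extension of $\ker(\partial_1)$ by $\operatorname{coker}(\partial_2)\cong V_{0,2}$, and since all modules in sight are algebraic $\GL(n,\Z)$-representations hence completely reducible, the extension splits and we get the direct sum as stated. The main obstacle I anticipate is not the bookkeeping but ensuring the bound in Proposition \ref{imageboundary2} is \emph{sharp} — i.e. that $\im(\partial_2)$ contains nothing beyond what is listed, equivalently that $\operatorname{coker}(\partial_2)$ is at least $V_{0,2}$. This is exactly where the second long exact sequence \eqref{H1A2''new} is essential: it furnishes the surjection $H_1(\IA_n, A''_2(n)/A_{2,2}(n))\twoheadrightarrow V_{0,2}$ that cannot be explained by $\ker(\partial_1)$, closing the gap. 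A secondary subtlety is that one must know the contraction maps $c_{0,1^2}, c_{0,1^2}', c_{1,1^3}, \ldots$ applied to the explicit cycles $\partial_2([v]\otimes\alpha_{0,2})$ and $\partial_2([v']\otimes\alpha_{0,1^2})$ genuinely detect distinct irreducible summands with the stated multiplicities; this is handled by the explicit computations recorded in the proof of Proposition \ref{imageboundary2}, which I would simply cite rather than redo.
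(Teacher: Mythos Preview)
Your proposal is correct and follows essentially the same route as the paper's proof. One small refinement: the splitting of the short exact sequence should not be justified by asserting that $H_1(\IA_n,A''_2(n)/A_{2,2}(n))$ is itself algebraic (this is not known a priori---cf.\ Conjecture~\ref{conjalgebraic}), but rather by noting that the surjection onto $V_{0,2}$ from \eqref{H1A2''new} cannot factor through $\ker(\partial_1)$ (which contains no copy of $V_{0,2}$) and hence restricts to an isomorphism on $\Cok(\partial_2)\cong V_{0,2}$, providing the retraction; this is what the paper's phrase ``it follows that \ldots\ the short exact sequence \eqref{H1A''2/A22} splits'' is tacitly invoking.
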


\newcommand\Cok{\operatorname{Cok}}
\begin{proof}
    By the long exact sequence \eqref{H1A2''/A22}, we have
    \begin{gather}\label{H1A''2/A22}
        0\to \Cok(\partial_2)\to H_1(\IA_n,A''_2(n)/A_{2,2}(n))\to \ker(\partial_1)\to 0.
    \end{gather}
    By the computation \eqref{imagepartial1} of $\im(\partial_1)$ and the decomposition \eqref{decompH1A''2/A21} of $H_1(\IA_n,A''_2(n)/A_{2,1}(n))$, we obtain
    \begin{gather*}
        \ker(\partial_1)\cong  
        \begin{cases}
        V_{1^2,2^21}\oplus V_{1^2,32}\oplus V_{1,21^2}\oplus V_{1,2^2}^{\oplus 2} \oplus V_{1,31}\oplus V_{0,21}^{\oplus 2} & n\ge 5\\
        V_{1^2,32}\oplus V_{1,21^2}\oplus V_{1,2^2}^{\oplus 2} \oplus V_{1,31}\oplus V_{0,21}^{\oplus 2} & n= 4\\
        V_{1,2^2} \oplus V_{1,31}\oplus V_{0,21}^{\oplus 2} & n=3.
    \end{cases}
    \end{gather*}
    By the decomposition \eqref{decompH1A21/A22} of $H_1(\IA_n,A_{2,1}(n)/A_{2,2}(n))$ and Proposition \ref{imageboundary2}, it follows that we have
    \begin{gather*}
        \Cok(\partial_2)\subset V_{0,2}.
    \end{gather*}
    Since we have a surjective $\GL(n,\Z)$-homomorphism by \eqref{H1A2''new}, it follows that
    \begin{gather*}
        \Cok(\partial_2)\cong V_{0,2}
    \end{gather*}
    and that the short exact sequence \eqref{H1A''2/A22} splits as $\GL(n,\Z)$-representations.
    This completes the proof.
\end{proof}

\section{The first homology of $\IA_n$ with coefficients in $A_{2,1}(n)$ for $n\ge 4$}\label{secH1A21}

In a way similar to the above section, we will determine the $\GL(n,\Z)$-module structure of $H_1(\IA_n,A_{2,1}(n))$ for $n\ge 4$.

The short exact sequence
\begin{gather*}
    0\to A_{2,2}(n)\to A_{2,1}(n)\to A_{2,1}(n)/A_{2,2}(n)\to 0
\end{gather*}
induces the long exact sequence 
\begin{gather}\label{H1A21}
    \begin{split}
    &\quad\quad\quad\quad\quad\cdots\quad\quad\quad\to 
    H_2(\IA_n, A_{2,1}(n))\to 
    H_2(\IA_n, A_{2,1}(n)/A_{2,2}(n))\\
    &\xrightarrow{\partial'_2} 
    H_1(\IA_n, A_{2,2}(n))\to  
    H_1(\IA_n, A_{2,1}(n))\xrightarrow{\pi_*} 
    H_1(\IA_n, A_{2,1}(n)/A_{2,2}(n))\\
    &\xrightarrow{\partial'_1}
    H_0(\IA_n, A_{2,2}(n))\to  
    H_0(\IA_n, A_{2,1}(n))\to 
    H_0(\IA_n, A_{2,1}(n)/A_{2,2}(n))\to 0.
    \end{split}
\end{gather}

Since we have
\begin{gather*}
H_0(\IA_n, A_{2,2}(n))\cong A_{2,2}(n)\cong V_{0,2},\\
H_0(\IA_n, A_{2,1}(n))\cong H_0(\IA_n, A_{2,1}(n)/A_{2,2}(n))\cong A_{2,1}(n)/A_{2,2}(n),
\end{gather*}
it follows that
\begin{gather}\label{imageboundary1'}
    \im (\partial'_1)\cong V_{0,2}.
\end{gather}

By the long exact sequence \eqref{H1A21}, we have the following short exact sequence of $\GL(n,\Z)$-representations
\begin{gather}\label{H1A21shortex}
0\to \Cok(\partial'_2)\to H_1(\IA_n,A_{2,1}(n))\xrightarrow{\pi_*} \ker(\partial'_1)\to 0.  
\end{gather}

\begin{lemma}\label{imageboundary2'}
    For $n\ge 4$, we have $\Cok(\partial'_2)=0$, that is, $\partial'_2$ is surjective.
\end{lemma}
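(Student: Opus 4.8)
The plan is to show that $\partial_2'$ is surjective by exhibiting enough abelian $2$-cycles in $H_2(\IA_n, A_{2,1}(n)/A_{2,2}(n))$ whose images under $\partial_2'$ span all of $H_1(\IA_n, A_{2,2}(n))$. By the decomposition \eqref{decompH1A22}, the target $H_1(\IA_n, A_{2,2}(n))$ is $V_{1^2,21}\oplus V_{1^2,3}\oplus V_{1,1^2}\oplus V_{1,2}^{\oplus 2}\oplus V_{0,1}$ for $n\ge 4$ (with the $V_{1^2,21}$ summand absent when $n=3$, though here we only need $n\ge 4$). So the task reduces to detecting each of these six irreducible components (counting multiplicity) in $\im(\partial_2')$.

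First I would recall the description of $\partial_2'$ on abelian cycles: for a pair of commuting elements $\phi_1,\phi_2\in\IA_n$ and a lift $w\in A_{2,1}(n)$ of a class $[w]_{A_{2,2}(n)}\in A_{2,1}(n)/A_{2,2}(n)$ fixed by both $\phi_i$ modulo $A_{2,2}(n)$, the boundary $\partial_2'$ of the abelian $2$-cycle $[w]_{A_{2,2}(n)}\otimes[\phi_1\otimes\phi_2-\phi_2\otimes\phi_1]$ is represented by $[[w,\phi_1]]\otimes[\phi_2]_{[\IA_n,\IA_n]}$ (up to sign and the analogous term with the roles swapped), exactly as in the formula used in the proof of Proposition \ref{imageboundary2}. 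I would then use the two commuting pairs already introduced, $(g_{2,1}, g_{3,1}g_{3,2})$ giving $\alpha_{0,2}$ and $(g_{2,1},g_{4,3})$ giving $\alpha_{0,1^2}$, together with well-chosen Jacobi diagrams $w\in A_{2,1}(n)$ (necessarily diagrams with exactly one trivalent vertex, so that the bracket lands in $A_{2,2}(n)$, the part with two trivalent vertices which is a quotient-free copy of $V_{0,2}$). For each resulting element of $H_1(\IA_n,A_{2,2}(n))\subset (H^*)^{\otimes 3}\otimes H^{\otimes 2}$ I would apply the contraction maps $c_{1^2,21}, c_{1^2,3}, c_{1,1^2}, c_{1,2}, c'_{1,2}, c_{0,1}$ from Section \ref{subseccontraction}, possibly composed with matrices $E_{k,l}-\id$ and permutation matrices $P_{k,l}$ as in Proposition \ref{imageboundary2}, to isolate each irreducible summand and verify it is nonzero; the multiplicity-two summand $V_{1,2}^{\oplus 2}$ requires two linearly independent cycles detected by $c_{1,2}$ and $c'_{1,2}$ separately.

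The main obstacle I expect is bookkeeping: one must actually compute the brackets $[w,g_{2,1}]$ and $[w,g_{4,3}]$ for each chosen diagram $w$, translate the resulting degree-$2$ Jacobi diagrams with two trivalent vertices into explicit tensors in $(H^*)^{\otimes 3}\otimes H^{\otimes 2}$ under the identification $A_{2,2}(n)\cong V_{0,2}$, and then check that the contraction maps land on nonzero vectors spanning the various Young-symmetry types — including getting the multiplicity of $V_{1,2}$ right. There is also a small subtlety at the boundary of the stable range: since we assume $n\ge 4$ throughout, all six summands are present and one should make sure each contraction argument uses indices available for $n=4$ (e.g. introducing auxiliary indices only up to $n$). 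A cleaner alternative, which I would mention as a shortcut, is to invoke Proposition \ref{imageboundary2} and the long exact sequence \eqref{H1A21} together: since $\Cok(\partial_2')\hookrightarrow H_1(\IA_n,A_{2,1}(n))$ and the latter also fits into \eqref{H1A21shortex}, one might instead compare Euler characteristics / total multiplicities, but the direct detection via contraction maps is the most transparent route and parallels the proof already given, so that is the approach I would carry out in detail.
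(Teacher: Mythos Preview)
Your plan is correct and follows the same route as the paper: compute $\partial'_2$ on explicit abelian $2$-cycles built from a Jacobi diagram in $A_{2,1}(n)$, then apply the contraction maps $c_{1^2,21},c_{1^2,3},c_{1,1^2},c_{1,2},c'_{1,2},c_{0,1}$ (composed with $E_{k,l}-\id$ as needed) to detect each irreducible summand in \eqref{decompH1A22}. The paper carries this out with a single diagram $u$ (the tripod with legs on arcs $1,2,3$) and, for the delicate multiplicity-two piece $V_{1,2}^{\oplus 2}$, introduces a third abelian cycle $\alpha_{2,0}$ coming from the commuting pair $(f_{1,2,3},\,g_{4,1}g_{4,2}g_{4,3})$ rather than the pair $(g_{2,1},g_{4,3})$ you propose; the two values $c_{1,2}\iota'\partial'_2([u]\otimes\alpha_{0,2})$ and $c'_{1,2}\iota'\partial'_2([u]\otimes\alpha_{0,2})$ together with the same contractions applied to $[u]\otimes\alpha_{2,0}$ give a $2\times 2$ system of full rank. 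So the only adjustment you may need is to allow yourself abelian cycles beyond $\alpha_{0,2}$ and $\alpha_{0,1^2}$ when hunting for the second copy of $V_{1,2}$---your ``two linearly independent cycles'' remark already anticipates this. The Euler-characteristic shortcut you mention at the end does not actually work here, since $H_2(\IA_n,A_{2,1}(n)/A_{2,2}(n))$ is not known.
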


\begin{proof}
 Set 
 \begin{gather*}
 u:=\scalebox{0.7}{$\centre{
\begingroup%
  \makeatletter%
  \providecommand\color[2][]{%
    \errmessage{(Inkscape) Color is used for the text in Inkscape, but the package 'color.sty' is not loaded}%
    \renewcommand\color[2][]{}%
  }%
  \providecommand\transparent[1]{%
    \errmessage{(Inkscape) Transparency is used (non-zero) for the text in Inkscape, but the package 'transparent.sty' is not loaded}%
    \renewcommand\transparent[1]{}%
  }%
  \providecommand\rotatebox[2]{#2}%
  \newcommand*\fsize{\dimexpr\f@size pt\relax}%
  \newcommand*\lineheight[1]{\fontsize{\fsize}{#1\fsize}\selectfont}%
  \ifx\svgwidth\undefined%
    \setlength{\unitlength}{151.47484009bp}%
    \ifx\svgscale\undefined%
      \relax%
    \else%
      \setlength{\unitlength}{\unitlength * \real{\svgscale}}%
    \fi%
  \else%
    \setlength{\unitlength}{\svgwidth}%
  \fi%
  \global\let\svgwidth\undefined%
  \global\let\svgscale\undefined%
  \makeatother%
  \begin{picture}(1,0.20576811)%
    \lineheight{1}%
    \setlength\tabcolsep{0pt}%
    \put(0,0){\includegraphics[width=\unitlength,page=1]{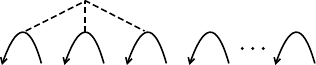}}%
  \end{picture}%
\endgroup%
}$}\in A_2(n).
 \end{gather*}
 We have
 \begin{gather*}
     \begin{split}
         \partial'_2([u]_{A_{2,2}(n)}\otimes \alpha_{0,2})
         &= [u,g_{2,1}]\otimes [g_{3,1} g_{3,2}]_{[\IA_n,\IA_n]}\\
         &=(e_1^*\cdot e_3^*)\otimes (e_1^*\otimes (e_3\wedge e_1)+e_2^*\otimes (e_3\wedge e_2))\\
         &\in H_1(\IA_n,A_{2,2}(n)).
     \end{split}
 \end{gather*}

 The image of $\partial'_2$ contains $V_{1^2,21}\oplus V_{1^2,3}\oplus V_{1,1^2}\oplus V_{0,1}$ since we have
 \begin{gather*}
     \begin{split}
         (E_{3,2}-\id)(E_{n,1}-\id)
         c_{1^2,21}\iota'\partial'_2([u]_{A_{2,2}(n)}\otimes \alpha_{0,2})&=\frac{1}{2}e_1^*\otimes (e_1^*\wedge e_2^*)\otimes (e_3\wedge e_n)\\
         (E_{3,1}-\id)(E_{n,1}-\id) c_{1^2,3}\iota'\partial'_2([u]_{A_{2,2}(n)}\otimes \alpha_{0,2})&=-(e_1^*\otimes e_1^*\otimes e_1^*)\otimes (e_3\wedge e_n)\\
         (E_{n,2}-\id) c_{1,1^2}\iota'\partial'_2([u]_{A_{2,2}(n)}\otimes \alpha_{0,2})&=\frac{1}{4}(e_1^*\wedge e_2^*)\otimes e_n\\
         c_{0,1}\iota'\partial'_2([u]_{A_{2,2}(n)}\otimes \alpha_{0,2})&=\frac{1}{2}e_1^*.
     \end{split}
 \end{gather*}

 We will use $\partial'_2 ([u]_{A_{2,2}(n)}\otimes \alpha_{2,0})$ to detect $V_{1,2}^{\oplus 2}$ in $\im(\partial'_2)$.
 We have
 \begin{gather*}
 \begin{split}
      \partial'_2([u]_{A_{2,2}(n)}\otimes \alpha_{2,0}) 
      &=[u,f_{1,2,3}]\otimes [g_{4,1}g_{4,2}g_{4,3}]_{[\IA_n,\IA_n]}\\
      &=-(e_3^* \cdot e_3^*)\otimes(e_1^*\otimes (e_4\wedge e_1)+e_2^*\otimes (e_4\wedge e_2)+e_3^*\otimes (e_4\wedge e_3)).
 \end{split} 
 \end{gather*}
 Then the image of $\partial'_2$ contains $V_{1,2}^{\oplus 2}$ since we have
 \begin{gather*}
     \begin{split}
          (E_{3,1}-\id) c_{1,2}\iota'\partial'_2([u]_{A_{2,2}(n)}\otimes \alpha_{0,2})&=\frac{1}{2}e_1^*\otimes e_1^*\otimes e_3\\
         (E_{3,1}-\id) c'_{1,2}\iota'\partial'_2([u]_{A_{2,2}(n)}\otimes \alpha_{0,2})&=- e_1^*\otimes e_1^*\otimes e_3\\
          c_{1,2}\iota'\partial'_2([u]_{A_{2,2}(n)}\otimes \alpha_{2,0})&=\frac{1}{2}e_3^*\otimes e_3^*\otimes e_4\\
          c'_{1,2}\iota'\partial'_2([u]_{A_{2,2}(n)}\otimes \alpha_{2,0})&=-\frac{3}{2}e_3^*\otimes e_3^*\otimes e_4.
     \end{split}
 \end{gather*}
 Therefore, $\partial'_2$ is surjective, and thus we have $\Cok(\partial'_2)=0$.
\end{proof}

\begin{corollary}\label{pi}
    For $n\ge 4$, the map $\pi_*$ is injective.
\end{corollary}

By combining the short exact sequence \eqref{H1A21shortex} with Lemma \ref{imageboundary2'} and with the computations \eqref{decompH1A21/A22} and \eqref{imageboundary1'}, we obtain the $\GL(n,\Z)$-module structure of $H_1(\IA_n,A_{2,1}(n))$ for $n\ge 4$.

\begin{theorem}\label{thmH1A21}
For $n\ge 4$, we have
\begin{gather*}
H_1(\IA_n,A_{2,1}(n))\cong \ker(\partial'_1)\cong
        \begin{cases}
          V_{1^2,1^4}\oplus V_{1^2,21^2}\oplus V_{1,1^3}^{\oplus 2}\oplus V_{1,21}\oplus V_{0,1^2}^{\oplus 2} & n\ge 6\\
          V_{1^2,21^2}\oplus V_{1,1^3}^{\oplus 2}\oplus V_{1,21}\oplus V_{0,1^2}^{\oplus 2} & n=5\\
          V_{1,1^3}\oplus V_{1,21}\oplus V_{0,1^2}^{\oplus 2} & n=4.
        \end{cases}
\end{gather*}
\end{theorem}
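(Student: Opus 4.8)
The plan is to read off $H_1(\IA_n,A_{2,1}(n))$ for $n\ge 4$ directly from the short exact sequence of $\GL(n,\Z)$-representations
\begin{gather*}
0\to \Cok(\partial'_2)\to H_1(\IA_n,A_{2,1}(n))\xrightarrow{\pi_*} \ker(\partial'_1)\to 0
\end{gather*}
obtained from the long exact sequence \eqref{H1A21}. By Lemma \ref{imageboundary2'} the left-hand term vanishes, so $\pi_*$ is an isomorphism onto $\ker(\partial'_1)$, and it remains only to identify $\ker(\partial'_1)$ as a $\GL(n,\Z)$-representation.

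For this, first I would invoke the computation \eqref{decompH1A21/A22} giving the full irreducible decomposition of $H_1(\IA_n,A_{2,1}(n)/A_{2,2}(n))$ (the source of $\partial'_1$) into algebraic $\GL(n,\Z)$-representations, for each range $n\ge 6$, $n=5$, $n=4$. Then I would use \eqref{imageboundary1'}, which says $\im(\partial'_1)\cong V_{0,2}$. Since all representations appearing in \eqref{decompH1A21/A22} are algebraic and hence $\GL(n,\Z)$ acts semisimply (these restrict from completely reducible algebraic $\GL(n,\Q)$-representations), $\ker(\partial'_1)$ is simply the complementary summand: we remove exactly one copy of $V_{0,2}$ from the decomposition \eqref{decompH1A21/A22}. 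Inspecting that decomposition in each range, $V_{0,2}$ appears with multiplicity one, so $\ker(\partial'_1)$ is $V_{1^2,1^4}\oplus V_{1^2,21^2}\oplus V_{1,1^3}^{\oplus 2}\oplus V_{1,21}\oplus V_{0,1^2}^{\oplus 2}$ for $n\ge 6$, and the stated truncations for $n=5$ and $n=4$; for $n=3$ the theorem makes no claim (indeed $n\ge 4$ is assumed throughout this section).

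The only subtlety is the well-definedness of ``removing one copy of $V_{0,2}$'': since $H_1(\IA_n,A_{2,1}(n)/A_{2,2}(n))$ is a completely reducible $\GL(n,\Z)$-module, any submodule is a direct sum of a subcollection of its isotypic pieces up to isomorphism, so the isomorphism type of $\ker(\partial'_1)$ depends only on the multiplicity of each irreducible in $\im(\partial'_1)$; by \eqref{imageboundary1'} this is the multiplicity of $V_{0,2}$, namely one, and zero for every other irreducible. Hence the isomorphism type of $\ker(\partial'_1)$ is forced. Combining with $\Cok(\partial'_2)=0$ from Lemma \ref{imageboundary2'} (equivalently Corollary \ref{pi}, that $\pi_*$ is injective) and with exactness of \eqref{H1A21shortex} then yields the claimed formula. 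I do not anticipate a genuine obstacle here: all the real work — the representation-theoretic bookkeeping \eqref{decompH1A21/A22}, the surjectivity of $\partial'_2$ (Lemma \ref{imageboundary2'}), and the identification $\im(\partial'_1)\cong V_{0,2}$ \eqref{imageboundary1'} — has already been carried out, and this theorem is the assembly step.
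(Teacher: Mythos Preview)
Your proposal is correct and follows essentially the same approach as the paper: the paper's proof is precisely the assembly of the short exact sequence \eqref{H1A21shortex}, Lemma \ref{imageboundary2'}, and the computations \eqref{decompH1A21/A22} and \eqref{imageboundary1'}, exactly as you describe.
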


\section{The first homology of $\IA_n$ with coefficients in $A''_2(n)$ for $n\ge 4$}\label{secH1A''2}

Here we will determine the $\GL(n,\Z)$-module structure of $H_1(\IA_n,A''_2(n))$ for $n\ge 4$. 

\begin{theorem}\label{thmH1A2''}
For $n\ge 4$, we have
\begin{gather*}
  \begin{split}
        H_1(\IA_n, A''_2(n))\cong 
        \begin{cases}
        V_{1^2,2^21}\oplus V_{1^2,32}\oplus V_{1,21^2}\oplus V_{1,2^2}^{\oplus 2} \oplus V_{1,31}\oplus V_{0,21}^{\oplus 2} & n\ge 5\\
        V_{1^2,32}\oplus V_{1,21^2}\oplus V_{1,2^2}^{\oplus 2} \oplus V_{1,31}\oplus V_{0,21}^{\oplus 2} & n= 4.
        \end{cases}
  \end{split} 
\end{gather*}
\end{theorem}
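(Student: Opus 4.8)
The strategy is to run the long exact sequence in homology attached to the short exact sequence of $\Aut(F_n)$-modules
\begin{gather*}
    0\to A_{2,1}(n)\to A''_2(n)\to A''_2(n)/A_{2,1}(n)\to 0,
\end{gather*}
which gives, near the bottom,
\begin{gather*}
    \cdots\to H_1(\IA_n,A_{2,1}(n))\xrightarrow{j_*} H_1(\IA_n,A''_2(n))\xrightarrow{q_*} H_1(\IA_n,A''_2(n)/A_{2,1}(n))\xrightarrow{\partial} H_0(\IA_n,A_{2,1}(n))\to\cdots.
\end{gather*}
First I would identify the low-degree terms: $H_0(\IA_n,A_{2,1}(n))\cong A_{2,1}(n)/A_{2,2}(n)\cong V_{0,1^3}$ by Lemma \ref{bracketmap} (the bracket map raises the trivalent-vertex filtration), and $H_0(\IA_n,A''_2(n))\cong H_0(\IA_n,A''_2(n)/A_{2,1}(n))\cong A''_2(n)/A_{2,1}(n)\cong V_{0,2^2}$; exactness at the $H_0$ stage forces $\im(\partial)\cong V_{0,1^3}$, so $\ker(\partial)$ inside $H_1(\IA_n,A''_2(n)/A_{2,1}(n))$ is the decomposition \eqref{decompH1A''2/A21} with exactly one copy of $V_{0,1^3}$ removed. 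This yields the short exact sequence
\begin{gather*}
    0\to \Cok(j_*)\to H_1(\IA_n,A''_2(n))\to \ker(\partial)\to 0,
\end{gather*}
so it remains to understand the image of $j_*\colon H_1(\IA_n,A_{2,1}(n))\to H_1(\IA_n,A''_2(n))$, whose source is the known module from Theorem \ref{thmH1A21}.

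The key point will be to compare the two known lists. The candidate answer is obtained by adding $\ker(\partial)$ (that is, \eqref{decompH1A''2/A21} minus one $V_{0,1^3}$) to $\im(j_*)$, and then observing that $\im(j_*)$ must contribute exactly the missing summands. Concretely: $\ker(\partial)$ already supplies $V_{1^2,2^21}\oplus V_{1^2,32}\oplus V_{1,21^2}\oplus V_{1,2^2}^{\oplus 2}\oplus V_{1,31}\oplus V_{0,21}^{\oplus 2}$ for $n\ge 5$ (and the evident truncation for $n=4$), which is already the claimed total. Hence the real content is to show $\im(j_*)$ adds nothing new, i.e. $j_*$ is the zero map, equivalently $\partial_2\colon H_2(\IA_n,A''_2(n)/A_{2,1}(n))\to H_1(\IA_n,A_{2,1}(n))$ in the same long exact sequence is surjective (so that $H_1(\IA_n,A_{2,1}(n))\to H_1(\IA_n,A''_2(n))$ has trivial image). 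This is exactly parallel to Lemma \ref{imageboundary2'}, and I would prove it the same way: exhibit explicit abelian $2$-cycles of $\IA_n$ (built from the commuting pairs $g_{2,1},g_{3,1}g_{3,2}$ and $g_{2,1},g_{4,3}$ as in Section \ref{subsecAbeliancycle}, together with $\alpha_{2,0}$-type cycles) paired with explicitly chosen elements of $A''_2(n)/A_{2,1}(n)$, compute $\partial_2$ via the bracket map, and apply the contraction maps $c_{1^2,1^4},\dots,c_{0,1^2},c'_{0,1^2}$ of Section \ref{subseccontraction} to hit every irreducible summand of $H_1(\IA_n,A_{2,1}(n))$ listed in Theorem \ref{thmH1A21}. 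A dimension/multiplicity count against \eqref{decompH1A21/A22} and Proposition \ref{imageboundary2} then certifies surjectivity.

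Once $j_*=0$ is established, $\Cok(j_*)=H_1(\IA_n,A_{2,1}(n))$ sits as a submodule of $H_1(\IA_n,A''_2(n))$; but comparing with Theorem \ref{thmH1A''2/A22} via the earlier short exact sequence \eqref{H1A2''new}, namely $H_1(\IA_n,A''_2(n))\twoheadrightarrow H_1(\IA_n,A''_2(n)/A_{2,2}(n))\to V_{0,2}\to 0$ is exact on the right but the cokernel of the first map is controlled — here I would instead read off the answer directly from the short exact sequence above: $H_1(\IA_n,A''_2(n))\cong \ker(\partial)$ because $\Cok(j_*)$ is forced to vanish by matching against the surjection in \eqref{H1A2''new} and Theorem \ref{thmH1A''2/A22}, which already pins down every multiplicity. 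Finally I would note the sequence splits as $\GL(n,\Z)$-representations since all the irreducibles occurring are distinct (or their multiplicities are accounted for), giving the stated isomorphism. The main obstacle is the surjectivity of $\partial_2$: choosing the right Jacobi-diagram elements $v$ so that the brackets $[v,g_{2,1}]$, $[v,g_{4,3}]$, $[v,f_{1,2,3}]$ land in $A_{2,1}(n)$ with enough independent components to detect both copies of $V_{1,1^3}$ and both copies of $V_{0,1^2}$ simultaneously, exactly as in the proofs of Proposition \ref{imageboundary2} and Lemma \ref{imageboundary2'}.
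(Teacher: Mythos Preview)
Your overall strategy matches the paper's: use the long exact sequence for $0\to A_{2,1}(n)\to A''_2(n)\to A''_2(n)/A_{2,1}(n)\to 0$, identify $\im(\partial''_1)\cong V_{0,1^3}$, and show the connecting map $\partial''_2$ (your ``$\partial_2$'') is surjective so that $H_1(\IA_n,A''_2(n))\cong\ker(\partial''_1)$. However, there is a persistent notational slip: the short exact sequence extracted from the long exact sequence is
\[
0\to\Cok(\partial''_2)\to H_1(\IA_n,A''_2(n))\to\ker(\partial''_1)\to 0,
\]
and $\Cok(\partial''_2)=\im(j_*)$, not $\Cok(j_*)$. Your final paragraph, where you write that ``$\Cok(j_*)=H_1(\IA_n,A_{2,1}(n))$ sits as a submodule of $H_1(\IA_n,A''_2(n))$'' once $j_*=0$, is incoherent as written (if $j_*=0$ then $\im(j_*)=0$, and $\Cok(j_*)$ would be all of $H_1(\IA_n,A''_2(n))$), which suggests genuine confusion rather than a typo.

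More importantly, you miss the shortcut the paper takes to prove surjectivity of $\partial''_2$. No new abelian-cycle computations are needed: the paper observes the commutative triangle $\partial_2=\pi_*\circ\partial''_2$, where $\partial_2$ is the connecting map of \eqref{H1A2''/A22} (already analyzed in Proposition \ref{imageboundary2}) and $\pi_*\colon H_1(\IA_n,A_{2,1}(n))\to H_1(\IA_n,A_{2,1}(n)/A_{2,2}(n))$ is injective for $n\ge 4$ by Corollary \ref{pi}. Hence $\im(\partial''_2)\cong\im(\partial_2)$, and Proposition \ref{imageboundary2} together with Theorem \ref{thmH1A21} shows $\im(\partial_2)$ already exhausts $H_1(\IA_n,A_{2,1}(n))\cong\ker(\partial'_1)$, so $\partial''_2$ is onto. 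Your proposal to redo explicit bracket-and-contraction computations would in principle work, but the contraction maps of Section \ref{subseccontraction} are defined on $H_1(\IA_n,A_{2,1}(n)/A_{2,2}(n))$, not on $H_1(\IA_n,A_{2,1}(n))$ itself; applying them forces you to first push forward by $\pi_*$, at which point you are literally computing $\partial_2$ and relying on the same commutative triangle the paper makes explicit.
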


\begin{proof}
The short exact sequence
\begin{gather*}
    0\to A_{2,1}(n)\to A''_2(n)\to A''_2(n)/A_{2,1}(n)\to 0
\end{gather*}
induces a long exact sequence
\begin{gather}\label{H1A2''full}
\begin{split}
    &\quad\quad\quad\quad\quad\cdots\quad\quad\;\;\;\;\to 
    H_2(\IA_n, A''_2(n))\to 
    H_2(\IA_n, A''_2(n)/A_{2,1}(n))\\
    &\xrightarrow{\partial''_2} 
    H_1(\IA_n, A_{2,1}(n))\to  
    H_1(\IA_n, A''_2(n))\to 
    H_1(\IA_n, A''_2(n)/A_{2,1}(n))\\
    &\xrightarrow{\partial''_1}
    H_0(\IA_n, A_{2,1}(n))\to  
    H_0(\IA_n, A''_2(n))\to 
    H_0(\IA_n, A''_2(n)/A_{2,1}(n))\to 0.
\end{split}
\end{gather}
Since we have
\begin{gather*}
        H_0((\IA_n, A_{2,1}(n))\cong A_{2,1}(n)/A_{2,2}(n)\cong V_{0,1^3},\\
        H_0(\IA_n, A''_2(n))\cong H_0(\IA_n, A''_2(n)/A_{2,1}(n))\cong A''_2(n)/A_{2,1}(n),
\end{gather*}
it follows that
\begin{gather}\label{imageboundary''1}
    \im (\partial''_1)\cong V_{0,1^3}.
\end{gather}

By the long exact sequence \eqref{H1A2''full}, we have the following short exact sequence of $\GL(n,\Z)$-representations
\begin{gather}\label{H1A2''shortex}
0\to \Cok(\partial''_2)\to H_1(\IA_n,A''_2(n))\to \ker(\partial''_1)\to 0.  
\end{gather}

Note that we have the following commutative diagram
\begin{gather}\label{commutativediagrampartial2}
    \xymatrix{
    H_2(\IA_n, A''_2(n)/A_{2,1}(n))\ar[rr]^{\partial_2}\ar[rd]^{\partial''_2}&& H_1(\IA_n,A_{2,1}(n)/A_{2,2}(n))
    \\
    &H_1(\IA_n, A_{2,1}(n))\ar[ru]^{\pi_*}}
\end{gather}
where $\partial_2$ appeared in \eqref{H1A2''/A22} and $\pi_*$ appeared in \eqref{H1A21}.
Therefore, we have
\begin{gather*}
    \im (\partial_2)=\im (\pi_* \partial''_2).
\end{gather*}
Since $\pi_*$ is injective by Corollary \ref{pi}, we have 
\begin{gather*}
    \im (\pi_* \partial''_2)\cong \im (\partial''_2).
\end{gather*}
Hence, by Proposition \ref{imageboundary2} and Theorem \ref{thmH1A21}, the boundary map $\partial''_2$ is surjective for $n\ge 4$.
It follows from \eqref{decompH1A''2/A21}, \eqref{H1A2''shortex} and \eqref{imageboundary''1} that 
\begin{gather*}
  \begin{split}
        H_1(\IA_n, A''_2(n))&\cong \ker (\partial''_1)\\
        &\cong 
        \begin{cases}
        V_{1^2,2^21}\oplus V_{1^2,32}\oplus V_{1,21^2}\oplus V_{1,2^2}^{\oplus 2} \oplus V_{1,31}\oplus V_{0,21}^{\oplus 2} & n\ge 5\\
        V_{1^2,32}\oplus V_{1,21^2}\oplus V_{1,2^2}^{\oplus 2} \oplus V_{1,31}\oplus V_{0,21}^{\oplus 2} & n= 4.
        \end{cases}
  \end{split} 
\end{gather*}    
\end{proof}

\section{Self duality of $A''_2(3)$}\label{secselfdual}

In this section, we will prove that $A''_2(3)$ is a self-dual $\Aut(F_3)$-module, which appeared in our previous paper \cite[Remark 7.13]{Katada1} as a conjecture.

\subsection{Basis for $A''_2(3)$}\label{subsecbasis}

Since $\dim V_{0,2^2}(3)=6$, $\dim V_{0,1^3}(3)=1$ and $\dim V_{0,2}(3)=6$, it follows that 
$\dim A''_2(3)=13$.
Moreover, we can take the following basis for $A''_2(3)$:
\begin{gather*}
    \begin{split}
        v_1&=\scalebox{0.8}{$\centre{
\begingroup%
  \makeatletter%
  \providecommand\color[2][]{%
    \errmessage{(Inkscape) Color is used for the text in Inkscape, but the package 'color.sty' is not loaded}%
    \renewcommand\color[2][]{}%
  }%
  \providecommand\transparent[1]{%
    \errmessage{(Inkscape) Transparency is used (non-zero) for the text in Inkscape, but the package 'transparent.sty' is not loaded}%
    \renewcommand\transparent[1]{}%
  }%
  \providecommand\rotatebox[2]{#2}%
  \newcommand*\fsize{\dimexpr\f@size pt\relax}%
  \newcommand*\lineheight[1]{\fontsize{\fsize}{#1\fsize}\selectfont}%
  \ifx\svgwidth\undefined%
    \setlength{\unitlength}{80.22484197bp}%
    \ifx\svgscale\undefined%
      \relax%
    \else%
      \setlength{\unitlength}{\unitlength * \real{\svgscale}}%
    \fi%
  \else%
    \setlength{\unitlength}{\svgwidth}%
  \fi%
  \global\let\svgwidth\undefined%
  \global\let\svgscale\undefined%
  \makeatother%
  \begin{picture}(1,0.34317237)%
    \lineheight{1}%
    \setlength\tabcolsep{0pt}%
    \put(0,0){\includegraphics[width=\unitlength,page=1]{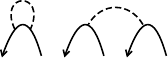}}%
  \end{picture}%
\endgroup%
}$}-\frac{1}{2}\scalebox{0.8}{$\centre{
\begingroup%
  \makeatletter%
  \providecommand\color[2][]{%
    \errmessage{(Inkscape) Color is used for the text in Inkscape, but the package 'color.sty' is not loaded}%
    \renewcommand\color[2][]{}%
  }%
  \providecommand\transparent[1]{%
    \errmessage{(Inkscape) Transparency is used (non-zero) for the text in Inkscape, but the package 'transparent.sty' is not loaded}%
    \renewcommand\transparent[1]{}%
  }%
  \providecommand\rotatebox[2]{#2}%
  \newcommand*\fsize{\dimexpr\f@size pt\relax}%
  \newcommand*\lineheight[1]{\fontsize{\fsize}{#1\fsize}\selectfont}%
  \ifx\svgwidth\undefined%
    \setlength{\unitlength}{80.22484197bp}%
    \ifx\svgscale\undefined%
      \relax%
    \else%
      \setlength{\unitlength}{\unitlength * \real{\svgscale}}%
    \fi%
  \else%
    \setlength{\unitlength}{\svgwidth}%
  \fi%
  \global\let\svgwidth\undefined%
  \global\let\svgscale\undefined%
  \makeatother%
  \begin{picture}(1,0.37608811)%
    \lineheight{1}%
    \setlength\tabcolsep{0pt}%
    \put(0,0){\includegraphics[width=\unitlength,page=1]{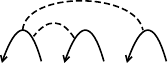}}%
  \end{picture}%
\endgroup%
}$}-\frac{1}{2}\scalebox{0.8}{$\centre{
\begingroup%
  \makeatletter%
  \providecommand\color[2][]{%
    \errmessage{(Inkscape) Color is used for the text in Inkscape, but the package 'color.sty' is not loaded}%
    \renewcommand\color[2][]{}%
  }%
  \providecommand\transparent[1]{%
    \errmessage{(Inkscape) Transparency is used (non-zero) for the text in Inkscape, but the package 'transparent.sty' is not loaded}%
    \renewcommand\transparent[1]{}%
  }%
  \providecommand\rotatebox[2]{#2}%
  \newcommand*\fsize{\dimexpr\f@size pt\relax}%
  \newcommand*\lineheight[1]{\fontsize{\fsize}{#1\fsize}\selectfont}%
  \ifx\svgwidth\undefined%
    \setlength{\unitlength}{80.22484197bp}%
    \ifx\svgscale\undefined%
      \relax%
    \else%
      \setlength{\unitlength}{\unitlength * \real{\svgscale}}%
    \fi%
  \else%
    \setlength{\unitlength}{\svgwidth}%
  \fi%
  \global\let\svgwidth\undefined%
  \global\let\svgscale\undefined%
  \makeatother%
  \begin{picture}(1,0.35443458)%
    \lineheight{1}%
    \setlength\tabcolsep{0pt}%
    \put(0,0){\includegraphics[width=\unitlength,page=1]{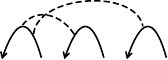}}%
  \end{picture}%
\endgroup%
}$},\\
        v_2&=\scalebox{0.8}{$\centre{
\begingroup%
  \makeatletter%
  \providecommand\color[2][]{%
    \errmessage{(Inkscape) Color is used for the text in Inkscape, but the package 'color.sty' is not loaded}%
    \renewcommand\color[2][]{}%
  }%
  \providecommand\transparent[1]{%
    \errmessage{(Inkscape) Transparency is used (non-zero) for the text in Inkscape, but the package 'transparent.sty' is not loaded}%
    \renewcommand\transparent[1]{}%
  }%
  \providecommand\rotatebox[2]{#2}%
  \newcommand*\fsize{\dimexpr\f@size pt\relax}%
  \newcommand*\lineheight[1]{\fontsize{\fsize}{#1\fsize}\selectfont}%
  \ifx\svgwidth\undefined%
    \setlength{\unitlength}{80.22484197bp}%
    \ifx\svgscale\undefined%
      \relax%
    \else%
      \setlength{\unitlength}{\unitlength * \real{\svgscale}}%
    \fi%
  \else%
    \setlength{\unitlength}{\svgwidth}%
  \fi%
  \global\let\svgwidth\undefined%
  \global\let\svgscale\undefined%
  \makeatother%
  \begin{picture}(1,0.37750979)%
    \lineheight{1}%
    \setlength\tabcolsep{0pt}%
    \put(0,0){\includegraphics[width=\unitlength,page=1]{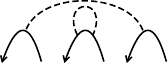}}%
  \end{picture}%
\endgroup%
}$}-\frac{1}{2}\scalebox{0.8}{$\centre{
\begingroup%
  \makeatletter%
  \providecommand\color[2][]{%
    \errmessage{(Inkscape) Color is used for the text in Inkscape, but the package 'color.sty' is not loaded}%
    \renewcommand\color[2][]{}%
  }%
  \providecommand\transparent[1]{%
    \errmessage{(Inkscape) Transparency is used (non-zero) for the text in Inkscape, but the package 'transparent.sty' is not loaded}%
    \renewcommand\transparent[1]{}%
  }%
  \providecommand\rotatebox[2]{#2}%
  \newcommand*\fsize{\dimexpr\f@size pt\relax}%
  \newcommand*\lineheight[1]{\fontsize{\fsize}{#1\fsize}\selectfont}%
  \ifx\svgwidth\undefined%
    \setlength{\unitlength}{80.22484197bp}%
    \ifx\svgscale\undefined%
      \relax%
    \else%
      \setlength{\unitlength}{\unitlength * \real{\svgscale}}%
    \fi%
  \else%
    \setlength{\unitlength}{\svgwidth}%
  \fi%
  \global\let\svgwidth\undefined%
  \global\let\svgscale\undefined%
  \makeatother%
  \begin{picture}(1,0.28950132)%
    \lineheight{1}%
    \setlength\tabcolsep{0pt}%
    \put(0,0){\includegraphics[width=\unitlength,page=1]{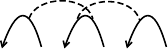}}%
  \end{picture}%
\endgroup%
}$}-\frac{1}{2}\scalebox{0.8}{$\centre{
\begingroup%
  \makeatletter%
  \providecommand\color[2][]{%
    \errmessage{(Inkscape) Color is used for the text in Inkscape, but the package 'color.sty' is not loaded}%
    \renewcommand\color[2][]{}%
  }%
  \providecommand\transparent[1]{%
    \errmessage{(Inkscape) Transparency is used (non-zero) for the text in Inkscape, but the package 'transparent.sty' is not loaded}%
    \renewcommand\transparent[1]{}%
  }%
  \providecommand\rotatebox[2]{#2}%
  \newcommand*\fsize{\dimexpr\f@size pt\relax}%
  \newcommand*\lineheight[1]{\fontsize{\fsize}{#1\fsize}\selectfont}%
  \ifx\svgwidth\undefined%
    \setlength{\unitlength}{80.22484197bp}%
    \ifx\svgscale\undefined%
      \relax%
    \else%
      \setlength{\unitlength}{\unitlength * \real{\svgscale}}%
    \fi%
  \else%
    \setlength{\unitlength}{\svgwidth}%
  \fi%
  \global\let\svgwidth\undefined%
  \global\let\svgscale\undefined%
  \makeatother%
  \begin{picture}(1,0.28487196)%
    \lineheight{1}%
    \setlength\tabcolsep{0pt}%
    \put(0,0){\includegraphics[width=\unitlength,page=1]{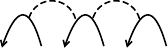}}%
  \end{picture}%
\endgroup%
}$},\\
        v_3&=\scalebox{0.8}{$\centre{
\begingroup%
  \makeatletter%
  \providecommand\color[2][]{%
    \errmessage{(Inkscape) Color is used for the text in Inkscape, but the package 'color.sty' is not loaded}%
    \renewcommand\color[2][]{}%
  }%
  \providecommand\transparent[1]{%
    \errmessage{(Inkscape) Transparency is used (non-zero) for the text in Inkscape, but the package 'transparent.sty' is not loaded}%
    \renewcommand\transparent[1]{}%
  }%
  \providecommand\rotatebox[2]{#2}%
  \newcommand*\fsize{\dimexpr\f@size pt\relax}%
  \newcommand*\lineheight[1]{\fontsize{\fsize}{#1\fsize}\selectfont}%
  \ifx\svgwidth\undefined%
    \setlength{\unitlength}{80.22484197bp}%
    \ifx\svgscale\undefined%
      \relax%
    \else%
      \setlength{\unitlength}{\unitlength * \real{\svgscale}}%
    \fi%
  \else%
    \setlength{\unitlength}{\svgwidth}%
  \fi%
  \global\let\svgwidth\undefined%
  \global\let\svgscale\undefined%
  \makeatother%
  \begin{picture}(1,0.37291581)%
    \lineheight{1}%
    \setlength\tabcolsep{0pt}%
    \put(0,0){\includegraphics[width=\unitlength,page=1]{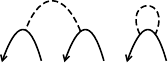}}%
  \end{picture}%
\endgroup%
}$}-\frac{1}{2}\scalebox{0.8}{$\centre{
\begingroup%
  \makeatletter%
  \providecommand\color[2][]{%
    \errmessage{(Inkscape) Color is used for the text in Inkscape, but the package 'color.sty' is not loaded}%
    \renewcommand\color[2][]{}%
  }%
  \providecommand\transparent[1]{%
    \errmessage{(Inkscape) Transparency is used (non-zero) for the text in Inkscape, but the package 'transparent.sty' is not loaded}%
    \renewcommand\transparent[1]{}%
  }%
  \providecommand\rotatebox[2]{#2}%
  \newcommand*\fsize{\dimexpr\f@size pt\relax}%
  \newcommand*\lineheight[1]{\fontsize{\fsize}{#1\fsize}\selectfont}%
  \ifx\svgwidth\undefined%
    \setlength{\unitlength}{80.22484197bp}%
    \ifx\svgscale\undefined%
      \relax%
    \else%
      \setlength{\unitlength}{\unitlength * \real{\svgscale}}%
    \fi%
  \else%
    \setlength{\unitlength}{\svgwidth}%
  \fi%
  \global\let\svgwidth\undefined%
  \global\let\svgscale\undefined%
  \makeatother%
  \begin{picture}(1,0.33636857)%
    \lineheight{1}%
    \setlength\tabcolsep{0pt}%
    \put(0,0){\includegraphics[width=\unitlength,page=1]{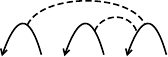}}%
  \end{picture}%
\endgroup%
}$}-\frac{1}{2}\scalebox{0.8}{$\centre{
\begingroup%
  \makeatletter%
  \providecommand\color[2][]{%
    \errmessage{(Inkscape) Color is used for the text in Inkscape, but the package 'color.sty' is not loaded}%
    \renewcommand\color[2][]{}%
  }%
  \providecommand\transparent[1]{%
    \errmessage{(Inkscape) Transparency is used (non-zero) for the text in Inkscape, but the package 'transparent.sty' is not loaded}%
    \renewcommand\transparent[1]{}%
  }%
  \providecommand\rotatebox[2]{#2}%
  \newcommand*\fsize{\dimexpr\f@size pt\relax}%
  \newcommand*\lineheight[1]{\fontsize{\fsize}{#1\fsize}\selectfont}%
  \ifx\svgwidth\undefined%
    \setlength{\unitlength}{80.22484197bp}%
    \ifx\svgscale\undefined%
      \relax%
    \else%
      \setlength{\unitlength}{\unitlength * \real{\svgscale}}%
    \fi%
  \else%
    \setlength{\unitlength}{\svgwidth}%
  \fi%
  \global\let\svgwidth\undefined%
  \global\let\svgscale\undefined%
  \makeatother%
  \begin{picture}(1,0.3142311)%
    \lineheight{1}%
    \setlength\tabcolsep{0pt}%
    \put(0,0){\includegraphics[width=\unitlength,page=1]{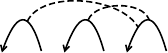}}%
  \end{picture}%
\endgroup%
}$},\\
        v_{12}&=\scalebox{0.8}{$\centre{
\begingroup%
  \makeatletter%
  \providecommand\color[2][]{%
    \errmessage{(Inkscape) Color is used for the text in Inkscape, but the package 'color.sty' is not loaded}%
    \renewcommand\color[2][]{}%
  }%
  \providecommand\transparent[1]{%
    \errmessage{(Inkscape) Transparency is used (non-zero) for the text in Inkscape, but the package 'transparent.sty' is not loaded}%
    \renewcommand\transparent[1]{}%
  }%
  \providecommand\rotatebox[2]{#2}%
  \newcommand*\fsize{\dimexpr\f@size pt\relax}%
  \newcommand*\lineheight[1]{\fontsize{\fsize}{#1\fsize}\selectfont}%
  \ifx\svgwidth\undefined%
    \setlength{\unitlength}{80.22484197bp}%
    \ifx\svgscale\undefined%
      \relax%
    \else%
      \setlength{\unitlength}{\unitlength * \real{\svgscale}}%
    \fi%
  \else%
    \setlength{\unitlength}{\svgwidth}%
  \fi%
  \global\let\svgwidth\undefined%
  \global\let\svgscale\undefined%
  \makeatother%
  \begin{picture}(1,0.34317238)%
    \lineheight{1}%
    \setlength\tabcolsep{0pt}%
    \put(0,0){\includegraphics[width=\unitlength,page=1]{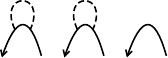}}%
  \end{picture}%
\endgroup%
}$}-\frac{1}{2}\scalebox{0.8}{$\centre{
\begingroup%
  \makeatletter%
  \providecommand\color[2][]{%
    \errmessage{(Inkscape) Color is used for the text in Inkscape, but the package 'color.sty' is not loaded}%
    \renewcommand\color[2][]{}%
  }%
  \providecommand\transparent[1]{%
    \errmessage{(Inkscape) Transparency is used (non-zero) for the text in Inkscape, but the package 'transparent.sty' is not loaded}%
    \renewcommand\transparent[1]{}%
  }%
  \providecommand\rotatebox[2]{#2}%
  \newcommand*\fsize{\dimexpr\f@size pt\relax}%
  \newcommand*\lineheight[1]{\fontsize{\fsize}{#1\fsize}\selectfont}%
  \ifx\svgwidth\undefined%
    \setlength{\unitlength}{80.22484197bp}%
    \ifx\svgscale\undefined%
      \relax%
    \else%
      \setlength{\unitlength}{\unitlength * \real{\svgscale}}%
    \fi%
  \else%
    \setlength{\unitlength}{\svgwidth}%
  \fi%
  \global\let\svgwidth\undefined%
  \global\let\svgscale\undefined%
  \makeatother%
  \begin{picture}(1,0.32120476)%
    \lineheight{1}%
    \setlength\tabcolsep{0pt}%
    \put(0,0){\includegraphics[width=\unitlength,page=1]{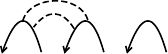}}%
  \end{picture}%
\endgroup%
}$}-\frac{1}{2}\scalebox{0.8}{$\centre{
\begingroup%
  \makeatletter%
  \providecommand\color[2][]{%
    \errmessage{(Inkscape) Color is used for the text in Inkscape, but the package 'color.sty' is not loaded}%
    \renewcommand\color[2][]{}%
  }%
  \providecommand\transparent[1]{%
    \errmessage{(Inkscape) Transparency is used (non-zero) for the text in Inkscape, but the package 'transparent.sty' is not loaded}%
    \renewcommand\transparent[1]{}%
  }%
  \providecommand\rotatebox[2]{#2}%
  \newcommand*\fsize{\dimexpr\f@size pt\relax}%
  \newcommand*\lineheight[1]{\fontsize{\fsize}{#1\fsize}\selectfont}%
  \ifx\svgwidth\undefined%
    \setlength{\unitlength}{80.22484197bp}%
    \ifx\svgscale\undefined%
      \relax%
    \else%
      \setlength{\unitlength}{\unitlength * \real{\svgscale}}%
    \fi%
  \else%
    \setlength{\unitlength}{\svgwidth}%
  \fi%
  \global\let\svgwidth\undefined%
  \global\let\svgscale\undefined%
  \makeatother%
  \begin{picture}(1,0.27113311)%
    \lineheight{1}%
    \setlength\tabcolsep{0pt}%
    \put(0,0){\includegraphics[width=\unitlength,page=1]{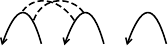}}%
  \end{picture}%
\endgroup%
}$},\\
        v_{13}&=\scalebox{0.8}{$\centre{
\begingroup%
  \makeatletter%
  \providecommand\color[2][]{%
    \errmessage{(Inkscape) Color is used for the text in Inkscape, but the package 'color.sty' is not loaded}%
    \renewcommand\color[2][]{}%
  }%
  \providecommand\transparent[1]{%
    \errmessage{(Inkscape) Transparency is used (non-zero) for the text in Inkscape, but the package 'transparent.sty' is not loaded}%
    \renewcommand\transparent[1]{}%
  }%
  \providecommand\rotatebox[2]{#2}%
  \newcommand*\fsize{\dimexpr\f@size pt\relax}%
  \newcommand*\lineheight[1]{\fontsize{\fsize}{#1\fsize}\selectfont}%
  \ifx\svgwidth\undefined%
    \setlength{\unitlength}{80.22484197bp}%
    \ifx\svgscale\undefined%
      \relax%
    \else%
      \setlength{\unitlength}{\unitlength * \real{\svgscale}}%
    \fi%
  \else%
    \setlength{\unitlength}{\svgwidth}%
  \fi%
  \global\let\svgwidth\undefined%
  \global\let\svgscale\undefined%
  \makeatother%
  \begin{picture}(1,0.34317239)%
    \lineheight{1}%
    \setlength\tabcolsep{0pt}%
    \put(0,0){\includegraphics[width=\unitlength,page=1]{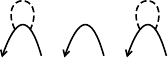}}%
  \end{picture}%
\endgroup%
}$}-\frac{1}{2}\scalebox{0.8}{$\centre{
\begingroup%
  \makeatletter%
  \providecommand\color[2][]{%
    \errmessage{(Inkscape) Color is used for the text in Inkscape, but the package 'color.sty' is not loaded}%
    \renewcommand\color[2][]{}%
  }%
  \providecommand\transparent[1]{%
    \errmessage{(Inkscape) Transparency is used (non-zero) for the text in Inkscape, but the package 'transparent.sty' is not loaded}%
    \renewcommand\transparent[1]{}%
  }%
  \providecommand\rotatebox[2]{#2}%
  \newcommand*\fsize{\dimexpr\f@size pt\relax}%
  \newcommand*\lineheight[1]{\fontsize{\fsize}{#1\fsize}\selectfont}%
  \ifx\svgwidth\undefined%
    \setlength{\unitlength}{80.22484197bp}%
    \ifx\svgscale\undefined%
      \relax%
    \else%
      \setlength{\unitlength}{\unitlength * \real{\svgscale}}%
    \fi%
  \else%
    \setlength{\unitlength}{\svgwidth}%
  \fi%
  \global\let\svgwidth\undefined%
  \global\let\svgscale\undefined%
  \makeatother%
  \begin{picture}(1,0.37608815)%
    \lineheight{1}%
    \setlength\tabcolsep{0pt}%
    \put(0,0){\includegraphics[width=\unitlength,page=1]{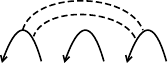}}%
  \end{picture}%
\endgroup%
}$}-\frac{1}{2}\scalebox{0.8}{$\centre{
\begingroup%
  \makeatletter%
  \providecommand\color[2][]{%
    \errmessage{(Inkscape) Color is used for the text in Inkscape, but the package 'color.sty' is not loaded}%
    \renewcommand\color[2][]{}%
  }%
  \providecommand\transparent[1]{%
    \errmessage{(Inkscape) Transparency is used (non-zero) for the text in Inkscape, but the package 'transparent.sty' is not loaded}%
    \renewcommand\transparent[1]{}%
  }%
  \providecommand\rotatebox[2]{#2}%
  \newcommand*\fsize{\dimexpr\f@size pt\relax}%
  \newcommand*\lineheight[1]{\fontsize{\fsize}{#1\fsize}\selectfont}%
  \ifx\svgwidth\undefined%
    \setlength{\unitlength}{80.22484197bp}%
    \ifx\svgscale\undefined%
      \relax%
    \else%
      \setlength{\unitlength}{\unitlength * \real{\svgscale}}%
    \fi%
  \else%
    \setlength{\unitlength}{\svgwidth}%
  \fi%
  \global\let\svgwidth\undefined%
  \global\let\svgscale\undefined%
  \makeatother%
  \begin{picture}(1,0.29934562)%
    \lineheight{1}%
    \setlength\tabcolsep{0pt}%
    \put(0,0){\includegraphics[width=\unitlength,page=1]{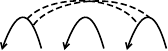}}%
  \end{picture}%
\endgroup%
}$},\\
        v_{23}&=\scalebox{0.8}{$\centre{
\begingroup%
  \makeatletter%
  \providecommand\color[2][]{%
    \errmessage{(Inkscape) Color is used for the text in Inkscape, but the package 'color.sty' is not loaded}%
    \renewcommand\color[2][]{}%
  }%
  \providecommand\transparent[1]{%
    \errmessage{(Inkscape) Transparency is used (non-zero) for the text in Inkscape, but the package 'transparent.sty' is not loaded}%
    \renewcommand\transparent[1]{}%
  }%
  \providecommand\rotatebox[2]{#2}%
  \newcommand*\fsize{\dimexpr\f@size pt\relax}%
  \newcommand*\lineheight[1]{\fontsize{\fsize}{#1\fsize}\selectfont}%
  \ifx\svgwidth\undefined%
    \setlength{\unitlength}{80.22484197bp}%
    \ifx\svgscale\undefined%
      \relax%
    \else%
      \setlength{\unitlength}{\unitlength * \real{\svgscale}}%
    \fi%
  \else%
    \setlength{\unitlength}{\svgwidth}%
  \fi%
  \global\let\svgwidth\undefined%
  \global\let\svgscale\undefined%
  \makeatother%
  \begin{picture}(1,0.34317239)%
    \lineheight{1}%
    \setlength\tabcolsep{0pt}%
    \put(0,0){\includegraphics[width=\unitlength,page=1]{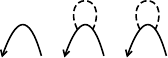}}%
  \end{picture}%
\endgroup%
}$}-\frac{1}{2}\scalebox{0.8}{$\centre{
\begingroup%
  \makeatletter%
  \providecommand\color[2][]{%
    \errmessage{(Inkscape) Color is used for the text in Inkscape, but the package 'color.sty' is not loaded}%
    \renewcommand\color[2][]{}%
  }%
  \providecommand\transparent[1]{%
    \errmessage{(Inkscape) Transparency is used (non-zero) for the text in Inkscape, but the package 'transparent.sty' is not loaded}%
    \renewcommand\transparent[1]{}%
  }%
  \providecommand\rotatebox[2]{#2}%
  \newcommand*\fsize{\dimexpr\f@size pt\relax}%
  \newcommand*\lineheight[1]{\fontsize{\fsize}{#1\fsize}\selectfont}%
  \ifx\svgwidth\undefined%
    \setlength{\unitlength}{80.22484197bp}%
    \ifx\svgscale\undefined%
      \relax%
    \else%
      \setlength{\unitlength}{\unitlength * \real{\svgscale}}%
    \fi%
  \else%
    \setlength{\unitlength}{\svgwidth}%
  \fi%
  \global\let\svgwidth\undefined%
  \global\let\svgscale\undefined%
  \makeatother%
  \begin{picture}(1,0.29428627)%
    \lineheight{1}%
    \setlength\tabcolsep{0pt}%
    \put(0,0){\includegraphics[width=\unitlength,page=1]{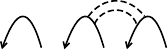}}%
  \end{picture}%
\endgroup%
}$}-\frac{1}{2}\scalebox{0.8}{$\centre{
\begingroup%
  \makeatletter%
  \providecommand\color[2][]{%
    \errmessage{(Inkscape) Color is used for the text in Inkscape, but the package 'color.sty' is not loaded}%
    \renewcommand\color[2][]{}%
  }%
  \providecommand\transparent[1]{%
    \errmessage{(Inkscape) Transparency is used (non-zero) for the text in Inkscape, but the package 'transparent.sty' is not loaded}%
    \renewcommand\transparent[1]{}%
  }%
  \providecommand\rotatebox[2]{#2}%
  \newcommand*\fsize{\dimexpr\f@size pt\relax}%
  \newcommand*\lineheight[1]{\fontsize{\fsize}{#1\fsize}\selectfont}%
  \ifx\svgwidth\undefined%
    \setlength{\unitlength}{80.22484197bp}%
    \ifx\svgscale\undefined%
      \relax%
    \else%
      \setlength{\unitlength}{\unitlength * \real{\svgscale}}%
    \fi%
  \else%
    \setlength{\unitlength}{\svgwidth}%
  \fi%
  \global\let\svgwidth\undefined%
  \global\let\svgscale\undefined%
  \makeatother%
  \begin{picture}(1,0.26850143)%
    \lineheight{1}%
    \setlength\tabcolsep{0pt}%
    \put(0,0){\includegraphics[width=\unitlength,page=1]{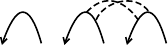}}%
  \end{picture}%
\endgroup%
}$},\\
        u_{123}&=\scalebox{0.8}{$\centre{}$},\\
        w_1&=\scalebox{0.8}{$\centre{
\begingroup%
  \makeatletter%
  \providecommand\color[2][]{%
    \errmessage{(Inkscape) Color is used for the text in Inkscape, but the package 'color.sty' is not loaded}%
    \renewcommand\color[2][]{}%
  }%
  \providecommand\transparent[1]{%
    \errmessage{(Inkscape) Transparency is used (non-zero) for the text in Inkscape, but the package 'transparent.sty' is not loaded}%
    \renewcommand\transparent[1]{}%
  }%
  \providecommand\rotatebox[2]{#2}%
  \newcommand*\fsize{\dimexpr\f@size pt\relax}%
  \newcommand*\lineheight[1]{\fontsize{\fsize}{#1\fsize}\selectfont}%
  \ifx\svgwidth\undefined%
    \setlength{\unitlength}{72.72484094bp}%
    \ifx\svgscale\undefined%
      \relax%
    \else%
      \setlength{\unitlength}{\unitlength * \real{\svgscale}}%
    \fi%
  \else%
    \setlength{\unitlength}{\svgwidth}%
  \fi%
  \global\let\svgwidth\undefined%
  \global\let\svgscale\undefined%
  \makeatother%
  \begin{picture}(1,0.47368456)%
    \lineheight{1}%
    \setlength\tabcolsep{0pt}%
    \put(0,0){\includegraphics[width=\unitlength,page=1]{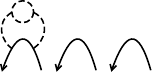}}%
  \end{picture}%
\endgroup%
}$},\quad
        w_2=\scalebox{0.8}{$\centre{
\begingroup%
  \makeatletter%
  \providecommand\color[2][]{%
    \errmessage{(Inkscape) Color is used for the text in Inkscape, but the package 'color.sty' is not loaded}%
    \renewcommand\color[2][]{}%
  }%
  \providecommand\transparent[1]{%
    \errmessage{(Inkscape) Transparency is used (non-zero) for the text in Inkscape, but the package 'transparent.sty' is not loaded}%
    \renewcommand\transparent[1]{}%
  }%
  \providecommand\rotatebox[2]{#2}%
  \newcommand*\fsize{\dimexpr\f@size pt\relax}%
  \newcommand*\lineheight[1]{\fontsize{\fsize}{#1\fsize}\selectfont}%
  \ifx\svgwidth\undefined%
    \setlength{\unitlength}{72.72484094bp}%
    \ifx\svgscale\undefined%
      \relax%
    \else%
      \setlength{\unitlength}{\unitlength * \real{\svgscale}}%
    \fi%
  \else%
    \setlength{\unitlength}{\svgwidth}%
  \fi%
  \global\let\svgwidth\undefined%
  \global\let\svgscale\undefined%
  \makeatother%
  \begin{picture}(1,0.47368456)%
    \lineheight{1}%
    \setlength\tabcolsep{0pt}%
    \put(0,0){\includegraphics[width=\unitlength,page=1]{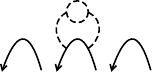}}%
  \end{picture}%
\endgroup%
}$},\quad
        w_3=\scalebox{0.8}{$\centre{
\begingroup%
  \makeatletter%
  \providecommand\color[2][]{%
    \errmessage{(Inkscape) Color is used for the text in Inkscape, but the package 'color.sty' is not loaded}%
    \renewcommand\color[2][]{}%
  }%
  \providecommand\transparent[1]{%
    \errmessage{(Inkscape) Transparency is used (non-zero) for the text in Inkscape, but the package 'transparent.sty' is not loaded}%
    \renewcommand\transparent[1]{}%
  }%
  \providecommand\rotatebox[2]{#2}%
  \newcommand*\fsize{\dimexpr\f@size pt\relax}%
  \newcommand*\lineheight[1]{\fontsize{\fsize}{#1\fsize}\selectfont}%
  \ifx\svgwidth\undefined%
    \setlength{\unitlength}{74.26292919bp}%
    \ifx\svgscale\undefined%
      \relax%
    \else%
      \setlength{\unitlength}{\unitlength * \real{\svgscale}}%
    \fi%
  \else%
    \setlength{\unitlength}{\svgwidth}%
  \fi%
  \global\let\svgwidth\undefined%
  \global\let\svgscale\undefined%
  \makeatother%
  \begin{picture}(1,0.46387389)%
    \lineheight{1}%
    \setlength\tabcolsep{0pt}%
    \put(0,0){\includegraphics[width=\unitlength,page=1]{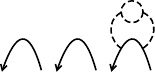}}%
  \end{picture}%
\endgroup%
}$},\\
        w_{12}&=\scalebox{0.8}{$\centre{
\begingroup%
  \makeatletter%
  \providecommand\color[2][]{%
    \errmessage{(Inkscape) Color is used for the text in Inkscape, but the package 'color.sty' is not loaded}%
    \renewcommand\color[2][]{}%
  }%
  \providecommand\transparent[1]{%
    \errmessage{(Inkscape) Transparency is used (non-zero) for the text in Inkscape, but the package 'transparent.sty' is not loaded}%
    \renewcommand\transparent[1]{}%
  }%
  \providecommand\rotatebox[2]{#2}%
  \newcommand*\fsize{\dimexpr\f@size pt\relax}%
  \newcommand*\lineheight[1]{\fontsize{\fsize}{#1\fsize}\selectfont}%
  \ifx\svgwidth\undefined%
    \setlength{\unitlength}{80.22484197bp}%
    \ifx\svgscale\undefined%
      \relax%
    \else%
      \setlength{\unitlength}{\unitlength * \real{\svgscale}}%
    \fi%
  \else%
    \setlength{\unitlength}{\svgwidth}%
  \fi%
  \global\let\svgwidth\undefined%
  \global\let\svgscale\undefined%
  \makeatother%
  \begin{picture}(1,0.45321485)%
    \lineheight{1}%
    \setlength\tabcolsep{0pt}%
    \put(0,0){\includegraphics[width=\unitlength,page=1]{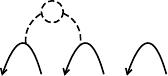}}%
  \end{picture}%
\endgroup%
}$},\quad
        w_{13}=\scalebox{0.8}{$\centre{}$},\quad
        w_{23}=\scalebox{0.8}{$\centre{
\begingroup%
  \makeatletter%
  \providecommand\color[2][]{%
    \errmessage{(Inkscape) Color is used for the text in Inkscape, but the package 'color.sty' is not loaded}%
    \renewcommand\color[2][]{}%
  }%
  \providecommand\transparent[1]{%
    \errmessage{(Inkscape) Transparency is used (non-zero) for the text in Inkscape, but the package 'transparent.sty' is not loaded}%
    \renewcommand\transparent[1]{}%
  }%
  \providecommand\rotatebox[2]{#2}%
  \newcommand*\fsize{\dimexpr\f@size pt\relax}%
  \newcommand*\lineheight[1]{\fontsize{\fsize}{#1\fsize}\selectfont}%
  \ifx\svgwidth\undefined%
    \setlength{\unitlength}{80.22484197bp}%
    \ifx\svgscale\undefined%
      \relax%
    \else%
      \setlength{\unitlength}{\unitlength * \real{\svgscale}}%
    \fi%
  \else%
    \setlength{\unitlength}{\svgwidth}%
  \fi%
  \global\let\svgwidth\undefined%
  \global\let\svgscale\undefined%
  \makeatother%
  \begin{picture}(1,0.46256357)%
    \lineheight{1}%
    \setlength\tabcolsep{0pt}%
    \put(0,0){\includegraphics[width=\unitlength,page=1]{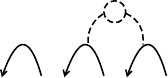}}%
  \end{picture}%
\endgroup%
}$}.
    \end{split}
\end{gather*}
\newcommand\B{\mathcal{B}}
Let $\B$ denote this basis for $A''_2(3)$, and $\B^*$ the dual basis of $\B$ for $A''_2(3)^*$.

\subsection{Generating set of $\Aut(F_3)$}

We have a set of generators of $\Aut(F_3)$ given by Nielsen \cite{Nielsen}, which consists of the following four automorphisms $U,P,\sigma,Q$ called Nielsen transformations:
\begin{align*}
    U(x_1)&=x_1 x_2, & U(x_2)&=x_2,& U(x_3)&=x_3,\\
    P(x_1)&=x_2,     & P(x_2)&=x_1,& P(x_3)&=x_3,\\
    \sigma(x_1)&=x_1^{-1},& \sigma(x_2)&=x_2,& \sigma(x_3)&=x_3,\\
    Q(x_1)&=x_2,& Q(x_2)&=x_3, & Q(x_3)&=x_1.
\end{align*}


\subsection{Action of the Nielsen transformations on $\B$ and $\B^*$}\label{actionofgenerators}

We can compute the action of the Nielsen transformations $U,\sigma, P, Q$ on the basis $\B$ for $A''_2(3)$ and $\B^*$ for $A''_2(3)^*$.
The results are as follows:
$$
\begin{array}{c|c|c|c|c}
     \cdot& U & \sigma & P & Q\\ \hline
     v_1 & v_1+\frac{1}{4} w_{13} & v_1 & v_2 & v_3\\ \hline
     v_2 & v_2-v_1-\frac{3}{2} u_{123}+\frac{1}{4}w_{13}  & -v_2 & v_1 & v_1\\ \hline
     v_3 & v_3+ v_{13} & -v_3 & v_3 & v_2\\ \hline
     v_{12} & v_{12}+\frac{1}{4} w_1+\frac{1}{2}w_{12} & v_{12} & v_{12} & v_{13}\\ \hline
     v_{13} & v_{13} & v_{13} & v_{23} & v_{23}\\ \hline
     v_{23} & v_{23}+ v_{13}+2 v_3 & v_{23} & v_{13} & v_{12}\\ \hline
     u_{123} & u_{123}-\frac{1}{2} w_{13} & -u_{123} & -u_{123} & u_{123}\\ \hline
     w_1 & w_1 & w_1 & w_2 & w_3\\ \hline
     w_2 & w_2+w_1+2 w_{12} & w_2 & w_1 & w_1\\ \hline
     w_3 & w_3 & w_3 & w_3 & w_2\\ \hline
     w_{12} & w_{12}+w_1 & -w_{12} & w_{12} & w_{13}\\ \hline
     w_{13} & w_{13} & -w_{13} & w_{23} & w_{23}\\ \hline
     w_{23} & w_{23}+w_{13} & w_{23} & w_{13} & w_{12}
\end{array}
$$
$$
\begin{array}{c|c|c|c|c}
     \cdot& U & \sigma & P & Q\\ \hline
     v_1^* & v_1^*+ v_2^* & v_1^* & v_2^* & v_3^*\\ \hline
     v_2^* & v_2^* & -v_2^* & v_1^* & v_1^*\\ \hline
     v_3^* & v_3^*-2 v_{23}^* & -v_3^* & v_3^* & v_2^*\\ \hline
     v_{12}^* & v_{12}^* & v_{12}^* & v_{12}^* & v_{13}^*\\ \hline
     v_{13}^* & v_{13}^*-v_3^*+v_{23}^* & v_{13}^* & v_{23}^* & v_{23}^*\\ \hline
     v_{23}^* & v_{23}^* & v_{23}^* & v_{13}^* & v_{12}^*\\ \hline
     u_{123}^* & u_{123}^*+\frac{3}{2} v_2^* & -u_{123}^* & -u_{123}^* & u_{123}^*\\ \hline
     w_1^* & w_1^*+\frac{1}{4}v_{12}^*+w_2^*-w_{12}^* & w_1^* & w_2^* & w_3^*\\ \hline
     w_2^* & w_2^* & w_2^* & w_1^* & w_1^*\\ \hline
     w_3^* & w_3^* & w_3^* & w_3^* & w_2^*\\ \hline
     w_{12}^* & w_{12}^*-\frac{1}{2}v_{12}^*-2w_2^* & -w_{12}^* & w_{12}^* & w_{13}^*\\ \hline
     w_{13}^* & w_{13}^*-\frac{1}{4}v_1^*+\frac{1}{4}v_2^*+\frac{1}{2}u_{123}^*-w_{23}^* & -w_{13}^* & w_{23}^* & w_{23}^*\\ \hline
     w_{23}^* & w_{23}^* & w_{23}^* & w_{13}^* & w_{12}^*
\end{array}
$$

\subsection{Self-duality of $A''_2(3)$}

Define a linear map 
\begin{gather*}
    \Phi: A''_2(3)\to A''_2(3)^*
\end{gather*}
by
\begin{alignat*}{3}
        \Phi(v_1)&=-3 w_{23}^*-\frac{3}{4} v_1^*,&\;
        \Phi(v_2)&=-3 w_{13}^*-\frac{3}{4} v_2^*,&\;
        \Phi(v_3)&=-3 w_{12}^*-\frac{3}{4} v_3^*,\\
        \Phi(v_{12})&=6 w_3^*+\frac{3}{2} v_{13}^*+\frac{3}{2} v_{23}^*,&\;
        \Phi(v_{13})&=6 w_2^*+\frac{3}{2} v_{12}^*+\frac{3}{2} v_{23}^*,&\;
        \Phi(v_{23})&=6 w_1^*+\frac{3}{2} v_{12}^*+\frac{3}{2} v_{13}^*,\\
        &&\;\Phi(u_{123})&= u_{123}^*,&&\\
        \Phi(w_1)&=6 v_{23}^*,&\;
        \Phi(w_2)&=6 v_{13}^*,&\;
        \Phi(w_3)&=6 v_{12}^*,\\
        \Phi(w_{12})&=-3 v_3^*,&\;
        \Phi(w_{13})&=-3 v_2^*,&\;
        \Phi(w_{23})&=-3 v_1^*.
\end{alignat*}
Then we can easily check that $\Phi$ is a linear isomorphism.

\begin{theorem}\label{selfduality}
    The linear isomorphism
    \begin{gather*}
        \Phi: A''_2(3)\xrightarrow{\cong} A''_2(3)^*
    \end{gather*}
    is an isomorphism of $\Aut(F_3)$-modules.
\end{theorem}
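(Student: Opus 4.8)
The plan is to verify directly that $\Phi$ intertwines the $\Aut(F_3)$-action, using the fact that $\Aut(F_3)$ is generated by the four Nielsen transformations $U,P,\sigma,Q$. Since $\Phi$ is already exhibited as an explicit linear isomorphism on the basis $\B$, it suffices to check, for each generator $g\in\{U,P,\sigma,Q\}$ and each basis vector $v\in\B$, that $\Phi(v\cdot g)=\Phi(v)\cdot g$ in $A''_2(3)^*$. Both sides can be read off from the two action tables in Section \ref{actionofgenerators}: the left-hand side applies $g$ to $v$ (first table) and then $\Phi$ (the displayed formulas), while the right-hand side applies $\Phi$ to $v$ and then $g$ to the resulting element of $A''_2(3)^*$ (second table). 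This reduces the theorem to a finite check of $4\times 13=52$ linear identities in a $13$-dimensional space.

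Concretely, I would organize the verification generator by generator. For $P$ and $Q$, which merely permute indices (up to signs coming from $\sigma$-type relations on $u_{123}$), the identities are almost immediate: $\Phi$ is visibly equivariant under the index permutations $1\leftrightarrow 2$ (for $P$) and the $3$-cycle $1\to2\to3\to1$ (for $Q$), once one matches $v_i\leftrightarrow w_{jk}$ and $v_{jk}\leftrightarrow w_i$ appropriately and tracks the sign on $u_{123}$. For $\sigma$, one checks that the signs assigned by $\Phi$ are compatible: e.g. $\sigma$ fixes $v_1$ and negates $w_{23}$ while fixing $v_1^*$ and negating $w_{23}^*$, so $\Phi(\sigma v_1)=\Phi(v_1)=-3w_{23}^*-\tfrac34 v_1^*$ must equal $\sigma\cdot\Phi(v_1)=-3(-w_{23}^*)-\tfrac34 v_1^*$ — wait, this forces a sign consistency that one must confirm holds for all thirteen vectors; the $\sigma$ column of both tables shows the signs are arranged precisely so this works. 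The genuinely substantive case is $U$, the transvection, whose action is non-diagonal and involves the fractional coefficients $\tfrac14,\tfrac32,2$ appearing in the tables; here each identity $\Phi(v\cdot U)=\Phi(v)\cdot U$ is a nontrivial linear relation among several dual basis vectors that must be checked by expanding both sides.

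The main obstacle is precisely this $U$-equivariance check: it is where the specific numerical coefficients in the definition of $\Phi$ (the $-3$, $-\tfrac34$, $6$, $\tfrac32$, and the $u_{123}^*$ with coefficient $1$) are pinned down, and it is the only step that is not essentially bookkeeping about symmetry and signs. I would carry it out by going through the thirteen basis elements in the order $w_1,w_2,w_3,w_{12},w_{13},w_{23}$ (whose images under $\Phi$ are single dual vectors, so the computations are shortest), then $u_{123}$, then $v_{12},v_{13},v_{23}$, and finally $v_1,v_2,v_3$ (the messiest, since $U$ acts on $v_2$ with a $u_{123}$-term and $\Phi$ produces a combination of a $w^*$ and a $v^*$, each of which $U$ then moves). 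For instance, for $v_3$ one needs $\Phi(v_3+v_{13})=\Phi(v_3)\cdot U$, i.e. $-3w_{12}^*-\tfrac34 v_3^*+6w_2^*+\tfrac32 v_{12}^*+\tfrac32 v_{23}^*$ should equal $(-3w_{12}^*-\tfrac34v_3^*)\cdot U$; expanding the right side via the $U$-column of the dual table gives $-3(w_{12}^*-\tfrac12 v_{12}^*-2w_2^*)-\tfrac34(v_3^*-2v_{23}^*)$, and matching coefficients confirms the identity. Once all $52$ identities are verified, equivariance on generators implies equivariance on all of $\Aut(F_3)$, and since $\Phi$ is a bijection the proof is complete.
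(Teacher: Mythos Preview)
Your approach is exactly the paper's: reduce to the four Nielsen generators and verify $\Phi(v\cdot g)=\Phi(v)\cdot g$ for each $v\in\B$ and $g\in\{U,P,\sigma,Q\}$ using the two action tables, with the $U$-case being the only substantive one. One small slip in your $\sigma$-discussion: the tables give $w_{23}\cdot\sigma=w_{23}$ and $w_{23}^*\cdot\sigma=w_{23}^*$ (both fixed, not negated), so the $v_1$ check under $\sigma$ is in fact trivially consistent; otherwise your worked $v_3$ example under $U$ is correct and the plan goes through.
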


\begin{proof}
    We need to check that $\Phi$ is an $\Aut(F_3)$-module map.
    Since $\Aut(F_3)$ is generated by Nielsen transformations,
    we have only to check that 
    \begin{gather*}
        \Phi(x X)=\Phi(x) X
    \end{gather*}
    for $X\in \{U,P,\sigma,Q\}$ and $x\in \B$.
    We can check this by using the tables in Section \ref{actionofgenerators} of the actions of Nielsen transformations on the sets $\B$ and $\B^*$.
    For example, we have
    \begin{gather*}
    \begin{split}
         \Phi(v_1 U)&=\Phi(v_1+\frac{1}{4}w_{13})=\Phi(v_1)+\frac{1}{4}\Phi(w_{13})=-3 w_{23}^*-\frac{3}{4} v_1^*-\frac{3}{4} v_2^*,\\
         \Phi(v_1)U&=(-3 w_{23}^*-\frac{3}{4} v_1^*)U=-3 w_{23}^*-\frac{3}{4} (v_1^*+v_2^*)=-3 w_{23}^*-\frac{3}{4} v_1^*-\frac{3}{4} v_2^*.
    \end{split}
    \end{gather*}
\end{proof}

\begin{corollary}\label{dualA21andA''2/A22}
    The map $\Phi$ induces an isomorphism of $\Aut(F_3)$-modules
    \begin{gather*}
        \Phi|_{A''_2(3)/A_{2,2}(3)}: A''_2(3)/A_{2,2}(3)\xrightarrow{\cong} A_{2,1}(3)^*.
    \end{gather*}
\end{corollary}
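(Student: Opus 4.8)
The plan is to deduce the corollary from Theorem \ref{selfduality} by a duality/filtration argument, essentially by dualizing the composition series of $A''_2(3)$. First I would record the key structural fact from the composition series $A''_2(3)\supsetneq A_{2,1}(3)\supsetneq A_{2,2}(3)\supsetneq 0$: taking $\Q$-linear duals is exact, so dualizing the short exact sequence of $\Aut(F_3)$-modules
\begin{gather*}
    0\to A_{2,1}(3)\to A''_2(3)\to A''_2(3)/A_{2,1}(3)\to 0
\end{gather*}
gives a short exact sequence
\begin{gather*}
    0\to (A''_2(3)/A_{2,1}(3))^*\to A''_2(3)^*\to A_{2,1}(3)^*\to 0,
\end{gather*}
and similarly dualizing $0\to A_{2,2}(3)\to A''_2(3)\to A''_2(3)/A_{2,2}(3)\to 0$ identifies $(A''_2(3)/A_{2,2}(3))^*$ with the submodule $A_{2,2}(3)^{\perp}\subset A''_2(3)^*$ of functionals vanishing on $A_{2,2}(3)$.

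Next I would transport these subspaces back through the isomorphism $\Phi$ of Theorem \ref{selfduality}. Concretely, $\Phi^{-1}(A_{2,2}(3)^{\perp})$ is an $\Aut(F_3)$-submodule of $A''_2(3)$ of dimension $13-\dim A_{2,2}(3)=13-6=7$; since $A''_2(3)$ has the \emph{unique} composition series displayed in the excerpt, the only $7$-dimensional submodule is $A_{2,1}(3)$ (its composition factors being $A_{2,1}(3)/A_{2,2}(3)\cong V_{0,1^3}$ and $A_{2,2}(3)\cong V_{0,2}$, of total dimension $1+6=7$). Hence $\Phi(A_{2,1}(3))=A_{2,2}(3)^{\perp}=(A''_2(3)/A_{2,2}(3))^*$ inside $A''_2(3)^*$. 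Passing to quotients, $\Phi$ induces an isomorphism of $\Aut(F_3)$-modules
\begin{gather*}
    A''_2(3)/A_{2,1}(3)\xrightarrow{\cong} A''_2(3)^*/(A''_2(3)/A_{2,2}(3))^*\cong A_{2,2}(3)^*;
\end{gather*}
dually, and this is the statement we want, $\Phi$ carries $A_{2,2}(3)$ isomorphically onto $(A''_2(3)/A_{2,1}(3))^*$, so that the \emph{cokernel} map
\begin{gather*}
    \Phi\mid_{A''_2(3)/A_{2,2}(3)}: A''_2(3)/A_{2,2}(3)\xrightarrow{\cong} A''_2(3)^*/(A''_2(3)/A_{2,1}(3))^*\cong A_{2,1}(3)^*
\end{gather*}
is a well-defined $\Aut(F_3)$-isomorphism. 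One sanity check I would include: $\dim A''_2(3)/A_{2,2}(3)=13-6=7=\dim A_{2,1}(3)$, consistent with the remark in the introduction.

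Alternatively — and this may be cleaner to write — one can verify the induced map directly from the explicit formula for $\Phi$: observe that $\Phi$ sends the subspace $A_{2,2}(3)=\Span\{w_1,\dots,w_{23}\}$ into $\Span\{v_1^*,\dots,v_{23}^*\}$, which is exactly the annihilator $A_{2,1}(3)^{\perp}$ once one identifies $A''_2(3)^*$'s dual filtration, and that modulo $A_{2,2}(3)$ the classes of $v_1,\dots,v_{23},u_{123}$ map to a basis of $A_{2,1}(3)^*$. I expect the main obstacle to be purely bookkeeping: pinning down precisely which subspace of $A''_2(3)^*$ equals $A_{2,1}(3)^*$ versus $(A''_2(3)/A_{2,2}(3))^*$, i.e. getting the annihilators and the direction of the induced map right, since the filtration and its dual run in opposite directions. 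Once that identification is fixed, the corollary is immediate from Theorem \ref{selfduality} together with the uniqueness of the composition series, and no further computation with Jacobi diagrams is needed.
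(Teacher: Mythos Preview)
The paper states this corollary without proof, treating it as immediate from the explicit formula for $\Phi$: one reads off that $\Phi$ sends $A_{2,2}(3)=\Span\{w_1,w_2,w_3,w_{12},w_{13},w_{23}\}$ onto $\Span\{v_1^*,v_2^*,v_3^*,v_{12}^*,v_{13}^*,v_{23}^*\}=A_{2,1}(3)^{\perp}$, so $\Phi$ descends to the quotient. Your ``alternatively'' paragraph is exactly this argument, so you recover the paper's intended one-line justification.

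Your first approach, via the uniqueness of the composition series, is also correct and is a genuinely different (and pleasant) route: it avoids inspecting the formula for $\Phi$ and would apply to \emph{any} $\Aut(F_3)$-isomorphism $A''_2(3)\to A''_2(3)^*$. One small point: the step you label ``dually, and this is the statement we want'' is not a formal duality but a second invocation of the same uniqueness argument---$\Phi^{-1}(A_{2,1}(3)^{\perp})$ is a $6$-dimensional $\Aut(F_3)$-submodule, hence equals $A_{2,2}(3)$. Once that is said, the cokernel map is well defined and the rest goes through as you wrote.
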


By combining Theorem \ref{thmH1A''2/A22} and Corollary \ref{dualA21andA''2/A22}, we obtain the following.

\begin{corollary}
We have 
$H^1(\IA_3, A_{2,1}(3))\cong V_{2^2,1} \oplus V_{31,1}\oplus V_{21,0}^{\oplus 2}\oplus V_{2,0}$.  
\end{corollary}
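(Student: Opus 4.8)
The plan is to deduce the claim by combining the self-duality results with the already-computed homology of $\IA_3$ with coefficients in $A''_2(3)/A_{2,2}(3)$, and by invoking the universal-coefficients relationship between group homology and cohomology together with the fact that $\GL(3,\Z)$-representations behave well under duality. First I would recall that for a field of characteristic zero and any $\IA_3$-module $M$ over $\Q$, there is a natural isomorphism $H^1(\IA_3, M^*)\cong H_1(\IA_3, M)^*$ as $\GL(3,\Z)$-representations, coming from the universal coefficient theorem in degree one (the $\Ext$ term vanishes over $\Q$) together with the compatibility of the pairing with the residual $\GL(3,\Z)$-action on the Hochschild--Serre setup. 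Applying this with $M = A''_2(3)/A_{2,2}(3)$ and using Corollary~\ref{dualA21andA''2/A22}, which gives $M^* \cong A_{2,1}(3)$ as $\Aut(F_3)$-modules (hence as $\IA_3$-modules), we get $H^1(\IA_3, A_{2,1}(3)) \cong H_1(\IA_3, A''_2(3)/A_{2,2}(3))^*$.

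Next I would substitute the $n=3$ case of Theorem~\ref{thmH1A''2/A22}, namely
\begin{gather*}
H_1(\IA_3, A''_2(3)/A_{2,2}(3)) \cong V_{1,2^2} \oplus V_{1,31} \oplus V_{0,21}^{\oplus 2} \oplus V_{0,2},
\end{gather*}
and dualize each summand. The key representation-theoretic input is that the dual of the irreducible algebraic $\GL(n,\Z)$-representation $V_{\lambda^+,\lambda^-}$ is $V_{\lambda^-,\lambda^+}$ (swapping the two partitions, since the dual of the standard representation $H$ is $H^*$ and duality intertwines the construction of $T^{p,q}$ with $T^{q,p}$). Thus $V_{1,2^2}^* \cong V_{2^2,1}$, $V_{1,31}^* \cong V_{31,1}$, $V_{0,21}^* \cong V_{21,0}$, and $V_{0,2}^* \cong V_{2,0}$, yielding
\begin{gather*}
H^1(\IA_3, A_{2,1}(3)) \cong V_{2^2,1} \oplus V_{31,1} \oplus V_{21,0}^{\oplus 2} \oplus V_{2,0},
\end{gather*}
as claimed.

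The main obstacle is making the universal coefficients / duality step precise at the level of $\GL(3,\Z)$-equivariance: one must check that the natural pairing $H^1(\IA_3,M^*)\otimes H_1(\IA_3,M)\to \Q$ is $\GL(3,\Z)$-invariant, so that the induced isomorphism $H^1(\IA_3,M^*)\cong H_1(\IA_3,M)^*$ respects the $\GL(3,\Z)$-action arising from the extension \eqref{defofIA}. This follows formally because the pairing is induced by the evaluation $M^*\otimes M\to \Q$, which is $\Aut(F_3)$-equivariant, and the conjugation action of $\Aut(F_3)$ on the bar complex of $\IA_3$ descends to the $\GL(3,\Z)$-action on homology; hence the cap/evaluation pairing is equivariant. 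I would also note that since all the representations appearing are algebraic and finite-dimensional, and since $A_{2,1}(3) \cong (A''_2(3)/A_{2,2}(3))^*$ is finitely generated, there are no convergence or finiteness subtleties. Once this equivariance is granted the corollary is immediate from Theorem~\ref{thmH1A''2/A22} and Corollary~\ref{dualA21andA''2/A22}.
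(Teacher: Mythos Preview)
Your proposal is correct and follows essentially the same route as the paper: the paper simply states that the corollary follows by combining Theorem~\ref{thmH1A''2/A22} with Corollary~\ref{dualA21andA''2/A22}, leaving implicit the universal-coefficients duality $H^1(\IA_3,M^*)\cong H_1(\IA_3,M)^*$ and the rule $V_{\lambda^+,\lambda^-}^*\cong V_{\lambda^-,\lambda^+}$ that you spell out. Your added remarks on $\GL(3,\Z)$-equivariance of the pairing are accurate and fill in details the paper omits.
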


\section{The first homology of $\IA_3$ with coefficients in $A''_2(3)$}\label{secH1A''23}

In this section, we will compute the first homology of $\IA_3$ with coefficients in $A''_2(3)$.

\subsection{Generating set of the second homology of $\IA_3$}
\newcommand\rmH{\mathrm{H}}

Here, we recall the result of Day--Putman \cite{DP} on the second homology of $\IA_n$.

\begin{theorem}[Day--Putman \cite{DP}]\label{theoremDP}
    For $n\ge 2$, the second homology $H_2(\IA_n,\Z)$ is a finitely generated $\GL(n,\Z)$-representation, whose generating set consists of elements corresponding to $9$ types of commutator relators of $\IA_n$.
\end{theorem}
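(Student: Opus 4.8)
This is the theorem of Day and Putman, whose argument I sketch. The natural starting point is Hopf's formula. Fix the Magnus generating set $S=\{g_{a,b}\}\cup\{f_{a,b,c}\}$ of $\IA_n$, let $F$ be the free group on $S$, and write $\IA_n=F/\langle\langle R\rangle\rangle$ for a relator set $R$. Then $H_2(\IA_n,\Z)\cong (R\cap[F,F])/[F,R]$, and this is a $\GL(n,\Z)$-module: using \eqref{defofIA}, lifts of elementary and permutation matrices act on $\Aut(F_n)$, preserve $\IA_n$, and permute the Magnus generators modulo $\langle\langle R\rangle\rangle$, so the action descends to $H_2$. Hence it suffices to produce a relator set $R$ that is a finite union of $\GL(n,\Z)$-orbits and whose classes generate $(R\cap[F,F])/[F,R]$ over $\GL(n,\Z)$; the $9$ types in the statement are the commutation and conjugation identities among the $g_{a,b}$ and the $f_{a,b,c}$.

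First I would write out these commutator relations explicitly and sort them into the $9$ $\GL(n,\Z)$-types; the easy direction is that each such relator visibly lies in $R\cap[F,F]$. The real content is completeness — that every class of $H_2(\IA_n,\Z)$ is a $\GL(n,\Z)$-linear combination of these. Since $\IA_3$ is not finitely presentable by Krstic--McCool \cite{KM}, the set $R$ is genuinely infinite and one cannot simply read off a finite presentation; completeness must instead be proved by induction on $n$.

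The inductive engine is an action of $\Aut(F_n)$, hence of $\IA_n$ while tracking the $\GL(n,\Z)$-symmetry, on a highly connected simplicial complex built from partial bases of $F_n$ (or a suitable Torelli variant), whose simplex stabilizers are, up to controlled free and abelian pieces, copies of $\IA_{n-1}$. High connectivity yields a spectral sequence (or a partial equivariant resolution) computing $H_{\le 2}(\IA_n)$ from $H_{\le 2}$ of the stabilizers. Feeding in the rank $n-1$ case, the known $H_1(\IA_n,\Z)\cong H^*\otimes{\bigwedge}^2 H$ of Cohen--Pakianathan--Farb--Kawazumi, and a direct check in the base cases, one chases the spectral sequence to conclude that $H_2(\IA_n,\Z)$ is generated over $\GL(n,\Z)$ by the images of the $9$ relator families.

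The main obstacle is precisely this completeness step: controlling $[F,R]$ — equivalently the differentials in the spectral sequence — tightly enough to show that the infinitely many relators beyond the $9$ commutator types contribute nothing new modulo $[F,R]$ and the $\GL(n,\Z)$-action. This rests on the sharp connectivity estimate for the partial-basis complex together with a careful analysis of how relators restrict to the stabilizer subgroups, and is the technical core of \cite{DP}.
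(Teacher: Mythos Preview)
The paper does not prove this statement: Theorem~\ref{theoremDP} is quoted from Day--Putman \cite{DP} as an input, and the paper only records, for $n=3$, the five nonempty relator families $\rmH1,\rmH2,\rmH4,\rmH7,\rmH9$ from \cite[Table~1]{DP} before using them in Section~\ref{secH1A''23}. There is therefore no ``paper's own proof'' to compare against.

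As for the content of your sketch, it is a reasonable high-level outline of the Day--Putman strategy, but it blurs together two separate papers and two logically distinct steps. The $9$ relator families come from Day--Putman's earlier work giving an \emph{infinite} presentation (an $L$-presentation) of $\IA_n$ whose relators fall into finitely many $\Aut(F_n)$-orbits; that is where the partial-basis complex, the Birman-type exact sequence, and the inductive machinery live. The $H_2$ theorem cited here then follows rather formally from Hopf's formula applied to that $L$-presentation: once the relator set is a finite union of $\GL(n,\Z)$-orbits, the classes in $(R\cap[F,F])/[F,R]$ are generated as a $\GL(n,\Z)$-module by finitely many elements, namely the commutator-type relators. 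So the ``main obstacle'' you identify --- controlling $[F,R]$ via spectral-sequence differentials --- is not really where the work is in \cite{DP}; the hard part was already done in producing the equivariant presentation, and the passage to $H_2$ is comparatively short. Your sketch would be improved by separating these two steps and pointing to the $L$-presentation as the real input.
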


If $n=3$, then non-empty sets of commutator relators are $\rmH1, \rmH2, \rmH4, \rmH7, \rmH9$ in \cite[Table 1]{DP} listed below:
\begin{gather}\label{relationsDayPutman}
    \begin{split}
        \rmH1&: \quad [C_{x_a,x_b},C_{x_c,x_b}]=1,\\
        \rmH2&: \quad [M_{x_a^{\alpha},[x_b^{\beta},x_c^{\gamma}]},M_{x_a^{-\alpha},[x_b^{\epsilon},x_c^{\zeta}]^{\xi}}]=1,\\
        \rmH4&: \quad [C_{x_c,x_b}^{\beta}C_{x_a,x_b}^{\beta},C_{x_c,x_a}^{\alpha}]=1,\\
        \rmH7&: \quad [M_{x_c^{\gamma},[x_a^{\alpha},x_b^{\delta}]}, C_{x_a,x_b}^{\beta}][C_{x_c,x_b}^{-\delta}, M_{x_c^{\gamma},[x_a^{\alpha},x_b^{\beta}]}][M_{x_c^{\gamma},[x_a^{\alpha},x_b^{\beta}]} , M_{x_c^{\gamma},[x_a^{\alpha},x_b^{\delta}]}]=1,\\
        \rmH9&: \quad [C_{x_a, x_c}^{\gamma}C_{x_b, x_c}^{\gamma}, C_{x_a, x_b}^{\beta}C_{x_c, x_b}^{\beta}][M_{x_a^{\alpha},[x_b^{\beta},x_c^{\gamma}]},C_{x_b, x_a}^{\alpha}C_{x_c, x_a}^{\alpha}]=1,
    \end{split}
\end{gather}
where $a,b,c$ are distinct elements of $\{1,2,3\}$ and where $\alpha,\beta,\gamma,\delta,\epsilon, \zeta, \xi\in \{\pm 1\}$. 
Here, $C_{x_a,x_b}, M_{x_a^{\alpha},[x_b^{\beta},x_c^{\gamma}]}\in \IA_3$ are defined by
\begin{alignat*}{2}
    C_{x_a,x_b}(x_a)&=x_b x_a x_b^{-1}, &\quad C_{x_a,x_b}(x_i)&=x_i \quad (i\neq a),\\
    M_{x_a^{\alpha},[x_b^{\beta},x_c^{\gamma}]}(x_a^{\alpha})&=[x_b^{\beta},x_c^{\gamma}]x_a^{\alpha},&\quad 
    M_{x_a^{\alpha},[x_b^{\beta},x_c^{\gamma}]}(x_i)&=x_i \quad (i\neq a).
\end{alignat*}
Note that we have $g_{a,b}=C_{x_b,x_a}$ and $f_{a,b,c}=M_{x_c^{-1},[x_b,x_a]}$, where $g_{a,b}$ and $f_{a,b,c}$ are the generators of $\IA_n$ which we recalled in Section \ref{subsecAbeliancycle}.

\begin{remark}
    Day--Putman \cite{DP} remarked that the case where $n=3$ of Theorem \ref{theoremDP} had been obtained by Baker \cite{Baker}.
\end{remark}

We will explicitly write down $2$-cycles corresponding to commutator relators.
Let $G$ be a group.
If we have a commutator relator of the form $[A,B]=1$ for $A,B\in G$, then we can take a $2$-cycle $A\otimes B-B\otimes A \in \Z[G]^{\otimes 2}$ of $G$.
In a similar way, we can take the following $2$-cycle 
\begin{gather*}
    A\otimes B+AB\otimes A^{-1}+ABA^{-1}\otimes B^{-1}+[A,B]\otimes C+[A,B]C\otimes D + [A,B]CD\otimes C^{-1}\\
    -(A\otimes A^{-1}+B\otimes B^{-1}+C\otimes C^{-1})-3(1\otimes 1)\in \Z[G]^{\otimes 2}
\end{gather*}
corresponding to $[A,B][C,D]=1$ for $A,B,C,D\in G$, and the following $2$-cycle 
\begin{gather*}
    A\otimes B+AB\otimes A^{-1}+ABA^{-1}\otimes B^{-1}+[A,B]\otimes C+[A,B]C\otimes D+ [A,B]CD\otimes C^{-1}\\
    +[A,B]CDC^{-1}\otimes D^{-1}+[A,B][C,D]\otimes E+ [A,B][C,D]E\otimes F
    +[A,B][C,D]EF\otimes E^{-1}\\
    -(A\otimes A^{-1}+B\otimes B^{-1}+C\otimes C^{-1}+D\otimes D^{-1}+E\otimes E^{-1})-5(1\otimes 1)\in \Z[G]^{\otimes 2}
\end{gather*}
corresponding to $[A,B][C,D][E,F]=1$ for $A,B,C,D,E,F\in G$. 

\subsection{Highest weight vectors in $H_1(\IA_3, A_{2,2}(3))$}

We have obtained an irreducible decomposition of $H_1(\IA_3, A_{2,2}(3))$ in \eqref{decompH1A22}.
Here, we will observe the highest weight vectors (maximal vectors) of each irreducible component of $H_1(\IA_3, A_{2,2}(3))$.

Benkert--Chakrabarti--Halverson--Leduc--Lee--Stroomer \cite{BCHLLS} gave a basis for the space of maximal vectors of $H^{\otimes p}\otimes (H^*)^{\otimes q}$ for $p,q\ge 0$.
Since $H_1(\IA_3, A_{2,2}(3))\cong \Sym^2 H^*\otimes H^*\otimes {\bigwedge}^{2}H$ can be embedded in $(H^*)^{\otimes 3}\otimes H^{\otimes 2}$ via $\iota'$ appeared in Section \ref{subseccontraction}, we can explicitly give the highest weight vectors of $H_1(\IA_3, A_{2,2}(3))$ as follows.
We take the following basis 
$$\{(e_p^*\cdot e_q^*) \otimes e_r^*\otimes (e_a\wedge e_b)\mid a,b, p,q,r\in \{1,2,3\},\; a<b,\; p\le q\}$$
for $\Sym^2 H^*\otimes H^*\otimes {\bigwedge}^{2}H$.
By considering the lexicographical order on $(e_a\wedge e_b) \otimes (e_p^*\cdot e_q^*)$, set
$$v^i_j=(e_p^*\cdot e_q^*) \otimes e_i^*\otimes (e_a\wedge e_b) \quad (1\le i\le 3,\; 1\le j\le 18),$$
where $(e_a\wedge e_b) \otimes (e_p^*\cdot e_q^*)$ is the $j$-th element.
For example, $v^1_1=(e_1^*\cdot e_1^*)\otimes e_1^*\otimes (e_1\wedge e_2)$, $v^1_2=(e_1^*\cdot e_2^*)\otimes e_1^*\otimes (e_1\wedge e_2)$, $v^1_7=(e_1^*\cdot e_1^*)\otimes e_1^*\otimes (e_1\wedge e_3)$.
Then the highest weight vectors are given in the following lemma.

\begin{lemma}
    Each irreducible component of $H_1(\IA_3, A_{2,2}(3))\cong \Sym^2 H^*\otimes H^*\otimes {\bigwedge}^{2}H$
    has the following highest weight vectors
    \begin{gather*}
            v^3_6\in V_{1^2,3},\quad
            v^2_5+v^2_{12}-v^3_4-v^3_{11}\in V_{1,1^2},\quad
            v^2_6+v^3_{12}\in V_{1,2}^{(1)},\quad
            v^3_5+v^3_{12}\in V_{1,2}^{(2)},\\
            v^1_5+v^1_{12}-v^2_3+v^2_{18}-v^3_9-v^3_{17}\in V_{0,1}.
    \end{gather*}
\end{lemma}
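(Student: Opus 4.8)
The plan is to verify directly that each of the five listed vectors is annihilated by the standard raising operators of $\GL(3,\Q)$ and has the claimed weight, and then to check that the vectors whose images span the corresponding highest weight spaces are nonzero in $H_1(\IA_3,A_{2,2}(3))$. First I would recall that, under the embedding $\iota'\colon \Sym^2 H^*\otimes H^*\otimes \bigwedge^2 H\hookrightarrow (H^*)^{\otimes 3}\otimes H^{\otimes 2}$, the $\GL(3,\Q)$-action is the tensor-product action on $(H^*)^{\otimes 3}\otimes H^{\otimes 2}$, so a maximal vector is one killed by the Lie algebra raising operators $E_{i,i+1}$ for $i=1,2$ (equivalently, by $E_{1,2}$ and $E_{2,3}$). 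Since the basis vectors $v^i_j$ are monomials in the $e_p^*$ and the $e_a\wedge e_b$, the action of each $E_{k,l}$ permutes/combines these monomials in a completely explicit, finite way; I would tabulate $E_{1,2}$ and $E_{2,3}$ on the relevant $v^i_j$ and read off that the five displayed combinations lie in the joint kernel. The weights are then read off from the exponent vectors: counting the multiplicity of each $e_p^*$ (contributing $-\varepsilon_p$) and each $e_a$ inside a wedge (contributing $+\varepsilon_a$), one checks the weight of $v^3_6$ is that of $V_{1^2,3}$, etc. This matches against the decomposition already recorded in \eqref{decompH1A22}, which guarantees each isotypic component of $H_1(\IA_3,A_{2,2}(3))$ is multiplicity $\le 2$, so exhibiting one (resp.\ two linearly independent) maximal vector(s) of the right weight pins down the component(s) completely.

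Next I would address the point that $H_1(\IA_3,A_{2,2}(3))$ is \emph{a priori} only a subquotient of $(H^*)^{\otimes 3}\otimes H^{\otimes 2}$ via $\iota'$, so one must confirm the listed vectors are actually realized, i.e.\ nonzero, in the homology. Here I would use the abelian $2$-cycles and boundary-map computations already carried out: in the proof of Lemma~\ref{imageboundary2'} the elements $\partial'_2([u]_{A_{2,2}(n)}\otimes\alpha_{0,2})$ and $\partial'_2([u]_{A_{2,2}(n)}\otimes\alpha_{2,0})$ were computed explicitly in $H_1(\IA_n,A_{2,2}(n))$, and the contraction maps $c_{1^2,3},c_{1,1^2},c_{1,2},c'_{1,2},c_{0,1}$ were shown to detect each irreducible summand. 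So for $n=3$ I would feed these same explicit cycles through $\iota'$, expand in the basis $\{(e_p^*\cdot e_q^*)\otimes e_r^*\otimes(e_a\wedge e_b)\}$, and apply suitable lowering operators $E_{l,k}$ (or the elements $E_{k,l}-\id$ of $\GL(3,\Z)$ used throughout) to land on the displayed maximal vectors; since those cycles generate the full homology as a $\GL(3,\Z)$-module and the displayed vectors generate the isotypic components, this shows the maximal vectors are nonzero and correctly labeled.

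The routine part is the bookkeeping: expanding $c_\lambda$-type symmetrizers, tracking signs in the wedge factors, and organizing the $\sim 54$ basis monomials $v^i_j$. The main obstacle I expect is making sure the identification of the two copies of $V_{1,2}$ is unambiguous — i.e.\ that $v^2_6+v^3_{12}$ and $v^3_5+v^3_{12}$ are linearly independent modulo the maximal vectors of the other components and both genuinely survive to homology rather than one being a $\GL(3,\Q)$-translate of the other or of a boundary. I would resolve this by checking that these two vectors are detected by the two independent contraction maps $c_{1,2}$ and $c'_{1,2}$ used in Lemma~\ref{imageboundary2'} (whose outputs there, $\tfrac12 e_3^*\otimes e_3^*\otimes e_4$ and $-\tfrac32 e_3^*\otimes e_3^*\otimes e_4$ in the $n\ge 4$ case, restrict appropriately to $n=3$ to give a rank-$2$ system), so that their preimages give two linearly independent maximal vectors of weight matching $V_{1,2}^{\oplus 2}$, completing the identification.
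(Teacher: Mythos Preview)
The paper's proof is a single-line citation of \cite{BCHLLS}, which gives a general basis for the space of maximal vectors in $H^{\otimes p}\otimes (H^*)^{\otimes q}$; the lemma is obtained by specializing that basis and intersecting with the image of $\iota'$. Your plan to instead verify by hand that each listed vector is killed by the raising operators $E_{1,2},E_{2,3}$ and has the stated weight is a perfectly valid alternative and amounts to the same thing done explicitly rather than by citation.

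Your second half, however, rests on a misconception. Because $\IA_3$ acts \emph{trivially} on $A_{2,2}(3)$, the identification
\[
H_1(\IA_3,A_{2,2}(3))\;\cong\;A_{2,2}(3)\otimes H_1(\IA_3,\Q)\;\cong\;\Sym^2 H^*\otimes H^*\otimes{\textstyle\bigwedge}^2 H
\]
(equation \eqref{decompH1A22}) is an honest isomorphism of $\GL(3,\Z)$-representations, not a subquotient embedding. The vectors $v^i_j$ are by definition basis elements of $\Sym^2 H^*\otimes H^*\otimes\bigwedge^2 H$, so the displayed combinations are automatically nonzero in homology; the map $\iota'$ is only a computational device to sit this space inside $(H^*)^{\otimes 3}\otimes H^{\otimes 2}$. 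There is nothing to ``survive'' and no need to invoke abelian cycles or boundary maps here. (Incidentally, the cycle $\alpha_{2,0}$ you propose to use involves $g_{4,1}g_{4,2}g_{4,3}$ and exists only for $n\ge 4$, so it would not be available for $n=3$ anyway; but as just explained, none of this machinery is needed.) Once you drop that part, your proof reduces to the direct weight/raising-operator check together with the already-known multiplicities from \eqref{decompH1A22}, which suffices.
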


\begin{proof}
    The statement follows from \cite[Theorems 2.5 and 2.7]{BCHLLS}.
\end{proof}

\subsection{The first homology of $\IA_3$ with coefficients in $A_{2,1}(3)$}

We will study the $\GL(3,\Z)$-module structure of $H_1(\IA_3,A_{2,1}(3))$ by using the argument similar to that in Section \ref{secH1A21}.
For $n=3$, however, the boundary map 
$$\partial'_2: H_2(\IA_3, A_{2,1}(3)/A_{2,2}(3))\to H_1(\IA_3, A_{2,2}(3))$$
is not surjective.
We will determine the image of $\partial'_2$ by using the elements of $H_2(\IA_3,\Z)$ corresponding to the relations $\rmH1,\rmH2,\rmH4,\rmH7,\rmH9$ in \eqref{relationsDayPutman} and the highest weight vectors of each irreducible component of $H_1(\IA_3, A_{2,2}(3))$.

\begin{proposition}\label{imageboundary2'n3}
For $n=3$, we have
    \begin{gather*}
        \im(\partial'_2)\cong V_{1,2}\oplus V_{0,1}.
    \end{gather*}
\end{proposition}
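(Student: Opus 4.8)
The plan is to bound $\im(\partial'_2)$ from above and below and match the two bounds. For the upper bound, observe that the target $H_1(\IA_3, A_{2,2}(3))$ decomposes as in \eqref{decompH1A22} for $n=3$, namely $V_{1^2,3}\oplus V_{1,1^2}\oplus V_{1,2}^{\oplus 2}\oplus V_{0,1}$, and since $\partial'_2$ is $\GL(3,\Z)$-equivariant its image is a subrepresentation, i.e.\ a sum of some of these summands (with $V_{1,2}$ occurring with multiplicity $0$, $1$ or $2$). To cut this down I would argue that certain of these components cannot lie in the image: the source $H_2(\IA_3, A_{2,1}(3)/A_{2,2}(3))$ factors as $A_{2,1}(3)/A_{2,2}(3)\otimes H_2(\IA_3,\Q)\cong V_{0,1^3}\otimes H_2(\IA_3,\Q)$ (trivial $\IA_3$-action), and by Theorem \ref{theoremDP} (Day--Putman) $H_2(\IA_3,\Q)$ is generated as a $\GL(3,\Z)$-representation by the classes of the relators $\rmH1,\rmH2,\rmH4,\rmH7,\rmH9$ of \eqref{relationsDayPutman}. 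So it suffices to evaluate $\partial'_2$ on $V_{0,1^3}\otimes (\text{each relator class})$ and see which irreducible components of $H_1(\IA_3,A_{2,2}(3))$ are hit; the span of all these images is exactly $\im(\partial'_2)$.

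\textbf{Key steps.} First I would fix the identification of the source with $V_{0,1^3}\otimes H_2(\IA_3,\Q)$ and, using the explicit $2$-cycles written down in Section~9.1 for $[A,B]=1$, $[A,B][C,D]=1$ and $[A,B][C,D][E,F]=1$, write each Day--Putman relator $\rmH1,\dots,\rmH9$ as an explicit element of $\Z[\IA_3]^{\otimes 2}$. Second, using the connecting-homomorphism description $\partial'_2(x\otimes\text{(2-cycle)})$ in terms of the bracket map of Lemma~\ref{bracketmap} — as already used in Lemma~\ref{imageboundary2'} and Proposition~\ref{imageboundary2} — I would compute $\partial'_2$ of $u\otimes(\text{relator class})$, where $u$ runs over the one-dimensional space $A_{2,1}(3)/A_{2,2}(3)\cong V_{0,1^3}$ (spanned by the image of $u_{123}=\scalebox{0.7}{$\centreinput{S111.pdf_tex}$}$ essentially, as in Lemma~\ref{imageboundary2'}). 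Third, I would project each resulting element of $H_1(\IA_3,A_{2,2}(3))$ onto the irreducible components by pairing against the explicit highest weight vectors $v^3_6\in V_{1^2,3}$, $v^2_5+v^2_{12}-v^3_4-v^3_{11}\in V_{1,1^2}$, $v^2_6+v^3_{12}\in V_{1,2}^{(1)}$, $v^3_5+v^3_{12}\in V_{1,2}^{(2)}$, $v^1_5+v^1_{12}-v^2_3+v^2_{18}-v^3_9-v^3_{17}\in V_{0,1}$ of the previous lemma (after embedding via $\iota'$), or equivalently using the contraction maps $c_{1^2,3},c_{1^2,21},c_{1,1^2},c_{1,2},c'_{1,2},c_{0,1}$ of Section~\ref{subseccontraction} together with the matrices $E_{k,l},P_{k,l}$ to isolate each isotypic piece. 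The outcome I expect: the relator classes produce nonzero components in $V_{1,2}$ (both copies, or a copy detected after applying suitable $E_{k,l}-\id$ operators) and in $V_{0,1}$, but all images in $V_{1^2,3}$ and $V_{1,1^2}$ vanish, giving $\im(\partial'_2)=V_{1,2}\oplus V_{0,1}$ — though I should double-check whether the multiplicity of $V_{1,2}$ in the image is $1$ or $2$, and the statement as phrased (a single $V_{1,2}$) must be reconciled with that computation.

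\textbf{Main obstacle.} The genuine difficulty is the explicit computation of $\partial'_2$ on the Day--Putman relator cycles: each $\rmH$-relator, once expanded via the $2$-cycle formulas, is a fairly long $\Z[\IA_3]^{\otimes 2}$-chain, and applying the connecting map requires lifting to $A_{2,1}(3)$ and then computing iterated brackets $[\,\cdot\,,\phi]$ for the Magnus generators $g_{a,b},f_{a,b,c}$ acting on Jacobi diagrams — exactly the kind of diagram calculus done in Sections 4--5 but now over several terms per relator and five relator families. Organizing this so that the many terms collapse (using $\Inn(F_3)$ acting trivially, Lemma~\ref{Innerautomorphism}, and the fact that the bracket lands in $A_{2,2}(3)$ by Lemma~\ref{bracketmap}) is the crux; the representation-theoretic bookkeeping afterwards is routine given the highest-weight-vector list. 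A secondary subtlety is ensuring the five relator classes really do generate $H_2(\IA_3,\Q)$ as a $\GL(3,\Z)$-module and not merely contain a generating set up to the remaining relator types that are empty for $n=3$ — this is precisely the content of Theorem~\ref{theoremDP} specialized to $n=3$, so it may be invoked directly.
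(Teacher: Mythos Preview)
Your proposal is correct and follows essentially the same approach as the paper: the paper likewise uses Theorem~\ref{theoremDP} to reduce to evaluating $\partial'_2$ on $[u_{123}]_{A_{2,2}(3)}$ tensored with explicit $2$-cycles for $\rmH1,\rmH2,\rmH4,\rmH7,\rmH9$, then reads off the isotypic components against the highest weight vectors of the preceding lemma (the contraction maps are not needed here). Regarding your flagged uncertainty about the multiplicity of $V_{1,2}$: the paper's explicit computation shows that every nonzero image lands in the \emph{same} copy $V_{1,2}^{(1)}$ (the one with highest weight vector $v^2_6+v^3_{12}$), so the multiplicity is indeed $1$; the remaining parameter choices for each relator type are handled by the $\gpS_3$-action up to sign, which preserves isotypic components.
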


\begin{proof}
    The space $A_{2,1}(3)/A_{2,2}(3)$ is one dimensional spanned by $[u_{123}]_{A_{2,2}(3)}$, where $u_{123}$ is described in Section \ref{subsecbasis}.
    Since $\IA_3$ acts trivially on $A_{2,1}(3)/A_{2,2}(3)$, we have $$H_2(\IA_3, A_{2,1}(3)/A_{2,2}(3))\cong A_{2,1}(3)/A_{2,2}(3)\otimes H_2(\IA_3,\Q).$$
    Therefore, by Theorem \ref{theoremDP}, we obtain
    a generating set of $H_2(\IA_3, A_{2,1}(3)/A_{2,2}(3))$ as a $\GL(3,\Z)$-representation.

    One of the $2$-cycles corresponding to the commutator relator $\rmH1$ is
    $$\beta_1=[u_{123}]_{A_{2,2}(3)}\otimes (C_{x_1,x_2}\otimes C_{x_3,x_2}-C_{x_3,x_2}\otimes C_{x_1,x_2}).$$
    We have
    \begin{gather*}
    \begin{split}
        \partial'_2(\beta_1)
        &=[u_{123}, C_{x_1,x_2}]\otimes [C_{x_3,x_2}]_{[\IA_3,\IA_3]}-[u_{123},C_{x_3,x_2}]\otimes [C_{x_1,x_2}]_{[\IA_3,\IA_3]}\\
        &=-v^1_3+v^3_{15}\in V_{1,2}^{(1)}.
    \end{split}
    \end{gather*}
    The other cases in $\rmH1$ can be obtained from $\partial'_2(\beta_1)$ by the action of $\gpS_3$ up to sign.

    One can check that the images of the $2$-cycles corresponding to the commutator relator $\rmH2$ are zero.

    For the $2$-cycle $\beta_4$ corresponding to the commutator relator $$[C_{x_3,x_2}C_{x_1,x_2}, C_{x_3,x_1}]=1,$$
    we have
    \begin{gather*}
    \begin{split}
        \partial'_2(\beta_4)
        &=[u_{123}, C_{x_3,x_2}C_{x_1,x_2}]\otimes [C_{x_3,x_1}]_{[\IA_3,\IA_3]}
        -[u_{123},C_{x_3,x_1}]\otimes [C_{x_3,x_2}C_{x_1,x_2}]_{[\IA_3,\IA_3]}\\
        &=v^1_5-v^3_{17}\in V_{1,2}^{(1)}\oplus V_{0,1}.
    \end{split}
    \end{gather*}
    The other cases can be obtained from $\partial'_2(\beta_4)$ by the action of $\gpS_3$ up to sign.
      
    For the $2$-cycle $\beta_7$ corresponding to the commutator relator $$[M_{x_3,[x_1,x_2]}, C_{x_1,x_2}][C_{x_3,x_2}^{-1}, M_{x_3,[x_1,x_2]}]=1,$$
    we have
    \begin{gather*}
    \begin{split}
        \partial'_2(\beta_7)
        &=[u_{123}, M_{x_3,[x_1,x_2]}]\otimes [C_{x_1,x_2}]_{[\IA_3,\IA_3]}
        +[u_{123}, C_{x_3,x_2}^{-1}]\otimes [M_{x_3,[x_1,x_2]}]_{[\IA_3,\IA_3]}\\
        &+[u_{123},C_{x_1,x_2}]\otimes [M_{x_3,[x_1,x_2]}^{-1}]_{[\IA_3,\IA_3]}
        +[u_{123},M_{x_3,[x_1,x_2]}]\otimes [C_{x_3,x_2}]_{[\IA_3,\IA_3]}\\
        &=v^1_6-v^3_{18}\in V_{1,2}^{(1)}.
    \end{split}
    \end{gather*}
    The other cases can be obtained from $\partial'_2(\beta_7)$ by the action of $\gpS_3$ up to sign.

    For the $2$-cycle $\beta_9$ corresponding to the commutator relator $$[C_{x_1,x_3}C_{x_2,x_3}, C_{x_1,x_2}C_{x_3,x_2}][M_{x_1,[x_2,x_3]}, C_{x_2,x_1}C_{x_3,x_1}]=1,$$
    we have
    \begin{gather*}
    \begin{split}
        \partial'_2(\beta_9)
        =&[u_{123}, C_{x_1,x_3}C_{x_2,x_3}]\otimes [C_{x_1,x_2}C_{x_3,x_2}]_{[\IA_3,\IA_3]}\\
        &+[u_{123}, M_{x_1,[x_2,x_3]}]\otimes [C_{x_2,x_1}C_{x_3,x_1}]_{[\IA_3,\IA_3]}\\
        &+[u_{123},C_{x_1,x_2}C_{x_3,x_2}]\otimes [(C_{x_1,x_3}C_{x_2,x_3})^{-1}]_{[\IA_3,\IA_3]}\\
        &+[u_{123},C_{x_2,x_1}C_{x_3,x_1}]\otimes [M_{x_1,[x_2,x_3]}^{-1}]_{[\IA_3,\IA_3]}\\
        =&-v^2_1-v^3_7\in V_{1,2}^{(1)}\oplus V_{0,1}.
    \end{split}
    \end{gather*}
    The other cases can be obtained from $\partial'_2(\beta_9)$ by the action of $\gpS_3$ up to sign.
    This completes the proof.
\end{proof}

\begin{theorem}\label{thmH1A213}
    We have 
    \begin{gather*}
        H_1(\IA_3,A_{2,1}(3))\cong V_{1^2,3}\oplus V_{1,1^2}\oplus V_{1,2}\oplus V_{0,1^2}.
    \end{gather*}
\end{theorem}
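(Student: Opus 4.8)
The plan is to mimic the structure of the proof of Theorem \ref{thmH1A21}, using the long exact sequence \eqref{H1A21} associated to the short exact sequence $0\to A_{2,2}(3)\to A_{2,1}(3)\to A_{2,1}(3)/A_{2,2}(3)\to 0$, but now with the refined information about $\partial'_2$ coming from Proposition \ref{imageboundary2'n3}. Concretely, from \eqref{H1A21} we extract the short exact sequence of $\GL(3,\Z)$-representations
\begin{gather*}
    0\to \Cok(\partial'_2)\to H_1(\IA_3,A_{2,1}(3))\xrightarrow{\pi_*} \ker(\partial'_1)\to 0,
\end{gather*}
exactly as in \eqref{H1A21shortex}. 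So the proof reduces to computing $\Cok(\partial'_2)$ and $\ker(\partial'_1)$.

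First I would compute $\ker(\partial'_1)$. As in Section \ref{secH1A21}, the map $\partial'_1: H_1(\IA_3,A_{2,1}(3)/A_{2,2}(3))\to H_0(\IA_3,A_{2,2}(3))$ has image $\cong V_{0,2}$ by the same argument giving \eqref{imageboundary1'} (the composite $H_0(\IA_3,A_{2,2}(3))\to H_0(\IA_3,A_{2,1}(3))$ is an isomorphism onto its image since both $H_0$-terms are isomorphic to $A_{2,1}(3)/A_{2,2}(3)$, forcing $\partial'_1$ to be surjective onto $V_{0,2}$). Combining this with the $n=3$ case of the decomposition \eqref{decompH1A21/A22}, namely $H_1(\IA_3,A_{2,1}(3)/A_{2,2}(3))\cong V_{0,1^2}\oplus V_{0,2}$, gives $\ker(\partial'_1)\cong V_{0,1^2}$.

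Next I would compute $\Cok(\partial'_2)$. By the $n=3$ case of the decomposition \eqref{decompH1A22}, we have $H_1(\IA_3,A_{2,2}(3))\cong V_{1^2,3}\oplus V_{1,1^2}\oplus V_{1,2}^{\oplus 2}\oplus V_{0,1}$, and by Proposition \ref{imageboundary2'n3} we have $\im(\partial'_2)\cong V_{1,2}\oplus V_{0,1}$. Here the key point — which is already implicit in Proposition \ref{imageboundary2'n3} but needs to be stated — is that $\im(\partial'_2)$ is exactly the copy $V_{1,2}^{(1)}$ together with $V_{0,1}$ inside this decomposition, so that the quotient is genuinely $V_{1^2,3}\oplus V_{1,1^2}\oplus V_{1,2}^{(2)}$. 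This is precisely why the highest-weight-vector analysis of the previous subsection was carried out: the four displayed images $\partial'_2(\beta_1)$, $\partial'_2(\beta_4)$, $\partial'_2(\beta_7)$, $\partial'_2(\beta_9)$ all land in the span of the highest weight vectors $v^2_6+v^3_{12}\in V_{1,2}^{(1)}$ and $v^1_5+v^1_{12}-v^2_3+v^2_{18}-v^3_9-v^3_{17}\in V_{0,1}$ (after applying the $\gpS_3$-action to generate full $\GL(3,\Z)$-subrepresentations), while none of them meet $V_{1^2,3}$, $V_{1,1^2}$, or the second copy $V_{1,2}^{(2)}$ spanned by $v^3_5+v^3_{12}$. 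Hence
\begin{gather*}
    \Cok(\partial'_2)\cong V_{1^2,3}\oplus V_{1,1^2}\oplus V_{1,2}.
\end{gather*}

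Finally I would assemble the pieces: the short exact sequence reads $0\to V_{1^2,3}\oplus V_{1,1^2}\oplus V_{1,2}\to H_1(\IA_3,A_{2,1}(3))\to V_{0,1^2}\to 0$, and since these are algebraic $\GL(3,\Z)$-representations — which are completely reducible — the sequence splits, yielding $H_1(\IA_3,A_{2,1}(3))\cong V_{1^2,3}\oplus V_{1,1^2}\oplus V_{1,2}\oplus V_{0,1^2}$. The main obstacle I anticipate is the bookkeeping in identifying which copy of $V_{1,2}$ survives in the cokernel: one must be careful that Proposition \ref{imageboundary2'n3} really establishes that the image is $V_{1,2}^{(1)}$ (the copy containing $v^2_6+v^3_{12}$) and not some diagonal copy of $V_{1,2}$ inside $V_{1,2}^{\oplus 2}$, since a diagonal embedding would still give $\Cok\cong V_{1^2,3}\oplus V_{1,1^2}\oplus V_{1,2}$ but the argument should make the identification explicit; verifying that the four cycle-images, together with their $\gpS_3$-translates, span exactly $V_{1,2}^{(1)}\oplus V_{0,1}$ and no more is the real content, and it relies on the explicit highest-weight-vector computation rather than on abstract nonsense.
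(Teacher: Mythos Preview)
Your computation of $\ker(\partial'_1)\cong V_{0,1^2}$ and $\Cok(\partial'_2)\cong V_{1^2,3}\oplus V_{1,1^2}\oplus V_{1,2}$ is correct and matches the paper. (Your worry at the end is a red herring: whichever copy of $V_{1,2}$ sits in $\im(\partial'_2)$---even a diagonal one---the quotient of $V_{1,2}^{\oplus 2}$ by it is still abstractly $V_{1,2}$, since a quotient of a semisimple module is semisimple.)

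The genuine gap is your splitting step. You write that the short exact sequence
\[
0\to V_{1^2,3}\oplus V_{1,1^2}\oplus V_{1,2}\to H_1(\IA_3,A_{2,1}(3))\to V_{0,1^2}\to 0
\]
splits ``since these are algebraic $\GL(3,\Z)$-representations --- which are completely reducible''. But the middle term is \emph{not} known to be algebraic: you only know that the outer terms are. That an extension of algebraic $\GL(n,\Z)$-representations in the category of all $\GL(n,\Z)$-representations automatically splits is precisely the content of Conjecture~\ref{conjalgebraic} in the paper, and is stated there as an open conjecture, not a theorem. So you are invoking an unproven statement at the crucial moment.

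The paper closes this gap by producing an explicit section. It observes that the element $[u_{123}\otimes \sigma_{x_1}]\in H_1(\IA_3,A_{2,1}(3))$, where $\sigma_{x_1}$ is conjugation by $x_1$, maps under $\pi_*$ to the highest weight vector of $V_{0,1^2}\cong\ker(\partial'_1)$; the point is that $u_{123}\otimes \sigma_{x_1}$ is a genuine cycle in $C_1(\IA_3,A_{2,1}(3))$ (not merely modulo $A_{2,2}(3)$) because $\sigma_{x_1}\in\Inn(F_3)$ acts trivially on $A_{2,1}(3)$ by Lemma~\ref{Innerautomorphism}. This yields a $\GL(3,\Z)$-equivariant section $s:V_{0,1^2}\to H_1(\IA_3,A_{2,1}(3))$ and hence the splitting. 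Your proof needs an argument of this sort in place of the appeal to complete reducibility.
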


\begin{proof}
    We have the short exact sequence \eqref{H1A21shortex} of $\GL(3,\Z)$-representations.
    By \eqref{decompH1A22} and Proposition \ref{imageboundary2'n3},
    we have
    \begin{gather*}
        \Cok(\partial'_2)\cong  V_{1^2,3}\oplus V_{1,1^2}\oplus V_{1,2}.
    \end{gather*}
    By \eqref{decompH1A21/A22} and \eqref{imageboundary1'}, we have
    \begin{gather*}
        \ker(\partial'_1)\cong V_{0,1^2}.
    \end{gather*}
    
    We can take a section 
    \begin{gather*}
    s: V_{0,1^2}\to H_1(\IA_3,A_{2,1}(3))
    \end{gather*}
    of the projection $H_1(\IA_3,A_{2,1}(3))\twoheadrightarrow \ker(\partial'_1) \cong V_{0,1^2}$, which sends the highest weight vector $(e_1^*\wedge e_2^*\wedge e_3^*)\otimes (e_2^*\otimes (e_1\wedge e_2)+e_3^*\otimes (e_1\wedge e_3))$ of $V_{0,1^2}$ to 
    $[u_{123}\otimes \sigma_{x_1}]$, where $\sigma_{x_1}\in \IA_3$ is defined by $\sigma_{x_1}(x_i)=x_1 x_i x_1^{-1}$ for $i\in \{1,2,3\}$.
    Therefore, the short exact sequence  \eqref{H1A21shortex} splits, which completes the proof.
\end{proof}

\subsection{The first homology of $\IA_3$ with coefficients in $A''_2(3)$}

In a way similar to the case of $n\ge 4$, we will compute the first homology $H_1(\IA_3,A''_2(3))$.

\begin{theorem}\label{thmH1A2''3}
We have 
\begin{gather*}
H_1(\IA_3,A''_2(3))\cong  V_{1,2^2} \oplus V_{1,31}\oplus V_{0,21}^{\oplus 2}.
\end{gather*}
\end{theorem}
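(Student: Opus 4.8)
The strategy mirrors the proof of Theorem \ref{thmH1A2''} for $n\ge 4$, using the short exact sequence
\begin{gather*}
    0\to A_{2,1}(3)\to A''_2(3)\to A''_2(3)/A_{2,1}(3)\to 0,
\end{gather*}
which induces a long exact sequence analogous to \eqref{H1A2''full}:
\begin{gather*}
    \cdots\to H_2(\IA_3, A''_2(3)/A_{2,1}(3))\xrightarrow{\partial''_2} H_1(\IA_3, A_{2,1}(3))\to H_1(\IA_3, A''_2(3))\to H_1(\IA_3, A''_2(3)/A_{2,1}(3))\xrightarrow{\partial''_1} H_0(\IA_3, A_{2,1}(3))\to\cdots.
\end{gather*}
As in the case $n\ge 4$, the terminal computation gives $\im(\partial''_1)\cong V_{0,1^3}$ since $H_0(\IA_3, A_{2,1}(3))\cong A_{2,1}(3)/A_{2,2}(3)\cong V_{0,1^3}$. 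Combined with the decomposition \eqref{decompH1A''2/A21} for $n=3$, namely $H_1(\IA_3, A''_2(3)/A_{2,1}(3))\cong V_{1,2^2}\oplus V_{1,31}\oplus V_{0,1^3}\oplus V_{0,21}^{\oplus 2}$, this yields $\ker(\partial''_1)\cong V_{1,2^2}\oplus V_{1,31}\oplus V_{0,21}^{\oplus 2}$, and hence the short exact sequence
\begin{gather*}
0\to \Cok(\partial''_2)\to H_1(\IA_3,A''_2(3))\to V_{1,2^2}\oplus V_{1,31}\oplus V_{0,21}^{\oplus 2}\to 0.
\end{gather*}
So the whole problem reduces to computing $\Cok(\partial''_2)$, and the claim is that this cokernel vanishes.

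The key tool is again the commutative diagram \eqref{commutativediagrampartial2}, which for $n=3$ reads $\partial_2=\pi_*\circ\partial''_2$ where $\partial_2: H_2(\IA_3, A''_2(3)/A_{2,1}(3))\to H_1(\IA_3,A_{2,1}(3)/A_{2,2}(3))$ and $\pi_*: H_1(\IA_3, A_{2,1}(3))\to H_1(\IA_3,A_{2,1}(3)/A_{2,2}(3))$. The difference from the case $n\ge 4$ is that $\pi_*$ is no longer injective: from the long exact sequence \eqref{H1A21} for $n=3$ and Proposition \ref{imageboundary2'n3}, $\ker(\pi_*)=\im(\partial'_2)\cong V_{1,2}\oplus V_{0,1}$, so $\pi_*$ has a two-block kernel. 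First I would identify $\im(\partial_2)$ for $n=3$: by Proposition \ref{imageboundary2} it contains $V_{0,1^2}$, and by the argument scheme of that proposition (tracking $[v]_{A_{2,1}(3)}\otimes\alpha_{0,2}$ under the contraction maps $c_{0,1^2}$, $c'_{0,1^2}$) one gets exactly $\im(\partial_2)\cong V_{0,1^2}^{\oplus 2}$, matching $H_1(\IA_3, A_{2,1}(3)/A_{2,2}(3))\cong V_{0,1^2}\oplus V_{0,2}$ up to the $V_{0,2}$ summand (which was already shown to survive in $\Cok(\partial_2)$ in Theorem \ref{thmH1A''2/A22}). Then $\im(\partial''_2)$ surjects onto $\im(\partial_2)\cong V_{0,1^2}^{\oplus 2}$ and, since $\ker(\pi_*)\cong V_{1,2}\oplus V_{0,1}$ while $H_1(\IA_3, A_{2,1}(3))\cong V_{1^2,3}\oplus V_{1,1^2}\oplus V_{1,2}\oplus V_{0,1^2}$ by Theorem \ref{thmH1A213}, the possible composition factors of $\Cok(\partial''_2)$ are confined to $V_{1^2,3}$, $V_{1,1^2}$, and the parts of $V_{1,2}\oplus V_{0,1^2}$ not hit. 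The plan is to show all of these are in fact in the image of $\partial''_2$.

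The main obstacle is establishing surjectivity of $\partial''_2$ onto the "non-Albanese" blocks $V_{1^2,3}$ and $V_{1,1^2}$, and onto the copy of $V_{0,1}$-flavored class, since these are precisely the summands of $H_1(\IA_3, A_{2,1}(3))$ that die in the passage to $A_{2,1}(3)/A_{2,2}(3)$ and so are invisible to the contraction-map arguments used for $n\ge 4$. To reach them I would lift explicit elements of $H_2(\IA_3, A''_2(3)/A_{2,1}(3))\cong A''_2(3)/A_{2,1}(3)\otimes H_2(\IA_3,\Q)$ built from the Day--Putman commutator relators $\rmH1, \rmH2, \rmH4, \rmH7, \rmH9$ of \eqref{relationsDayPutman}, compute $\partial''_2$ of each by the bracket-map formula $[u,f]=u\cdot f-u$ of Lemma \ref{bracketmap} using the tables of Nielsen-transformation actions on $\B$ in Section \ref{actionofgenerators}, and pair the results against the highest weight vectors of $V_{1^2,3}$, $V_{1,1^2}$, $V_{1,2}^{(1)}$, $V_{1,2}^{(2)}$, $V_{0,1}$ in $H_1(\IA_3, A_{2,2}(3))$ recorded just above — lifted appropriately to $H_1(\IA_3, A_{2,1}(3))$ via the self-duality isomorphism $\Phi$ of Theorem \ref{selfduality} and Corollary \ref{dualA21andA''2/A22}, which pins down $H_1(\IA_3, A_{2,1}(3))^*$ and hence makes the pairings computable. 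Once $\partial''_2$ is shown surjective, $\Cok(\partial''_2)=0$ and the short exact sequence above collapses to $H_1(\IA_3,A''_2(3))\cong V_{1,2^2}\oplus V_{1,31}\oplus V_{0,21}^{\oplus 2}$, as claimed.
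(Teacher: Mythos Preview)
Your framework is right and matches the paper's: reduce to showing $\partial''_2$ is surjective. But several details are off:
\begin{itemize}
\item You write $\ker(\pi_*)=\im(\partial'_2)$; in fact $\ker(\pi_*)=\Cok(\partial'_2)\cong V_{1^2,3}\oplus V_{1,1^2}\oplus V_{1,2}$ (the image of $H_1(\IA_3,A_{2,2}(3))\to H_1(\IA_3,A_{2,1}(3))$, not the image of $\partial'_2$).
\item For $n=3$ the target $H_1(\IA_3,A_{2,1}(3)/A_{2,2}(3))\cong V_{0,1^2}\oplus V_{0,2}$ has only one $V_{0,1^2}$, so $\im(\partial_2)\cong V_{0,1^2}$, not $V_{0,1^2}^{\oplus 2}$; the cycle $\alpha_{0,1^2}$ requires $n\ge 4$ and is unavailable here.
\item There is no $V_{0,1}$ in $H_1(\IA_3,A_{2,1}(3))$, so the ``$V_{0,1}$-flavored class'' is spurious.
\end{itemize}

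More importantly, the paper does not route the surjectivity argument through the commutative diagram or through the self-duality $\Phi$ at all. It simply writes down two explicit $2$-cycles in $A''_2(3)/A_{2,1}(3)\otimes H_2(\IA_3,\Q)$, namely
\[
\gamma_1=[v_2]_{A_{2,1}(3)}\otimes(C_{x_1,x_2}\otimes C_{x_3,x_2}-C_{x_3,x_2}\otimes C_{x_1,x_2}),\qquad
\gamma_2=[v_2]_{A_{2,1}(3)}\otimes(M_{x_1,[x_2,x_3]}\otimes M_{x_1^{-1},[x_2,x_3]}-\cdots),
\]
built from one $\rmH1$ and one $\rmH2$ relator, and computes $\partial''_2(\gamma_i)$ directly via the bracket map. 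One finds $\partial''_2(\gamma_1)=3[u_{123}\otimes\sigma_{x_2}]+\tfrac{3}{2}(v^1_3+v^3_{15})$, whose two pieces generate $V_{0,1^2}$ (via the section $s$ of Theorem~\ref{thmH1A213}) and $V_{1^2,3}\oplus V_{1,1^2}$ (via the highest-weight vectors in $H_1(\IA_3,A_{2,2}(3))$), while $\partial''_2(\gamma_2)=-3v^1_{14}$ generates $V_{1^2,3}\oplus V_{1,2}$. These four summands are all of $H_1(\IA_3,A_{2,1}(3))$ by Theorem~\ref{thmH1A213}, so $\partial''_2$ is surjective. Your plan to test against highest-weight vectors is exactly what is done, but the identifications happen inside $H_1(\IA_3,A_{2,2}(3))$ directly, with no need to invoke $\Phi$ or lift anything via duality.
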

\begin{proof}
    We have the short exact sequence \eqref{H1A2''shortex}.
    By \eqref{decompH1A''2/A21} and \eqref{imageboundary''1}, we obtain an irreducible decomposition of $\ker(\partial''_1)$, that is, we have $\ker(\partial''_1)\cong V_{1,2^2} \oplus V_{1,31}\oplus V_{0,21}^{\oplus 2}$.

    We will prove that the boundary map 
    $$\partial''_2: H_2(\IA_3, A''_2(3)/A_{2,1}(3))\to H_1(\IA_3, A_{2,1}(3))$$
    is surjective. 
    For the $2$-cycle 
    \begin{gather*}
        \gamma_1=[v_2]_{A_{2,1}(3)}\otimes (C_{x_1, x_2}\otimes C_{x_3,x_2}- C_{x_3,x_2}\otimes C_{x_1,x_2}),
    \end{gather*}
    we have
    \begin{gather*}
        \partial''_2(\gamma_1)
        =3 [u_{123}\otimes \sigma_{x_2}]+\frac{3}{2}w_{13}\otimes (e_3^*\otimes (e_2\wedge e_3)- e_1^*\otimes (e_2\wedge e_1)),
    \end{gather*}
    where $\sigma_{x_2}\in \IA_3$ is defined by $\sigma_{x_2}(x_i)=x_2 x_i x_2^{-1}$ for $i\in \{1,2,3\}$. 
    We can check that $[u_{123}\otimes \sigma_{x_2}]$ generates $V_{0,1^2}\subset H_1(\IA_3,A_{2,1}(3))$ by using the section $s$ appeared in the proof of Theorem \ref{thmH1A213}.
    We can also check that $w_{13}\otimes (e_3^*\otimes (e_2\wedge e_3)- e_1^*\otimes (e_2\wedge e_1))=v^1_3+v^3_{15}$ generates $V_{1^2,3}\oplus V_{1,1^2}\subset H_1(\IA_3,A_{2,1}(3))$ by using the highest weight vectors.
    For the $2$-cycle 
    \begin{gather*}
      \gamma_2=[v_2]_{A_{2,1}(3)}\otimes (M_{x_1,[x_2,x_3]}\otimes M_{x_1^{-1},[x_2,x_3]}- M_{x_1^{-1},[x_2,x_3]}\otimes M_{x_1,[x_2,x_3]}),
    \end{gather*}
    we have
    \begin{gather*}
        \partial''_2(\gamma_2)
        =-3 w_{12}\otimes (e_1^*\otimes (e_2\wedge e_3))
        =-3 v^1_{14}, 
    \end{gather*}
    which generates $V_{1^2,3}\oplus V_{1,2}^{(2)}\subset H_1(\IA_3,A_{2,1}(3))$.
    
    Therefore, by Theorem \ref{thmH1A213}, we have 
    $$\im(\partial''_2)\cong V_{1^2,3}\oplus V_{1,1^2}\oplus V_{1,2}\oplus V_{0,1^2}\cong H_1(\IA_3,A_{2,1}(3))$$
    and $\Cok(\partial''_2)=0$, which completes the proof.
\end{proof}

\section{On higher degree homologies of $\IA_n$ with coefficients in $A_2(n)$}\label{secH2}

In this section, we will study the second and higher degree homologies by considering the relation to the homology of $\Aut(F_n)$ and $\GL(n,\Z)$.

\subsection{The cohomology of $\Aut(F_n)$ with coefficients in $A_{2,1}(n)^*$}

The stable cohomology of $\Aut(F_n)$ with coefficients in $H^{\otimes q}$ was computed by Djament \cite{Djament} and Vespa \cite{Vespa}, and a stable range was given by Randal-Williams \cite{RW}.

\begin{theorem}[Djament \cite{Djament} and Vespa \cite{Vespa}, Randal-Williams \cite{RW}]\label{thmH*Aut}
We have for $n\ge 2i+q+3$,
\begin{gather*}
        H^i(\Aut(F_n),H^{\otimes q})\cong
        \begin{cases}
            0 & i\neq q\\
            \mathcal{P}(q)\otimes S^{1^q} & i=q,
        \end{cases}
\end{gather*}
where $S^{1^q}$ is the sign representation of $\gpS_q$ and $\mathcal{P}(q)$ the permutation module on the set of partitions of $\{1,\dots, q\}$.
\end{theorem}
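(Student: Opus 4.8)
The plan is to prove the statement in two stages: first reduce the computation to the stable range by twisted homological stability, and then identify the stable cohomology by functor-homology methods, with Galatius's rational acyclicity theorem as the crucial input on the untwisted part.

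First I would invoke twisted homological stability for $\Aut(F_n)$. The assignment $n\mapsto H(n)^{\otimes q}$ underlies a coefficient system on the category of free groups which is polynomial of degree $q$, so by the machinery of Randal-Williams--Wahl the stabilization maps on $H^i(\Aut(F_n);H^{\otimes q})$ are isomorphisms once $n$ is large relative to $i$ and $q$; Randal-Williams's sharpening gives the explicit range $n\ge 2i+q+3$ appearing in the statement. This reduces the problem to identifying $H^*(\Aut(F_\infty);H^{\otimes q})$ together with its $\gpS_q$-action by permutation of the tensor factors.

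For the stable computation I would follow Djament and Vespa. The functor $V\mapsto V^{\otimes q}$ applied to the abelianization functor on the category $\mathbf{gr}$ of finitely generated free groups is a polynomial functor of strong degree $q$, and by a Betley--Suslin-type comparison its stable $\Aut$-homology is computed by functor homology, i.e. by $\operatorname{Tor}$-groups (dually $\Ext$-groups) in the relevant functor category. One then resolves the polynomial functor by its cross-effects: the iterated cross-effect of $(-)^{\otimes q}$ is detected by a Koszul-type complex, the untwisted building block is handled by Galatius's theorem that $\widetilde H_*(\Aut(F_\infty);\Q)=0$, and assembling the cross-effects over the ways of grouping the $q$ tensor slots introduces the combinatorics of set partitions of $\{1,\dots,q\}$; this is what produces the permutation module $\mathcal{P}(q)$, while the antisymmetry of the Koszul differential produces the sign twist $S^{1^q}$ and the internal degree of the Koszul complex forces concentration in cohomological degree $i=q$. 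Alternatively one can run Randal-Williams's geometric argument: a scanning and group-completion identification of the twisted stable cohomology with the cohomology of a section space in which $H$ corresponds to a tautological class, followed by the same partition-counting bookkeeping.

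The main obstacle is exactly this last step: proving the vanishing $H^i(\Aut(F_\infty);H^{\otimes q})=0$ for $i\ne q$ and pinning down the $\gpS_q$-module $\mathcal{P}(q)\otimes S^{1^q}$ in degree $q$. Neither part is formal — the vanishing combines Galatius's theorem with the degree count in the functor (or Koszul) resolution, and the identification of the representation requires carefully tracking the $\gpS_q$-action through the cross-effect decomposition and the signs in the differential. An inductive variant, bootstrapping from the cases $q=0$ (Galatius) and $q=1$ along the long exact sequences relating $H^{\otimes q}$ to $H^{\otimes q-1}$, is available, but the genuine content is still the combinatorics of partitions and signs; for the purposes of this paper it suffices to appeal to the results of Djament, Vespa and Randal-Williams cited in the statement.
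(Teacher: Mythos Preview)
The paper does not give a proof of this theorem: it is stated as a result of Djament, Vespa, and Randal-Williams and used as a black box (in Lemma~\ref{lemHiAutA21} and the surrounding discussion). So there is no ``paper's own proof'' to compare your proposal against.

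That said, your outline is an accurate summary of how the cited works establish the result: twisted homological stability (Randal-Williams--Wahl, with the explicit range from Randal-Williams) reduces to the stable value, and Djament--Vespa's functor-homology framework, together with Galatius's rational acyclicity, computes the stable cohomology and its $\gpS_q$-structure. For the purposes of this paper no argument is expected beyond the citation; your final sentence, deferring to the references, is exactly what the paper does.
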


\begin{lemma}\label{lemHiAutA21}
For $n\ge 2i+6$, we have
    \begin{gather}\label{HiAutA21}
        H^i(\Aut(F_n),A_{2,1}(n)^*)\cong 
        \begin{cases}
            \Q^{\oplus 3} & i=3\\
            0 & \text{otherwise}.
        \end{cases}
    \end{gather}
\end{lemma}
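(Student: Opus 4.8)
The plan is to dualise the two-step filtration of $A_{2,1}(n)$ and reduce everything to Theorem \ref{thmH*Aut}. First I would dualise the short exact sequence $0\to A_{2,2}(n)\to A_{2,1}(n)\to A_{2,1}(n)/A_{2,2}(n)\to 0$ of $\Aut(F_n)$-modules. Using $A_{2,2}(n)\cong V_{0,2}$ and $A_{2,1}(n)/A_{2,2}(n)\cong V_{0,1^3}$ together with $V_{0,\mu}^*\cong V_{\mu,0}$, this produces a short exact sequence of $\Aut(F_n)$-modules
\begin{gather*}
 0\to {\bigwedge}^3 H\to A_{2,1}(n)^*\to \Sym^2 H\to 0,
\end{gather*}
where ${\bigwedge}^3 H=V_{1^3,0}$ and $\Sym^2 H=V_{2,0}$ are algebraic $\GL(n,\Z)$-representations (the middle term need not be semisimple).

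Next I would compute the cohomology of the two outer terms from Theorem \ref{thmH*Aut}. Both ${\bigwedge}^3 H$ and $\Sym^2 H$ are the $\gpS_q$-isotypic summands of $H^{\otimes 3}$ and $H^{\otimes 2}$ for the action permuting tensor factors, namely ${\bigwedge}^3 H=\Hom_{\gpS_3}(S^{1^3},H^{\otimes 3})$ and $\Sym^2 H=\Hom_{\gpS_2}(S^{(2)},H^{\otimes 2})$; since extracting an isotypic summand is exact over $\Q$, one gets $H^i(\Aut(F_n),{\bigwedge}^3 H)\cong\Hom_{\gpS_3}(S^{1^3},H^i(\Aut(F_n),H^{\otimes 3}))$ and similarly for $\Sym^2 H$. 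By Theorem \ref{thmH*Aut}, for $n\ge 2i+6$ (hence $n\ge 2i+q+3$ for $q\le 3$) these vanish unless $i=q$, and $H^q(\Aut(F_n),H^{\otimes q})\cong \mathcal{P}(q)\otimes S^{1^q}$. Using $S^{1^q}\otimes S^{1^q}\cong\mathbf{1}$ I obtain $\Hom_{\gpS_2}(S^{(2)},\mathcal{P}(2)\otimes S^{1^2})\cong\Hom_{\gpS_2}(S^{1^2},\mathcal{P}(2))=0$ because $\mathcal{P}(2)$ is the $2$-dimensional trivial $\gpS_2$-module, so $H^*(\Aut(F_n),\Sym^2 H)=0$ in the relevant range; and $\Hom_{\gpS_3}(S^{1^3},\mathcal{P}(3)\otimes S^{1^3})\cong\mathcal{P}(3)^{\gpS_3}$, which is $3$-dimensional since the $\gpS_3$-orbits on set partitions of $\{1,2,3\}$ are indexed by the three integer partitions of $3$. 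Hence $H^i(\Aut(F_n),{\bigwedge}^3 H)\cong\Q^{\oplus 3}$ for $i=3$ and $0$ otherwise.

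Finally I would feed these computations into the long exact cohomology sequence of the short exact sequence above. Since $H^{i-1}(\Aut(F_n),\Sym^2 H)=H^i(\Aut(F_n),\Sym^2 H)=0$ for every $i$ with $n\ge 2i+6$, the connecting homomorphisms force $H^i(\Aut(F_n),A_{2,1}(n)^*)\cong H^i(\Aut(F_n),{\bigwedge}^3 H)$, which is exactly \eqref{HiAutA21}.

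I do not expect a substantial obstacle here: the proof is essentially bookkeeping, and the only points requiring care are identifying the dual $\GL(n,\Z)$-representations $V_{0,1^3}^*$ and $V_{0,2}^*$, keeping track in Theorem \ref{thmH*Aut} of which tensor factor carries the sign twist so that the correct Schur functor is extracted, and verifying that the single bound $n\ge 2i+6$ simultaneously covers the vanishing of $H^{i-1}$ and $H^i$ of $\Sym^2 H$ (which need only $n\ge 2i+5$) and the evaluation of $H^i(\Aut(F_n),{\bigwedge}^3 H)$ (which needs $n\ge 2i+6$, achieved with equality when $i=3$).
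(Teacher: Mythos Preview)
Your proposal is correct and follows essentially the same route as the paper: dualise the filtration step $0\to A_{2,2}(n)\to A_{2,1}(n)\to A_{2,1}(n)/A_{2,2}(n)\to 0$, identify the outer duals with $\Sym^2 H$ and $\bigwedge^3 H$, compute their $\Aut(F_n)$-cohomology via Theorem~\ref{thmH*Aut}, and conclude from the long exact sequence. You are in fact more explicit than the paper in extracting the multiplicities $\dim\Hom_{\gpS_q}(S^{1^q},\mathcal{P}(q)\otimes S^{1^q})$, which the paper simply asserts.
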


\begin{proof}
By Theorem \ref{thmH*Aut}, we have \begin{gather*}
    H^i(\Aut(F_n), A_{2,2}(n)^*)\cong H^i(\Aut(F_n), V_{2,0})=0 
\end{gather*}
for $n\ge 2i+5$ and 
\begin{gather*}
    H^i(\Aut(F_n),(A_{2,1}(n)/A_{2,2}(n))^*)\cong
    \begin{cases}
        \Q^{\oplus 3} & i=3\\
        0 &  \text{otherwise}
    \end{cases}
\end{gather*}
for $n\ge 2i+6$.
Therefore, we obtain \eqref{HiAutA21} by using the long exact sequence 
\begin{gather*}
\begin{split}
\cdots& \to H^{i-1}(\Aut(F_n), A_{2,2}(n)^*)\to H^i(\Aut(F_n), (A_{2,1}(n)/A_{2,2}(n))^*)\\
&\to  H^i(\Aut(F_n), A_{2,1}(n)^*)\to
H^i(\Aut(F_n), A_{2,2}(n)^*)\\
&\to H^{i+1}(\Aut(F_n), (A_{2,1}(n)/A_{2,2}(n))^*)\to \cdots
\end{split}
\end{gather*}
associated to the short exact sequence
\begin{gather*}
    0\to A_{2,2}(n)\to A_{2,1}(n)\to A_{2,1}(n)/A_{2,2}(n)\to 0.
\end{gather*}
\end{proof}

\subsection{On the $\GL(n,\Z)$-invariant part of $H^*(\IA_n,A_{2,1}(n)^*)$}

Here we study the $\GL(n,\Z)$-invariant part of $H^*(\IA_n, A_{2,1}(n)^*)$ by using 
the spectral sequences
\begin{gather}\label{spseq}
    E_2^{p,q}=H^p(\GL(n,\Z),H^q(\IA_n,A_{2,1}(n)^*))\Rightarrow H^{p+q}(\Aut(F_n),A_{2,1}(n)^*)
\end{gather}
associated to the short exact sequence
$$
1\to \IA_n\to \Aut(F_n)\to \GL(n,\Z)\to 1.
$$

Before stating our result, we will recall Borel's stability and vanishing theorem.

\begin{theorem}[Borel \cite{Borel1, Borel2}, Li-Sun \cite{Li-Sun} and Kupers--Miller--Patzt \cite{KMP}]\label{thmBorel}
    We have
    \begin{gather*}
        H^*(\GL(n,\Z),\Q)\cong {\bigwedge}_{\Q} (x_1,x_2,\cdots),\quad \deg (x_i)=4i+1 
    \end{gather*}
    for $*\le n-1$
    and for any non-trivial algebraic $\GL(n,\Z)$-representation $V_{\underline{\lambda}}$,
    \begin{gather*}
        H^*(\GL(n,\Z),V_{\underline{\lambda}})=0
    \end{gather*}
    for $*\le n-2$ .
\end{theorem}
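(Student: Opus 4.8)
This statement collects classical results, so my plan is to indicate how each half follows rather than to reprove them. The key tool I would invoke is Borel's comparison theorem: for $\Gamma=\GL(n,\Z)$, regarded as an arithmetic subgroup of $G=\GL(n,\R)$, and for any algebraic representation $V_{\underline\lambda}$, the natural map from continuous (Lie group) cohomology to group cohomology is an isomorphism in a range of degrees growing with $n$, and by the van Est isomorphism the continuous cohomology is the relative Lie algebra cohomology $H^*(\g,K;V_{\underline\lambda}\otimes\R)$, where $\g=\mathfrak{gl}_n(\R)$ and $K=\mathrm{O}(n)$ is a maximal compact subgroup. This reduces both parts of the theorem to a Lie-theoretic computation, at the cost of an unspecified stable range; sharpening that range to $*\le n-1$ and $*\le n-2$ is the separate contribution of Li--Sun and of Kupers--Miller--Patzt.

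For the trivial-coefficient assertion, I would identify the stable value using $H^*(\g,K;\R)\cong H^*(\mathrm{SU}(n)/\mathrm{SO}(n);\R)$, the cohomology of the compact dual symmetric space, which is the exterior algebra on generators in degrees $5,9,13,\dots$, i.e. in degrees $4i+1$. Equivalently, this is packaged by Borel's computation of the rational algebraic $K$-theory of $\Z$, namely $K_m(\Z)\otimes\Q\cong\Q$ for $m=4i+1$ with $i\ge 1$ and $K_m(\Z)\otimes\Q=0$ for all other $m>0$: since $B\GL_\infty(\Z)^+$ is then rationally a product of Eilenberg--MacLane spaces with homotopy in the odd degrees $4i+1$, its rational cohomology is the free graded-commutative algebra on the duals of its rational homotopy groups, hence an exterior algebra on classes $x_i$ of degree $4i+1$. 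Homological stability for $\GL(n,\Z)$ then transports this to the claimed range.

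For a nontrivial algebraic representation $V_{\underline\lambda}$, I would argue via infinitesimal characters: the finite-dimensional $\g$-module $V_{\underline\lambda}\otimes\R$ and the trivial module have distinct infinitesimal characters (they differ by the highest weight $\lambda\ne 0$), so the Casimir element of $\g$ acts on them by distinct scalars, and the standard infinitesimal-character obstruction for relative Lie algebra cohomology (as in Borel--Wallach) forces $H^*(\g,K;V_{\underline\lambda}\otimes\R)=0$. Feeding this into the comparison theorem and homological stability yields $H^*(\GL(n,\Z),V_{\underline\lambda})=0$ in the stable range.

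The step I expect to be the main obstacle is Borel's comparison theorem itself: establishing that stable arithmetic-group cohomology is computed by continuous cohomology requires serious input from reduction theory and automorphic forms (square-integrability estimates controlling the Borel--Serre boundary contribution, and the spectral decomposition of $L^2$-cohomology). Granting that and Borel's $K$-theoretic computation, the remaining steps -- the compact-dual identification, the Casimir vanishing, and the degree bookkeeping -- are routine, whereas obtaining the sharp ranges $*\le n-1$ and $*\le n-2$ is the substantive work of Li--Sun and Kupers--Miller--Patzt, who combine homological stability for $\GL(n,\Z)$ with the stable computation above.
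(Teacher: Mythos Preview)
The paper does not prove this theorem: it is stated as a black box with citations to Borel, Li--Sun, and Kupers--Miller--Patzt, and then applied. So there is no proof in the paper to compare your proposal against.

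That said, your sketch is a reasonable outline of the standard route to such results, and your attribution of the individual pieces (Borel for the stable computation via relative Lie algebra cohomology and the comparison theorem; Li--Sun and Kupers--Miller--Patzt for the explicit ranges) matches the paper's citations. If anything, you are supplying more than the paper does. One small caution: the precise ranges $*\le n-1$ and $*\le n-2$ are exactly what the cited papers establish, and your sketch correctly flags that sharpening the range is where the recent work enters; you should not expect to extract those bounds from the general Borel machinery alone.
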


We obtain the vanishing of the $\GL(n,\Z)$-invariant part of the second cohomology of $\IA_n$ with coefficients in $A_{2,1}(n)^*$.

\begin{proposition}\label{propH2coinvA21}
For $n\ge 10$, we have 
\begin{gather*}
    H^2(\IA_n,A_{2,1}(n)^*)^{\GL(n,\Z)}=0.
\end{gather*}    
\end{proposition}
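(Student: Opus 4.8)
The plan is to extract the vanishing from the low-degree part of the spectral sequence \eqref{spseq} together with Borel's theorem \ref{thmBorel} and the computation of $H^*(\Aut(F_n),A_{2,1}(n)^*)$ in Lemma \ref{lemHiAutA21}. First I would record the shape of the $E_2$-page in total degree $\le 3$. Write $W^q=H^q(\IA_n,A_{2,1}(n)^*)$, a $\GL(n,\Z)$-module. Since $W^0=H^0(\IA_n,A_{2,1}(n)^*)=(A_{2,1}(n)^*)^{\IA_n}$, and since $\IA_n$ acts on $A_{2,1}(n)$ through the filtration with trivial graded pieces, one checks that $W^0$ is an (algebraic) $\GL(n,\Z)$-representation with no trivial summand, so by Theorem \ref{thmBorel} the row $q=0$ satisfies $E_2^{p,0}=H^p(\GL(n,\Z),W^0)=0$ for $p\le n-2$, in particular for $p\le 2$ once $n\ge 4$. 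The goal is the entry $E_2^{0,2}=H^0(\GL(n,\Z),W^2)=(W^2)^{\GL(n,\Z)}=H^2(\IA_n,A_{2,1}(n)^*)^{\GL(n,\Z)}$, so I must show this is killed by differentials and that nothing survives.

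Next I would run the differentials into and out of $E_2^{0,2}$. The only differential out of $(0,2)$ in this range is $d_3\colon E_3^{0,2}\to E_3^{3,0}$, and $E_3^{3,0}$ is a subquotient of $E_2^{3,0}=H^3(\GL(n,\Z),W^0)$, which vanishes for $n\ge 5$ by Borel (as $W^0$ is a nontrivial algebraic representation, so its cohomology vanishes in degrees $\le n-2$). Hence $d_3$ out of $E_3^{0,2}$ is zero, so $E_\infty^{0,2}=E_3^{0,2}=\ker\big(d_2\colon E_2^{0,2}\to E_2^{2,1}\big)$. Now $E_\infty^{0,2}$ is a subquotient of $H^2(\Aut(F_n),A_{2,1}(n)^*)$, which is $0$ for $n\ge 2\cdot 2+6=10$ by Lemma \ref{lemHiAutA21}. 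Therefore $E_\infty^{0,2}=0$, i.e.
\begin{gather*}
    (W^2)^{\GL(n,\Z)}=E_2^{0,2}=\ker\big(d_2\colon E_2^{0,2}\to E_2^{2,1}\big),
\end{gather*}
and I am reduced to showing that this $d_2$ is injective on the $\GL(n,\Z)$-invariant part; combined with $E_\infty^{0,2}=0$, the conclusion follows provided no later incoming differential can hit $(0,2)$ — but there are no entries $E_r^{-r,r-1}$ for $r\ge 2$, so $(0,2)$ receives nothing, and $E_\infty^{0,2}=0$ already forces $E_2^{0,2}=\mathrm{im}(\text{nothing})=0$ once we know $d_2$ restricted there is the only obstruction. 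More carefully: since $(0,2)$ supports no nonzero incoming or (by the vanishing of $E_r^{3,0}$, $r\ge 3$) outgoing differential beyond $d_2$, one gets $E_\infty^{0,2}=E_3^{0,2}$ and $E_3^{0,2}$ is the homology of $E_2^{0,2}$ at the spot $\bullet\xrightarrow{d_2}E_2^{0,2}$ — but $E_2^{-2,3}=0$ so $d_2$ into $(0,2)$ is zero, giving $E_3^{0,2}=\ker d_2\subseteq E_2^{0,2}=(W^2)^{\GL(n,\Z)}$, and I must still argue this kernel is everything, i.e. $d_2$ vanishes on invariants. Here is the point: $E_2^{2,1}=H^2(\GL(n,\Z),W^1)$, and $W^1=H^1(\IA_n,A_{2,1}(n)^*)$ is again an algebraic $\GL(n,\Z)$-representation whose trivial part can be read off from the dual of Theorem \ref{thmH1A21} (for $n$ large $H_1(\IA_n,A_{2,1}(n))$ has no trivial summand), so $W^1$ is nontrivial with no trivial summand and $H^2(\GL(n,\Z),W^1)=0$ for $n\ge 4$ by Borel; thus $E_2^{2,1}=0$ and $d_2\colon E_2^{0,2}\to E_2^{2,1}$ is automatically zero, whence $(W^2)^{\GL(n,\Z)}=E_2^{0,2}=E_\infty^{0,2}=0$.

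The main obstacle is the bookkeeping of which $\GL(n,\Z)$-modules $W^0$ and $W^1$ are: I need that neither contains a trivial summand, so that Borel's vanishing applies to $E_2^{p,0}$ and $E_2^{p,1}$ in the relevant range. For $W^0=(A_{2,1}(n)^*)^{\IA_n}\cong (A_{2,1}(n)/[\text{action ideal}])^*$ this follows from the composition-series description of $A_{2,1}(n)$ recalled in the Introduction (its $\IA_n$-coinvariants dualize to $A_{2,1}(n)/A_{2,2}(n)\cong V_{0,1^3}$, which is nontrivial), and for $W^1$ from Theorem \ref{thmH1A21}. Once these identifications are in place, the numerical bound $n\ge 10$ is exactly what is needed to make $H^2(\Aut(F_n),A_{2,1}(n)^*)=0$ available from Lemma \ref{lemHiAutA21} while all the Borel vanishing statements hold comfortably, and the proposition follows. $\qed$
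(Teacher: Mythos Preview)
Your argument is correct and follows essentially the same route as the paper: identify $W^0\cong V_{1^3,0}$ and $W^1$ via Theorem \ref{thmH1A21} as nontrivial algebraic $\GL(n,\Z)$-representations, apply Borel's vanishing (Theorem \ref{thmBorel}) to kill $E_2^{p,0}$ and $E_2^{p,1}$ in the needed range, and then use $H^2(\Aut(F_n),A_{2,1}(n)^*)=0$ for $n\ge 10$ from Lemma \ref{lemHiAutA21} to force $E_2^{0,2}=E_\infty^{0,2}=0$. Your exposition wanders a bit in the middle (the displayed equation asserting $E_2^{0,2}=\ker d_2$ before you have shown $E_2^{2,1}=0$ is premature, as you yourself note with ``More carefully:''), but the final logic is exactly the paper's.
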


\begin{proof}
    Since we have $H^0(\IA_n,A_{2,1}(n)^*)\cong V_{1^3,0}$, we have
    $E_2^{p,0}=0$ for $n\ge p+1$ by Theorem \ref{thmBorel}.
    By Theorems \ref{thmH1A21}, \ref{thmH1A213} and \ref{thmBorel}, we have $E_2^{p,1}=0$
    for $n\ge p+2$.
    By Lemma \ref{lemHiAutA21}, we have
    $H^2(\Aut(F_n),A_{2,1}(n)^*)=0$ for $n\ge 10$.
    Therefore, by using the spectral sequence \eqref{spseq}, we obtain
    $H^2(\IA_n,A_{2,1}(n)^*)^{\GL(n,\Z)}=0$ for $n\ge 10$.
\end{proof}

\begin{remark}
By the spectral sequence \eqref{spseq}, we also obtain 
\begin{gather}\label{H3coinvA21}
        E_2^{1,2}\oplus \ker(d_2: E_2^{0,3}\to E_2^{2,2})\cong \Q^{\oplus 3}.
\end{gather}
\end{remark}

In what follows, we will study the $\GL(n,\Z)$-invariant part of $H^i(\IA_n,A_{2,1}(n)^*)$ for $i\ge 3$ under the hypothesis that the $\GL(n,\Z)$-representation $H^i(\IA_n,A_{2,1}(n)^*)$ is algebraic.

The following might be well known.

\begin{conjecture}\label{conjalgebraic}
    Suppose that we have a short exact sequence of $\GL(n,\Z)$-representations
    \begin{gather*}
        0\to M'\to M\to M''\to 0,
    \end{gather*}
    where $M'$ and $M''$ are algebraic $\GL(n,\Z)$-representations.
    Then the above short exact sequence splits and we have $M\cong M'\oplus M''$, which implies that $M$ is also an algebraic $\GL(n,\Z)$-representation.
\end{conjecture}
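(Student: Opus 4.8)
The plan is to reduce the statement to a cohomology-vanishing assertion about arithmetic groups. Given a short exact sequence $0\to M'\to M\to M''\to 0$ of $\GL(n,\Z)$-representations with $M',M''$ algebraic, the obstruction to splitting it lies in $\Ext^1_{\Q[\GL(n,\Z)]}(M'',M')\cong H^1(\GL(n,\Z),\Hom_{\Q}(M'',M'))$. The module $\Hom_{\Q}(M'',M'')$—wait, rather $\Hom_\Q(M'',M')$—is again an algebraic $\GL(n,\Z)$-representation, being built from $M''^*\otimes M'$. So by complete reducibility of algebraic representations it decomposes as a direct sum of irreducible algebraic $\GL(n,\Z)$-representations $V_{\underline\lambda}$, one of which may be the trivial representation $V_{0,0}=\Q$. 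The first thing I would do is split off this potential trivial summand and handle the two cases separately.

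For the non-trivial algebraic summands $V_{\underline\lambda}$, Borel's stability and vanishing theorem (Theorem \ref{thmBorel}) gives $H^1(\GL(n,\Z),V_{\underline\lambda})=0$ provided $n\ge 3$, so these contribute nothing to the obstruction. For the trivial summand, if it occurs with some multiplicity $m$, its contribution is $H^1(\GL(n,\Z),\Q)^{\oplus m}$, and again by Borel this vanishes for $n\ge 2$ since the first nonzero positive-degree rational cohomology of $\GL(n,\Z)$ is in degree $5$. Hence for $n$ large enough (any $n\ge 3$ suffices for the whole argument), $\Ext^1_{\Q[\GL(n,\Z)]}(M'',M')=0$, so the sequence splits and $M\cong M'\oplus M''$ is algebraic. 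I would also note that one should first check that $M'$, $M''$ finite-dimensional forces $M$ finite-dimensional, so that $\Hom_\Q(M'',M')$ is genuinely a finite-dimensional algebraic representation and the decomposition into finitely many irreducibles is valid.

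The main obstacle—indeed the reason this is phrased as a conjecture rather than proved outright—is the implicit smallness hypothesis on $n$: Borel's vanishing theorem only applies in a stable range ($*\le n-2$ for non-trivial coefficients), so the argument above is clean only for $n$ sufficiently large relative to the dimensions involved, and in the applications of the conjecture in Section \ref{secH2} one may want it for specific small $n$ where the stable range is not met. For small $n$ the obstruction group $H^1(\GL(n,\Z),V_{\underline\lambda})$ need not vanish, and one would have to analyze the finitely many relevant $\underline\lambda$ by hand. I would therefore state the clean version of the result for $n$ in the stable range as a lemma, prove it as above, and leave the uniform statement as a conjecture exactly because the small-$n$ behaviour of these Ext-groups is not controlled by the tools available here.
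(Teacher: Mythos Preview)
The paper does not prove this statement; it is deliberately left as a conjecture (prefaced by ``the following might be well known''), and is only invoked hypothetically in Section~\ref{secH2}. So there is no paper-proof to compare against.

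Your reduction is correct: the obstruction to splitting is the class in $\Ext^1_{\Q[\GL(n,\Z)]}(M'',M')\cong H^1(\GL(n,\Z),\Hom_\Q(M'',M'))$, and $\Hom_\Q(M'',M')\cong (M'')^*\otimes M'$ is again algebraic, hence a finite direct sum of $V_{\underline\lambda}$'s. Applying Theorem~\ref{thmBorel} in degree $1$ kills each non-trivial summand for $n\ge 3$ and the trivial summand for $n\ge 2$. This genuinely proves the conjecture for all $n\ge 3$, which already covers every use of it in Section~\ref{secH2} (where $n\ge 9$ throughout).

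Where your write-up wobbles is in the last paragraph. You say the range is ``clean only for $n$ sufficiently large relative to the dimensions involved,'' but that is not so: in the form of Borel vanishing recorded as Theorem~\ref{thmBorel} (with the Li--Sun and Kupers--Miller--Patzt ranges), the bound $*\le n-2$ is uniform in the coefficient module $V_{\underline\lambda}$ and does not depend on the bipartition. So once $n\ge 3$ you are done, full stop; there is no hidden dependence on the size of $M'$ or $M''$. The only cases your argument does not cover are $n\le 2$, which are not needed in the paper. I would therefore rewrite your conclusion to say plainly that the conjecture holds for $n\ge 3$ by this argument, and drop the hedging about dimensions.
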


Under the assumption of Conjecture \ref{conjalgebraic}, if $H^i(\IA_n,\Q)$ is algebraic, then so is $H^i(\IA_n,A_{2,1}(n)^*)$.

\begin{proposition}
    Suppose that there exists $n_2\ge 0$ such that $H^2(\IA_n,A_{2,1}(n)^*)$ is algebraic for $n\ge n_2$. 
    Then we have
    $$H^3(\IA_n,A_{2,1}(n)^*)^{\GL(n,\Z)}\cong \Q^{\oplus 3}$$
    for $n\ge \max(12, n_2)$.

    Moreover, if for any $i\ge 2$, there exist $n_i\ge 0$ such that $H^i(\IA_n,A_{2,1}(n)^*)$ are algebraic for $n\ge n_i$,
    then we have
    \begin{gather*}
        H^i(\IA_n,A_{2,1}(n)^*)^{\GL(n,\Z)}\cong (H^{i-3}(\IA_n,\Q)^{\GL(n,\Z)})^{\oplus 3}
    \end{gather*}
    for $n\ge \max(2i+6, n_2,\dots, n_i)$.
\end{proposition}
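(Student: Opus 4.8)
The plan is to run the Hochschild--Serre spectral sequence \eqref{spseq} together with Borel's stability and vanishing theorem (Theorem \ref{thmBorel}), exactly as in the proof of Proposition \ref{propH2coinvA21}, but now pushing the argument into higher cohomological degree $i$. The key structural input is that, under the algebraicity hypotheses, for $q \le i-1$ the $\GL(n,\Z)$-representation $H^q(\IA_n,A_{2,1}(n)^*)$ decomposes as a finite direct sum of $V_{\underline\lambda}$'s with $\underline\lambda \ne (0,0)$, plus possibly a trivial summand coming only from $q=0$ (since $H^0(\IA_n,A_{2,1}(n)^*)\cong V_{1^3,0}$ is non-trivial, there is in fact no trivial summand at all for $q\le i-1$ as long as $i-1 \ge 0$). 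Hence by Theorem \ref{thmBorel} we get $E_2^{p,q} = H^p(\GL(n,\Z), H^q(\IA_n,A_{2,1}(n)^*)) = 0$ for all $0 \le q \le i-1$ and all $p$ in the stable range $p \le n-2$; in particular $E_2^{p,q}=0$ whenever $p+q \le i$ except when $q = i$.

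First I would make precise the algebraicity bootstrapping: assuming Conjecture \ref{conjalgebraic} and that $H^j(\IA_n,\Q)$ is algebraic for $n \ge n_j'$ (or more directly just assuming the stated hypotheses $H^j(\IA_n,A_{2,1}(n)^*)$ algebraic for $n\ge n_j$), the remark after Conjecture \ref{conjalgebraic} gives that $H^j(\IA_n,A_{2,1}(n)^*)$ is algebraic. Then I would observe that in the stable range all differentials into and out of $E_2^{p,q}$ with $q<i$ vanish because those groups are zero, so the only contribution to the total degree-$i$ term of the abutment along the diagonal $p+q=i$ comes from $E_\infty^{0,i}$, which equals $E_2^{0,i} = H^i(\IA_n,A_{2,1}(n)^*)^{\GL(n,\Z)}$ since every differential $d_r$ leaving $E_r^{0,i}$ lands in $E_r^{r,i-r+1}$ with $i-r+1 \le i-1$, hence in a zero group, and every differential entering $E_r^{0,i}$ comes from a negative column and is zero. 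Therefore $H^i(\IA_n,A_{2,1}(n)^*)^{\GL(n,\Z)} \cong H^i(\Aut(F_n),A_{2,1}(n)^*)$ in the appropriate range.

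Next I would compute the right-hand side. By the proof of Lemma \ref{lemHiAutA21} — which only uses Theorem \ref{thmH*Aut} via the long exact sequence for $0\to A_{2,2}(n)\to A_{2,1}(n)\to A_{2,1}(n)/A_{2,2}(n)\to 0$ — one has $H^i(\Aut(F_n),A_{2,2}(n)^*)=0$ for $n\ge 2i+5$ and $H^i(\Aut(F_n),(A_{2,1}(n)/A_{2,2}(n))^*)\cong \Q^{\oplus 3}\otimes H^{i-3}(\Aut(F_n),\Q)$ stably, since $(A_{2,1}(n)/A_{2,2}(n))^* \cong V_{1^3,0}$ is, stably in the sense of Theorem \ref{thmH*Aut}, a sum of copies of $H^{\otimes 3}$-type pieces whose stable $\Aut(F_n)$-cohomology is concentrated in degree $3$ and has dimension $3$ there; tensoring by the unit $H^*(\Aut(F_n),\Q)$ shifts this. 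Combined with $H^*(\Aut(F_n),\Q)\cong H^*(\GL(n,\Z),\Q)$ stably (which is itself a consequence of Theorem \ref{thmH*Aut} at $q=0$) and with the fact that $H^{i-3}(\GL(n,\Z),\Q)^{\GL(n,\Z)} = H^{i-3}(\GL(n,\Z),\Q)$ while stably $H^{i-3}(\IA_n,\Q)^{\GL(n,\Z)}$ receives all of $H^{i-3}(\GL(n,\Z),\Q)$, I would identify $H^i(\Aut(F_n),A_{2,1}(n)^*) \cong (H^{i-3}(\IA_n,\Q)^{\GL(n,\Z)})^{\oplus 3}$ in the range $n\ge \max(2i+6, n_2,\dots,n_i)$. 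For $i=3$ this specializes to $H^3(\IA_n,A_{2,1}(n)^*)^{\GL(n,\Z)}\cong \Q^{\oplus 3}$; note that here one only needs algebraicity of $H^2$, and the numerical bound becomes $n\ge\max(12,n_2)$ once one tracks the $i=3$ instances of the stable ranges in Theorems \ref{thmH*Aut}, \ref{thmBorel} and Lemma \ref{lemHiAutA21}.

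The main obstacle, I expect, is bookkeeping the stable ranges so that all the cited vanishing statements apply simultaneously on a single explicit interval of $n$: one must intersect the range $p\le n-2$ (resp. $p\le n-1$ for trivial coefficients) needed for Theorem \ref{thmBorel} across all columns $q\le i-1$, the range $n\ge 2i+c$ from Theorem \ref{thmH*Aut}/Lemma \ref{lemHiAutA21}, and the $n_j$'s from the algebraicity hypotheses, and then check that the $E_2$-page is genuinely concentrated enough that no higher differential can interfere with $E_2^{0,i}$. A secondary subtlety is the identification of $H^{i-3}(\Aut(F_n),(A_{2,1}(n)/A_{2,2}(n))^*)$ and its $\GL$-invariant comparison with $H^{i-3}(\IA_n,\Q)^{\GL(n,\Z)}$ — this is again a spectral-sequence argument in lower degree, which is exactly why the algebraicity hypotheses on the intermediate groups $H^j(\IA_n,A_{2,1}(n)^*)$ for $2\le j\le i$ are imposed — but it is formally parallel to the degree-$i$ argument and should not present new difficulties.
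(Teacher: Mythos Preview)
Your treatment of the first statement ($i=3$) is essentially the paper's argument: once you know $H^q(\IA_n,A_{2,1}(n)^*)$ has no trivial summand for $q=0,1,2$, the column $E_2^{0,3}$ is the only survivor on the diagonal $p+q=3$ and equals $H^3(\Aut(F_n),A_{2,1}(n)^*)\cong\Q^{\oplus 3}$. You should, however, cite explicitly Theorem~\ref{thmH1A21} for $q=1$ and Proposition~\ref{propH2coinvA21} for $q=2$; the parenthetical ``there is in fact no trivial summand at all for $q\le i-1$'' is asserted without justification beyond $q=0$.

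For the second statement your argument breaks down. The claim that $H^q(\IA_n,A_{2,1}(n)^*)$ has no trivial summand for every $q\le i-1$ is \emph{false} as soon as $i\ge 4$: by the first statement itself, $H^3(\IA_n,A_{2,1}(n)^*)^{\GL(n,\Z)}\cong\Q^{\oplus 3}$, so $E_2^{0,3}\neq 0$. Thus your conclusion $H^i(\IA_n,A_{2,1}(n)^*)^{\GL(n,\Z)}\cong H^i(\Aut(F_n),A_{2,1}(n)^*)$ is not justified, and in fact by Lemma~\ref{lemHiAutA21} the right-hand side vanishes for $i\neq 3$, which would force $H^{i-3}(\IA_n,\Q)^{\GL(n,\Z)}=0$ for all $i\neq 3$ --- something neither known nor claimed. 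The subsequent identification $H^i(\Aut(F_n),A_{2,1}(n)^*)\cong (H^{i-3}(\IA_n,\Q)^{\GL(n,\Z)})^{\oplus 3}$ is therefore also wrong: you have conflated $\Aut(F_n)$ with $\IA_n$ here.

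The paper instead defers to the inductive scheme of \cite[Lemma~7.7]{HK}. Under the algebraicity hypotheses one has $E_2^{p,q}\cong H^p(\GL(n,\Z),\Q)\otimes H^q(\IA_n,A_{2,1}(n)^*)^{\GL(n,\Z)}$ in the stable range, and the same holds for the spectral sequence with coefficients $\Q$. One then compares the two spectral sequences degree by degree, using that $H^*(\Aut(F_n),A_{2,1}(n)^*)$ is $\Q^{\oplus 3}$ concentrated in degree $3$ while $H^*(\Aut(F_n),\Q)$ is $\Q$ concentrated in degree $0$, to show inductively that $\dim H^q(\IA_n,A_{2,1}(n)^*)^{\GL(n,\Z)}=3\cdot\dim H^{q-3}(\IA_n,\Q)^{\GL(n,\Z)}$. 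The point is precisely that the trivial summands appearing in lower rows must be tracked, not ignored.
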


\begin{proof}
    The first statement follows from \eqref{H3coinvA21}, Theorem \ref{thmBorel} and Proposition \ref{propH2coinvA21}.
    The second statement follows in an argument similar to that of the proof of \cite[Lemma 7.7]{HK}.
\end{proof}

It seems natural to propose the following conjecture.
By \emph{polynomial $\Aut(F_n)$-modules}, we mean $\Aut(F_n)$-modules that are obtained from algebraic $\GL(n,\Z)$-representations by extensions.
The spaces of Jacobi diagrams are examples of polynomial $\Aut(F_n)$-modules.

\begin{conjecture}\label{conjcoinv}
For any polynomial $\Aut(F_n)$-module $M$, we have
    \begin{gather*}
        H^*(\Aut(F_n),M)\otimes H^*(\IA_n,\Q)^{\GL(n,\Z)}\cong H^*(\IA_n,M)^{\GL(n,\Z)}
    \end{gather*}
for sufficiently large $n$ with respect to the cohomological degree.
\end{conjecture}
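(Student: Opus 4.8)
A possible strategy for Conjecture \ref{conjcoinv} is to analyse the Lyndon--Hochschild--Serre spectral sequence
\[
E_2^{p,q}=H^p(\GL(n,\Z),H^q(\IA_n,M))\ \Longrightarrow\ H^{p+q}(\Aut(F_n),M)
\]
of the extension $1\to\IA_n\to\Aut(F_n)\to\GL(n,\Z)\to 1$ and to compare it with the corresponding spectral sequence for the trivial coefficients $M=\Q$. The first step is to note that, for a polynomial $\Aut(F_n)$-module $M$, the cohomology $H^q(\IA_n,M)$ ought to be an algebraic $\GL(n,\Z)$-representation in a stable range: using the finite filtration of $M$ by algebraic layers $V_{\underline\lambda}$, the long exact cohomology sequences of the successive extensions, and the splitting principle of Conjecture \ref{conjalgebraic}, this reduces to the algebraicity of $H^q(\IA_n,\Q)$ itself, since $\IA_n$ acts trivially on each algebraic layer and hence $H^q(\IA_n,V_{\underline\lambda})\cong H^q(\IA_n,\Q)\otimes V_{\underline\lambda}$. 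One would therefore run the argument under the working hypothesis that $H^q(\IA_n,\Q)$ is algebraic for $n$ large relative to $q$ (cf.\ the discussion after Conjecture \ref{conjalgebraic}), which then propagates to algebraicity of $H^q(\IA_n,M)$ for all polynomial $M$.

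Granting this, Borel's vanishing theorem (Theorem \ref{thmBorel}) annihilates every non-trivial algebraic isotypic summand of $H^q(\IA_n,M)$ in positive $\GL(n,\Z)$-degree within the stable range, so that there
\[
E_2^{p,q}(M)\ \cong\ H^p(\GL(n,\Z),\Q)\otimes H^q(\IA_n,M)^{\GL(n,\Z)},
\]
and similarly $E_2^{p,q}(\Q)\cong H^p(\GL(n,\Z),\Q)\otimes H^q(\IA_n,\Q)^{\GL(n,\Z)}$, the ring $H^*(\GL(n,\Z),\Q)$ being stably the exterior algebra on generators $x_i$ of degree $4i+1$ by Theorem \ref{thmBorel}. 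Since $H^*(\Aut(F_n),\Q)=\Q$ in the stable range (Galatius's theorem on the stable rational cohomology of $\Aut(F_n)$), the spectral sequence for $\Q$ must be acyclic, collapsing to $\Q$ after the transgressions $d_{4i+1}(z_i)=x_i$ for classes $z_i\in H^{4i}(\IA_n,\Q)^{\GL(n,\Z)}$; a dimension count in each degree then forces $H^*(\IA_n,\Q)^{\GL(n,\Z)}$ to be the polynomial algebra $\Q[z_1,z_2,\dots]$ in the stable range.

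Because the spectral sequence for $M$ is a module over the one for $\Q$ via the cup product $H^*(\IA_n,\Q)\otimes H^*(\IA_n,M)\to H^*(\IA_n,M)$, the differentials leaving the first column $E_2^{0,*}(M)=H^*(\IA_n,M)^{\GL(n,\Z)}$ are controlled by multiplication by the classes $z_i$. Feeding in the independent computation of $H^*(\Aut(F_n),M)$ for polynomial $M$ furnished by Djament--Vespa and Randal-Williams (Theorem \ref{thmH*Aut}, used as in Lemma \ref{lemHiAutA21}), one identifies $E_\infty(M)$ with the associated graded of $H^*(\Aut(F_n),M)$ and runs the Koszul bookkeeping in reverse to conclude that $H^*(\IA_n,M)^{\GL(n,\Z)}$ is free over $H^*(\IA_n,\Q)^{\GL(n,\Z)}=\Q[z_i]$ on a copy of $H^*(\Aut(F_n),M)$. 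Packaging the edge homomorphism $H^*(\Aut(F_n),M)\to H^*(\IA_n,M)^{\GL(n,\Z)}$ together with this cup-product action into a map
\[
\Phi\colon H^*(\Aut(F_n),M)\otimes H^*(\IA_n,\Q)^{\GL(n,\Z)}\ \longrightarrow\ H^*(\IA_n,M)^{\GL(n,\Z)},
\]
the collapse analysis then shows that $\Phi$ is an isomorphism in the stable range, which is exactly the assertion of Conjecture \ref{conjcoinv}.

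The main obstacle is twofold. First, and most seriously, the algebraicity of $H^q(\IA_n,\Q)$ as a $\GL(n,\Z)$-representation is open: only the Albanese part is currently under control (see \cite{KatadaIA}), and without algebraicity Borel's theorem cannot be used to collapse the $E_2$-page, so this hypothesis is genuinely load-bearing. Second, even granting algebraicity, turning the Koszul bookkeeping into a proof requires handling all the differentials $d_{4i+1}$ on the first column simultaneously and verifying that the $\Q[z_i]$-module structure on $H^*(\IA_n,M)^{\GL(n,\Z)}$ is actually free, not merely that it gives the right dimension in each degree; keeping the stable range uniform in the cohomological degree, which is what ``sufficiently large $n$'' in the statement means, is the delicate quantitative point, and the argument in \cite[Lemma 7.7]{HK} for a special case is the natural model to generalise. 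A homotopy-theoretic reformulation --- comparing the rational cohomology of the homotopy fibre of $B\Aut(F_n)^+\to B\GL(n,\Z)^+$, a polynomial algebra by Borel and Galatius, with its twist by the polynomial functor underlying $M$ --- might streamline the second half but still rests on the first.
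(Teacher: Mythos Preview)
The statement in question is a \emph{conjecture} in the paper, not a theorem with a proof; the paper does not prove it. What the paper does provide is the Remark immediately following the conjecture, which sketches exactly the conditional approach you have written out: define the comparison map $\xi$ as the composite of restriction $H^*(\Aut(F_n),M)\to H^*(\IA_n,M)^{\GL(n,\Z)}$ with the cup product by $H^*(\IA_n,\Q)^{\GL(n,\Z)}$, and then argue via the Hochschild--Serre spectral sequence and Borel vanishing, modelled on \cite[Lemma 7.7]{HK}, that $\xi$ is an isomorphism \emph{provided} $H^*(\IA_n,\Q)$ is algebraic and Conjecture \ref{conjalgebraic} holds.

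Your proposal is thus in full agreement with the paper's own heuristic, and is in fact somewhat more fleshed out: you make explicit the role of Galatius's theorem $H^*(\Aut(F_n),\Q)\cong\Q$ in forcing the transgression pattern on the trivial-coefficient spectral sequence, and you articulate the expected polynomial structure $H^*(\IA_n,\Q)^{\GL(n,\Z)}\cong\Q[z_1,z_2,\dots]$. You also correctly identify the two genuine obstacles --- the open algebraicity of $H^*(\IA_n,\Q)$, which both you and the paper flag as the load-bearing hypothesis, and the careful bookkeeping of differentials and stable ranges needed to upgrade the Koszul pattern to freeness. Neither you nor the paper claims to remove these obstacles, so your write-up should be read as a strategy outline consistent with the paper's Remark, not as a proof.
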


In the case of $H^{\otimes p}\otimes (H^*)^{\otimes q}$, Conjecture \ref{conjcoinv} follows from our previous conjectures \cite[Conjecture 12.6]{KatadaIA} and \cite[Conjecture 7.4]{HK}.

\begin{remark}
The inclusion map $\iota: \IA_n\hookrightarrow \Aut(F_n)$ induces a homomorphism
$$\iota^i: H^i(\Aut(F_n),M)\to  H^i(\IA_n,M)^{\GL(n,\Z)}.$$
On the other hand, we have the cup product map for cohomology
$$
\cup: H^i(\IA_n,M)^{\GL(n,\Z)}\otimes H^j(\IA_n,\Q)^{\GL(n,\Z)}\to H^{i+j}(\IA_n,M)^{\GL(n,\Z)}.
$$
Therefore, we have a homomorphism
$$
\xi=\cup \circ (\iota^i \otimes \id) :H^i(\Aut(F_n),M)\otimes  H^j(\IA_n,\Q)^{\GL(n,\Z)}\to
H^{i+j}(\IA_n,M)^{\GL(n,\Z)}.
$$
By using an argument similar to that of the proof of \cite[Lemma 7.7]{HK}, we can prove that if $H^*(\IA_n,\Q)$ is algebraic and if Conjecture \ref{conjalgebraic} holds, then the map $\xi$ is an isomorphism for sufficiently large $n$ with respect to $i+j$, and therefore, Conjecture \ref{conjcoinv} holds.
\end{remark}

\subsection{On the $\GL(n,\Z)$-invariant part of $H^*(\IA_n,A''_2(n)^*)$}

In a way similar to Lemma \ref{lemHiAutA21}, for $n\ge 2i+7$, we obtain 
\begin{gather*}
     H^i(\Aut(F_n),A''_2(n)^*)=0, \quad i\neq 3,4,
\end{gather*}
and an exact sequence
\begin{gather*}
    0\to H^3(\Aut(F_n),A''_2(n)^*)\to H^3(\Aut(F_n),A_{2,1}(n)^*)\cong \Q^{\oplus 3}\\
    \to H^4(\Aut(F_n),(A''_2(n)/A_{2,1}(n))^*)\cong \Q^{\oplus 2}\to H^4(\Aut(F_n),A''_2(n)^*)\to 0.
\end{gather*}

\begin{problem}
    Determine $H^3(\Aut(F_n),A''_2(n)^*)$ and $H^4(\Aut(F_n),A''_2(n)^*)$.
\end{problem}

In a way similar to Proposition \ref{propH2coinvA21}, we obtain the vanishing of the $\GL(n,\Z)$-invariant part of the second cohomology of $\IA_n$ with coefficients in $A''_2(n)^*$.

\begin{proposition}
    For $n\ge 11$, we have
    \begin{gather*}
        H^2(\IA_n,A''_2(n)^*)^{\GL(n,\Z)}=0.
    \end{gather*}
\end{proposition}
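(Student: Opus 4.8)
The plan is to run the Hochschild--Serre spectral sequence argument of Proposition \ref{propH2coinvA21}, now with coefficients in $A''_2(n)^*$. Consider
\begin{gather*}
    E_2^{p,q}=H^p(\GL(n,\Z),H^q(\IA_n,A''_2(n)^*))\Rightarrow H^{p+q}(\Aut(F_n),A''_2(n)^*)
\end{gather*}
associated to $1\to \IA_n\to \Aut(F_n)\to \GL(n,\Z)\to 1$. Since $H^2(\IA_n,A''_2(n)^*)^{\GL(n,\Z)}=E_2^{0,2}$ lies on the edge, it is enough to show that $E_2^{0,2}$ survives to $E_\infty^{0,2}$ and that $E_\infty^{0,2}=0$; the latter is immediate from the vanishing $H^2(\Aut(F_n),A''_2(n)^*)=0$ for $n\ge 11$ recorded above (the case $i=2$ of $H^i(\Aut(F_n),A''_2(n)^*)=0$ for $n\ge 2i+7$, $i\neq 3,4$).

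The first step is to identify the relevant low rows. For any $\Q[\IA_n]$-module $M$ the tensor--hom adjunction over the field $\Q$ gives a $\GL(n,\Z)$-equivariant isomorphism $H^q(\IA_n,M^*)\cong H_q(\IA_n,M)^*$. With $q=0$ this yields $H^0(\IA_n,A''_2(n)^*)\cong (A''_2(n)/A_{2,1}(n))^*\cong V_{2^2,0}$, a non-trivial algebraic $\GL(n,\Z)$-representation, so Theorem \ref{thmBorel} gives $E_2^{p,0}=0$ for $p\le n-2$; in particular $E_2^{3,0}=0$ once $n\ge 5$. With $q=1$, Theorems \ref{thmH1A2''} and \ref{thmH1A2''3} identify $H^1(\IA_n,A''_2(n)^*)$ with the dual of $H_1(\IA_n,A''_2(n))$, a finite direct sum of non-trivial algebraic $\GL(n,\Z)$-representations; hence $E_2^{p,1}=0$ for $p\le n-2$ by Theorem \ref{thmBorel}, and in particular $E_2^{2,1}=0$ once $n\ge 4$.

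Finally, I would assemble these facts. The only differentials that can leave $E_r^{0,2}$ are $d_2\colon E_2^{0,2}\to E_2^{2,1}$ and $d_3\colon E_3^{0,2}\to E_3^{3,0}$ (the higher ones vanish for degree reasons), and for $n\ge 5$ their targets are subquotients of $E_2^{2,1}=0$ and $E_2^{3,0}=0$; since no differentials enter the first column, this forces $E_\infty^{0,2}=E_2^{0,2}$. As $E_\infty^{0,2}$ is a subquotient of $H^2(\Aut(F_n),A''_2(n)^*)$, which vanishes for $n\ge 11$, we conclude $H^2(\IA_n,A''_2(n)^*)^{\GL(n,\Z)}=E_2^{0,2}=0$ for $n\ge 11$.

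No serious obstacle is expected, since every ingredient --- the vanishing of $H^2(\Aut(F_n),A''_2(n)^*)$, Borel's range, and the computation of $H_1(\IA_n,A''_2(n))$ --- is already in hand; the only point requiring a little care is the universal-coefficient identification $H^q(\IA_n,M^*)\cong H_q(\IA_n,M)^*$ together with the observation that the resulting $H^0$ and $H^1$ are algebraic and non-trivial, so that Borel's vanishing applies in the needed range.
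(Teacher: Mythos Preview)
Your proof is correct and follows exactly the approach indicated in the paper, which simply says to argue ``in a way similar to Proposition \ref{propH2coinvA21}''; you have supplied the details of that argument, using the same spectral sequence, the same Borel vanishing on the rows $q=0,1$, and the same abutment vanishing $H^2(\Aut(F_n),A''_2(n)^*)=0$ for $n\ge 11$.
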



\subsection{Some quotient representation of $H_2(\IA_n,A_{2,1}(n))$}

Here we observe some quotient space of $H_2(\IA_n,A_{2,1}(n))$ by using the second Albanese homology of $\IA_n$, which was determined by Pettet \cite{Pettet}.
In particular, $H_2(\IA_n,A_{2,1}(n))$ is non-trivial.

\begin{lemma}[Pettet \cite{Pettet}]\label{lemPettet}
    We have
    \begin{gather*}
        H^A_2(\IA_n,\Q)\cong
        \begin{cases}
            V_{1^4,1^2}\oplus V_{21^2,2}\oplus V_{2^2,1^2}\oplus V_{21,1}\oplus V_{1^3,1}^{\oplus 2}\oplus V_{1^2,0}^{\oplus 2} & n\ge 6\\
            V_{21^2,2}\oplus V_{2^2,1^2}\oplus V_{21,1}\oplus V_{1^3,1}^{\oplus 2}\oplus V_{1^2,0}^{\oplus 2} & n= 5\\
            V_{21^2,2}\oplus V_{2^2,1^2}\oplus V_{21,1}\oplus V_{1^3,1}\oplus V_{1^2,0}^{\oplus 2} & n=4\\
            V_{21,1}\oplus  V_{1^2,0} & n=3.
        \end{cases}
    \end{gather*}
\end{lemma}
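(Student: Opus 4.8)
The plan is to realize $H^A_2(\IA_n,\Q)$ as a subrepresentation of the known $\GL(n,\Z)$-module ${\bigwedge}^2 H_1(\IA_n,\Q)$, to identify the quotient with the degree-two graded piece of the lower central series of $\IA_n$, and then to pin that quotient down with the Johnson homomorphism and representation theory.

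First I would apply the five-term exact sequence in rational homology to the extension $1\to [\IA_n,\IA_n]\to \IA_n\to \IA_n^{\ab}\to 1$. We have $H_1(\IA_n,\Q)\cong H^*\otimes {\bigwedge}^2 H$ and $H_2(\IA_n^{\ab},\Q)\cong {\bigwedge}^2 H_1(\IA_n,\Q)$, and the last arrow $H_1(\IA_n,\Q)\to H_1(\IA_n^{\ab},\Q)$ is an isomorphism; hence the five-term sequence collapses to a short exact sequence of $\GL(n,\Z)$-representations
\begin{gather*}
 0\to H^A_2(\IA_n,\Q)\to {\bigwedge}^2 H_1(\IA_n,\Q)\to \opegr_2(\IA_n)\otimes\Q\to 0,
\end{gather*}
where $\opegr_2(\IA_n)=\gamma_2\IA_n/\gamma_3\IA_n$ is the second lower central series quotient and the left-hand term is, by definition, the second Albanese homology. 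Next I would decompose the middle term into irreducibles, by combining the plethysm ${\bigwedge}^2(A\otimes B)\cong(\Sym^2 A\otimes {\bigwedge}^2 B)\oplus({\bigwedge}^2 A\otimes \Sym^2 B)$ with $A=H^*$ and $B={\bigwedge}^2 H$, the identities $\Sym^2 H^*\cong V_{0,2}$, ${\bigwedge}^2 H^*\cong V_{0,1^2}$, ${\bigwedge}^2({\bigwedge}^2 H)\cong V_{21^2,0}$, $\Sym^2({\bigwedge}^2 H)\cong V_{2^2,0}\oplus V_{1^4,0}$, and the decomposition formula \eqref{Koike}. One must also record which constituents $V_{\underline\lambda}$ vanish when $n<l(\lambda^+)+l(\lambda^-)$, which is exactly what produces the case distinctions for $n=3,4,5$.

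It then remains to compute $\opegr_2(\IA_n)\otimes\Q$ as a $\GL(n,\Z)$-module. The second Johnson (Andreadakis) homomorphism induces a $\GL(n,\Z)$-equivariant map $\opegr_2(\IA_n)\otimes\Q\to H^*\otimes\Lie_3(H)$, where $\Lie_3(H)$ is the degree-three part of the free Lie algebra on $H$; granting that this map is injective, its image is the submodule spanned by the values on the pairwise brackets of the Magnus generators $g_{a,b}$, $f_{a,b,c}$. Using Fox derivatives one writes down the first Johnson homomorphism on these generators, evaluates the second on $[g_{a,b},g_{c,d}]$, $[g_{a,b},f_{c,d,e}]$ and $[f_{a,b,c},f_{a',b',c'}]$, and decomposes the span of the resulting vectors via \eqref{Koike}. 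Subtracting this from the decomposition of ${\bigwedge}^2 H_1(\IA_n,\Q)$ — the short exact sequence splits automatically, both outer terms being semisimple algebraic $\GL(n,\Z)$-representations — yields the stated formula. Equivalently, one may argue cohomologically, using $H^A_2(\IA_n,\Q)\cong\bigl({\bigwedge}^2 H^1(\IA_n,\Q)/\ker(\cup)\bigr)^*$ and computing $\ker\bigl(\cup\colon{\bigwedge}^2 H^1(\IA_n,\Q)\to H^2(\IA_n,\Q)\bigr)$ directly with explicit $1$-cocycles on $\IA_n$.

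The main obstacle is this last step: one must show that the map $\opegr_2(\IA_n)\otimes\Q\to H^*\otimes\Lie_3(H)$ is injective — equivalently that the lower central series and Andreadakis filtrations of $\IA_n$ agree rationally in degree $2$ — and that no relations beyond the obvious ones hold among the brackets of the Magnus generators, so that the image of the Johnson homomorphism genuinely computes $\opegr_2(\IA_n)\otimes\Q$. Once that is in place, what remains is the plethysm and Littlewood--Richardson bookkeeping together with the low-rank degeneracy analysis, all of which is routine.
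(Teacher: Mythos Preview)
The paper does not prove this lemma at all --- it is stated with attribution to Pettet and used as input for Proposition~\ref{propH2A21}, with no argument supplied. There is therefore nothing in the paper to compare your sketch against.

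That said, your outline is a correct route to the result. The five-term sequence does collapse to the short exact sequence you write: the coinvariants term $H_0(\IA_n^{\ab},H_1([\IA_n,\IA_n],\Q))$ is indeed $\opegr_2(\IA_n)\otimes\Q$, since $[[\IA_n,\IA_n],[\IA_n,\IA_n]]\subset\gamma_4\subset\gamma_3$ and taking coinvariants under the conjugation action kills exactly $\gamma_3/\gamma_4$-type elements. The obstacle you isolate --- injectivity of the degree-$2$ Johnson map, equivalently rational agreement of the lower central and Andreadakis filtrations in degree $2$ --- is precisely the nontrivial input. Pettet's own argument is the cohomological dual you mention at the end: she works with the cup product $\cup\colon{\bigwedge}^2 H^1(\IA_n,\Q)\to H^2(\IA_n,\Q)$ directly, determining its image by evaluating explicit cocycles on commutators of Magnus generators, which amounts to the same bookkeeping from the other side.
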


\begin{proposition}\label{propH2A21}
For $n\ge 9$, we have the quotient vector space of $H_2(\IA_n,A_{2,1}(n))$ that is isomorphic to
    \begin{gather}\label{H2A21Alb}
        \begin{split}
        &V_{1^4,1^5}\oplus V_{1^4,21^3}\oplus V_{1^4,2^21}
        \oplus V_{21^2,21^3} \oplus V_{21^2,31^2}
        \oplus V_{2^2,1^5}\oplus V_{2^2,21^3}\oplus V_{2^2,2^21}\\
        &\oplus V_{1^3,1^4}^{\oplus 3}\oplus V_{1^3,21^2}^{\oplus 4}\oplus V_{1^3,2^2}\oplus V_{1^3,31}
        \oplus V_{21,1^4}^{\oplus 2}\oplus V_{21,21^2}^{\oplus 3}\oplus V_{21,2^2}\oplus V_{21,31}\\
        &\oplus V_{1^2,1^3}^{\oplus 5}\oplus V_{1^2,21}^{\oplus 5}
        \oplus V_{2,1^3}\oplus V_{2,21}^{\oplus 2}\oplus V_{2,3}
        \oplus V_{1,1^2}^{\oplus 3}\oplus V_{1,2}^{\oplus 2}\oplus V_{0,1}\\
        &\oplus V_{0,1^5}^{\oplus 2}\oplus V_{0,21^3}^{\oplus 2}\oplus V_{0,2^21}^{\oplus 2}.
        \end{split}
    \end{gather}
Moreover, if Conjecture \ref{conjalgebraic} holds, then $H_2(\IA_n,A_{2,1}(n))$ has the above vector space as quotient $\GL(n,\Z)$-representations.
\end{proposition}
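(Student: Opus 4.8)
The strategy is to produce a large, explicitly computable algebraic $\GL(n,\Z)$-representation onto which $H_2(\IA_n,A_{2,1}(n))$ surjects, using the long exact sequence \eqref{H1A21} of the short exact sequence $0\to A_{2,2}(n)\to A_{2,1}(n)\xrightarrow{\pi}A_{2,1}(n)/A_{2,2}(n)\to 0$ together with Pettet's computation of the second Albanese homology.

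First I would observe that exactness of
\[
H_2(\IA_n,A_{2,2}(n))\to H_2(\IA_n,A_{2,1}(n))\xrightarrow{\pi_*}H_2(\IA_n,A_{2,1}(n)/A_{2,2}(n))\xrightarrow{\partial'_2}H_1(\IA_n,A_{2,2}(n))
\]
shows that $H_2(\IA_n,A_{2,1}(n))$ surjects $\GL(n,\Z)$-equivariantly onto $\ker(\partial'_2)$. Since $\IA_n$ acts trivially on $A_{2,1}(n)/A_{2,2}(n)\cong V_{0,1^3}$, we have $H_2(\IA_n,A_{2,1}(n)/A_{2,2}(n))\cong V_{0,1^3}\otimes H_2(\IA_n,\Q)$. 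Moreover the $\IA_n$-action on $A_{2,1}(n)$ factors through the abelianization: the bracket map of Lemma \ref{bracketmap} is additive in its second variable because $[u,fg]-[u,f]-[u,g]=[\,[u,f],g\,]\in A_{2,3}(n)=0$, so it induces $\rho\colon H_1(\IA_n,\Q)\to\Hom_\Q(A_{2,1}(n)/A_{2,2}(n),A_{2,2}(n))$, and $\partial'_2$ is the cap product with $\rho$ (the extension class in $H^1(\IA_n,\Hom_\Q(A_{2,1}(n)/A_{2,2}(n),A_{2,2}(n)))$). By naturality of the cap product along $\IA_n\to H_1(\IA_n;\Q)$, cap product with this degree-one class sees a $2$-cycle only through its image in the second Albanese homology, so $\partial'_2$ factors through $V_{0,1^3}\otimes H^A_2(\IA_n,\Q)$, inducing a map $\bar\partial'_2\colon V_{0,1^3}\otimes H^A_2(\IA_n,\Q)\to H_1(\IA_n,A_{2,2}(n))$ with $H_2(\IA_n,A_{2,1}(n))\twoheadrightarrow\ker(\bar\partial'_2)$.

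Next I would compute $\ker(\bar\partial'_2)$ for $n\ge 9$. By Lemma \ref{imageboundary2'}, $\partial'_2$ is surjective, hence so is $\bar\partial'_2$, onto $H_1(\IA_n,A_{2,2}(n))\cong V_{0,2}\otimes H_1(\IA_n,\Q)$, whose irreducible decomposition is \eqref{decompH1A22}. Decomposing the source $V_{0,1^3}\otimes H^A_2(\IA_n,\Q)$ via Lemma \ref{lemPettet} and the tensor product formula \eqref{Koike} — valid for $n\ge 9$ because the constituent $V_{1^4,1^2}$ of $H^A_2(\IA_n,\Q)$ has $l(\lambda^+)+l(\lambda^-)=6$, so that $6+3\le n$ — and using that a surjective equivariant map is surjective on every isotypic component, one obtains $\ker(\bar\partial'_2)\cong\bigoplus_{\underline\mu}V_{\underline\mu}^{\oplus(a_{\underline\mu}-b_{\underline\mu})}$, where $a_{\underline\mu}$ and $b_{\underline\mu}$ are the multiplicities of $V_{\underline\mu}$ in $V_{0,1^3}\otimes H^A_2(\IA_n,\Q)$ and in $V_{0,2}\otimes H_1(\IA_n,\Q)$, respectively; carrying this out yields the representation displayed in \eqref{H2A21Alb}, which is in particular non-zero. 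Where it is needed, I would confirm non-vanishing of specific classes with the explicit abelian $2$-cycles of $\IA_n$ recalled in Section \ref{subsecAbeliancycle}, exactly as in the proofs of Propositions \ref{imageboundary2} and \ref{imageboundary2'n3}.

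Finally, for the last assertion: the displayed quotient is obtained as the image of $H_2(\IA_n,A_{2,1}(n))$, so a priori it carries only the structure of a quotient vector space isomorphic to \eqref{H2A21Alb}, since the representation structure of $H_2(\IA_n,A_{2,1}(n))$ itself is not known; assuming Conjecture \ref{conjalgebraic} one deduces that $H_2(\IA_n,A_{2,1}(n))$ is algebraic, and then \eqref{H2A21Alb} splits off as a genuine quotient $\GL(n,\Z)$-representation. I expect the main obstacle to be the representation-theoretic bookkeeping in the third paragraph — expanding $V_{0,1^3}\otimes H^A_2(\IA_n,\Q)$ requires a sizeable number of Littlewood–Richardson products, and one must check $a_{\underline\mu}\ge b_{\underline\mu}$ for every $\underline\mu$, so that the surjectivity of $\bar\partial'_2$ is compatible with the two decompositions — together with pinning down carefully the claim that $\partial'_2$ factors through the second Albanese homology.
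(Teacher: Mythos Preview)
Your approach is essentially the same as the paper's: pass to $\ker(\partial'_2)$ via the long exact sequence \eqref{H1A21}, use surjectivity of $\partial'_2$ from Lemma \ref{imageboundary2'}, project to $V_{0,1^3}\otimes H^A_2(\IA_n,\Q)$ via Pettet's result, and subtract the decomposition \eqref{decompH1A22}. The paper carries out exactly these steps and records the same tensor decomposition of $V_{0,1^3}\otimes H^A_2(\IA_n,\Q)$ that you propose to compute.

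Where you go further than the paper is in justifying that $\partial'_2$ factors through the Albanese quotient. The paper simply writes down the surjection $H_2(\IA_n,A_{2,1}(n)/A_{2,2}(n))\twoheadrightarrow V_{0,1^3}\otimes H^A_2(\IA_n,\Q)$ and then subtracts \eqref{decompH1A22}, leaving implicit why the subtraction is legitimate. Your argument---that the $\IA_n$-action on $A_{2,1}(n)$ factors through the abelianization (since $[[u,f],g]\in A_{2,3}(n)=0$), so the extension class is pulled back along $\IA_n\to H_1(\IA_n)$, whence naturality of the connecting map forces $\partial'_2$ to factor through $\id\otimes a_*$---is correct and fills this gap. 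Two minor points: the remark about confirming non-vanishing with explicit abelian $2$-cycles is unnecessary here, since Lemma \ref{imageboundary2'} already gives surjectivity of $\partial'_2$ (and hence of $\bar\partial'_2$); and your reading of the final clause is slightly off---Conjecture \ref{conjalgebraic} does not make $H_2(\IA_n,A_{2,1}(n))$ itself algebraic, as you would need to know its composition factors are algebraic to apply it. The paper does not spell out this step either.
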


\begin{proof}
By the long exact sequence \eqref{H1A21}, $\ker (\partial'_2)$ is a quotient $\GL(n,\Z)$-representation of $H_2(\IA_n,A_{2,1}(n))$.
Since $\partial'_2$ is surjective by Lemma \ref{imageboundary2'}, we have the short exact sequence
\begin{gather*}
    0\to \ker(\partial'_2) \to  H_2(\IA_n, A_{2,1}(n)/A_{2,2}(n))\xrightarrow{\partial'_2} H_1(\IA_n,A_{2,2}(n))\to 0.
\end{gather*}
Since $\IA_n$ acts trivially on $A_{2,1}(n)/A_{2,2}(n)$, we have a surjective homomorphism of $\GL(n,\Z)$-representations
\begin{gather*}
     H_2(\IA_n, A_{2,1}(n)/A_{2,2}(n))\twoheadrightarrow A_{2,1}(n)/A_{2,2}(n)\otimes H^A_2(\IA_n,\Q).
\end{gather*}
By Lemma \ref{lemPettet}, we obtain for $n\ge 9$,
\begin{gather*}
    \begin{split}
        &A_{2,1}(n)/A_{2,2}(n)\otimes H^A_2(\IA_n,\Q)\\
        &\cong
        V_{1^4,1^5}\oplus V_{1^4,21^3}\oplus V_{1^4,2^21}
        \oplus V_{21^2,21^3} \oplus V_{21^2,31^2}
        \oplus V_{2^2,1^5}\oplus V_{2^2,21^3}\oplus V_{2^2,2^21}\\
        &\oplus V_{1^3,1^4}^{\oplus 3}\oplus V_{1^3,21^2}^{\oplus 4}\oplus V_{1^3,2^2}\oplus V_{1^3,31}
        \oplus V_{21,1^4}^{\oplus 2}\oplus V_{21,21^2}^{\oplus 3}\oplus V_{21,2^2}\oplus V_{21,31}\\
        &\oplus V_{1^2,1^3}^{\oplus 5}\oplus V_{1^2,21}^{\oplus 6}\oplus V_{1^2,3}
        \oplus V_{2,1^3}\oplus V_{2,21}^{\oplus 2}\oplus V_{2,3}
        \oplus V_{1,1^2}^{\oplus 4}\oplus V_{1,2}^{\oplus 4}
        \oplus V_{0,1}^{\oplus 2}\\
        &\oplus V_{0,1^5}^{\oplus 2}\oplus V_{0,21^3}^{\oplus 2}\oplus V_{0,2^21}^{\oplus 2}.
    \end{split}
\end{gather*}
Therefore, by the computation \eqref{decompH1A22}, we obtain the quotient space of $H_2(\IA_n,A_{2,1}(n))$ that is isomorphic to \eqref{H2A21Alb}.
\end{proof}

\begin{remark}
We obtain some non-trivial quotient vector space of $H_2(\IA_n,A_{2,1}(n))$ for $4\le n\le 8$ as well.

For $n=3$, we have $\im (\partial'_2)\cong A_{2,1}(3)/A_{2,2}(3)\otimes H^A_2(\IA_3,\Q)$.
However, by the work of Satoh \cite{Satoh2021}, we have a $\GL(3,\Z)$-subrepresentation of $H^2(\IA_3,\Q)$ which is not contained in $H^2_A(\IA_3,\Q)$.
Therefore, we obtain the non-triviality of $H_2(\IA_n,A_{2,1}(3))$. 
\end{remark}

\begin{remark}
In a similar way, we can also compute some quotient vector space of $H_2(\IA_n,A''_2(n))$ and prove the non-triviality.
\end{remark}

\section{The homology of $\IO_n$}\label{secH1IO}

Let $\IO_n$ denote the kernel of the canonical map from the outer automorphism group $\Out(F_n)$ of $F_n$ to $\GL(n,\Z)$.
By Lemma \ref{Innerautomorphism}, the $\Aut(F_n)$-action on $A_d(n)$ induces an action of $\Out(F_n)$ on $A_d(n)$.
In this section, we will study the homology of $\IO_n$ with coefficients in $A_2(n)$.

\subsection{Hochschild--Serre spectral sequence}

We have a short exact sequence of groups
\begin{gather}\label{exactIAIO}
    1\to \Inn(F_n) \to \IA_n \to \IO_n\to 1.
\end{gather}
Let $M$ be a right $\IA_n$-module such that the restriction is a trivial $\Inn(F_n)$-module.
The Hochschild--Serre spectral sequence associated to 
\eqref{exactIAIO} is
\begin{gather}\label{spseqIO}
E^2_{p,q}=H_p(\IO_n,H_q(\Inn(F_n),M))\Rightarrow H_{p+q}(\IA_n,M).
\end{gather}
For $n\ge 3$, we have 
\begin{gather*}
    H_i(\Inn(F_n),M)\cong 
    \begin{cases}
        M & i=0\\
        M\otimes H & i=1\\
        0 & \text{otherwise}
    \end{cases}
\end{gather*}
since we have $\Inn(F_n)\cong F_n$ and $M$ is a trivial $\Inn(F_n)$-module.

We obtain the surjectivity of the map between the first homology of $\IA_n$ and $\IO_n$.

\begin{lemma}\label{surjectivehomfromIAtoIO}
The $\GL(n,\Z)$-homomorphism
\begin{gather*}
    H_1(\IA_n,M)\to H_1(\IO_n,M)
\end{gather*}   
induced by the projection $\IA_n\to \IO_n$ is surjective.
\end{lemma}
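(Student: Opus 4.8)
The plan is to extract the relevant corner of the Hochschild--Serre spectral sequence \eqref{spseqIO} and read off the edge homomorphism. The spectral sequence has $E^2_{p,q}=H_p(\IO_n,H_q(\Inn(F_n),M))$ converging to $H_{p+q}(\IA_n,M)$, and, as recorded in the excerpt, for $n\ge 3$ the coefficient groups $H_q(\Inn(F_n),M)$ vanish for $q\ge 2$ (because $\Inn(F_n)\cong F_n$ has cohomological dimension $1$ and $M$ is a trivial $\Inn(F_n)$-module). Hence the $E^2$-page is concentrated in the two rows $q=0$ and $q=1$, so the spectral sequence degenerates sufficiently to give the usual five-term exact sequence
\begin{gather*}
    H_2(\IA_n,M)\to E^2_{2,0}\xrightarrow{d_2} E^2_{0,1}\to H_1(\IA_n,M)\to E^2_{1,0}\to 0.
\end{gather*}
Here $E^2_{1,0}=H_1(\IO_n,H_0(\Inn(F_n),M))=H_1(\IO_n,M)$, since $H_0(\Inn(F_n),M)=M$ with its induced $\IO_n$-action.

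The key point is that the final arrow $H_1(\IA_n,M)\to E^2_{1,0}=H_1(\IO_n,M)$ in the five-term sequence is precisely the edge homomorphism, which coincides with the map induced on first homology by the projection $\IA_n\twoheadrightarrow\IO_n$ (this is the standard identification of the edge map of the Hochschild--Serre spectral sequence of a group extension with the map induced by the quotient map). Exactness of the five-term sequence says this arrow is surjective, which is exactly the assertion of the lemma. One should note that the hypothesis that $M$ restricts to a trivial $\Inn(F_n)$-module is in force throughout this section (and holds for $M=A_d(n)$ by Lemma \ref{Innerautomorphism}), so the computation of $H_q(\Inn(F_n),M)$ quoted above applies; this is what makes the $E^2$-page two rows tall and forces the degeneration into a five-term sequence.

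There is essentially no obstacle here: the only thing to be careful about is the identification of the edge homomorphism with the map induced by $\IA_n\to\IO_n$, and the verification that all the maps in sight are $\GL(n,\Z)$-equivariant. The $\GL(n,\Z)$-equivariance follows because the extension \eqref{exactIAIO} is $\GL(n,\Z)$-equivariant (conjugation by lifts of elements of $\GL(n,\Z)$ in $\Aut(F_n)$ and $\Out(F_n)$ respects $\Inn(F_n)$, $\IA_n$ and $\IO_n$), so the whole spectral sequence \eqref{spseqIO}, and in particular its five-term exact sequence, consists of $\GL(n,\Z)$-modules and $\GL(n,\Z)$-homomorphisms. Thus the surjection in the five-term sequence is a surjection of $\GL(n,\Z)$-representations, completing the proof.
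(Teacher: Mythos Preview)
Your proof is correct and follows essentially the same route as the paper: both use the Hochschild--Serre spectral sequence \eqref{spseqIO} and the fact that the edge homomorphism $H_1(\IA_n,M)\to E^2_{1,0}=H_1(\IO_n,M)$ is the map induced by the projection. The paper's argument is simply the one-line observation that $E^2_{1,0}=E^{\infty}_{1,0}$ (which holds for position $(1,0)$ in any first-quadrant spectral sequence, independently of the two-row condition), whereas you package the same fact inside the five-term exact sequence; the surjectivity at the end of that sequence is exactly the equality $E^2_{1,0}=E^{\infty}_{1,0}$ together with the surjectivity of the edge map onto $E^{\infty}_{1,0}$.
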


\begin{proof}
    By using the spectral sequence \eqref{spseqIO}, we obtain the surjectivity since we have $H_1(\IO_n,M)=E^2_{1,0}=E^{\infty}_{1,0}$.
\end{proof}

\subsection{Computations of the first homology of $\IO_n$ with coefficients in $A_2(n)$}
As in Section \ref{irdecompH1}, we have the following irreducible decomposition of the first homology of $\IO_n$ with trivial coefficients:
\begin{gather*}
    H_1(\IO_n,A'_2(n))\cong 
    \begin{cases}
        V_{1^2,41}\oplus V_{1^2,5}\oplus V_{1,4}\oplus V_{1,31} & n\ge 4\\
        V_{1^2,5}\oplus V_{1,4}\oplus V_{1,31} & n= 3,
    \end{cases}
\end{gather*}
\begin{gather*}
\begin{split}
    &H_1(\IO_n, A''_2(n)/A_{2,1}(n))\\
    &\cong 
    \begin{cases}
        V_{1^2,2^21}\oplus V_{1^2,32}\oplus V_{1,21^2}\oplus V_{1,2^2} \oplus V_{1,31}\oplus V_{0,1^3}\oplus V_{0,21} & n\ge 5\\
        V_{1^2,32}\oplus V_{1,21^2}\oplus V_{1,2^2} \oplus V_{1,31}\oplus V_{0,1^3}\oplus V_{0,21} & n= 4\\
        V_{1,31}\oplus V_{0,1^3}\oplus V_{0,21} & n=3,
    \end{cases}
\end{split}
\end{gather*}
\begin{gather*}
\begin{split}
      &H_1(\IO_n, A_{2,1}(n)/A_{2,2}(n))\\
      &\cong 
      \begin{cases}
          V_{1^2,1^4}\oplus V_{1^2,21^2}\oplus V_{1,1^3}\oplus V_{1,21}\oplus V_{0,1^2}\oplus V_{0,2} & n\ge 6\\
          V_{1^2,21^2}\oplus V_{1,1^3}\oplus V_{1,21}\oplus V_{0,1^2}\oplus V_{0,2} & n=5\\
          V_{1,21}\oplus V_{0,1^2}\oplus V_{0,2} & n=4\\
          V_{0,2} & n=3,
      \end{cases}    
\end{split}
\end{gather*}
\begin{gather*}
 \begin{split}
    H_1(\IO_n,A_{2,2}(n))
    \cong 
        \begin{cases}
            V_{1^2,21}\oplus V_{1^2,3}\oplus V_{1,1^2}\oplus V_{1,2} & n\ge 4\\
            V_{1^2,3}\oplus V_{1,1^2}\oplus V_{1,2} & n=3.
        \end{cases}     
 \end{split}
\end{gather*}

By using the above computation of irreducible decompositions and Lemma \ref{surjectivehomfromIAtoIO}, and an argument similar to the case of $\IA_n$, we obtain the first homology of $\IO_n$ with coefficients in the spaces of Jacobi diagrams.

\begin{theorem}\label{thmH1IOA''2A22}
    We have
    \begin{gather*}
    \begin{split}
       H_1(\IO_n,A''_2(n)/A_{2,2}(n))
       \cong
        \begin{cases}
        V_{1^2,2^21}\oplus V_{1^2,32}\oplus V_{1,21^2}\oplus V_{1,2^2} \oplus V_{1,31}
        \oplus V_{0,21}\oplus V_{0,2} & n\ge 5\\
        V_{1^2,32}\oplus V_{1,21^2}\oplus V_{1,2^2} \oplus V_{1,31}\oplus V_{0,21}\oplus V_{0,2} & n= 4\\
        V_{1,31}\oplus V_{0,21}\oplus V_{0,2} & n=3.
        \end{cases}
    \end{split}
    \end{gather*}
\end{theorem}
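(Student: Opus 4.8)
The plan is to mirror, in the outer setting, the argument that was used to compute $H_1(\IA_n,A''_2(n)/A_{2,2}(n))$ in Theorem \ref{thmH1A''2/A22}, replacing $\IA_n$ by $\IO_n$ throughout. First I would write down the two long exact sequences associated to the short exact sequences of $\Aut(F_n)$-modules
\begin{gather*}
    0\to A_{2,2}(n)\to A''_2(n)\to A''_2(n)/A_{2,2}(n)\to 0,\\
    0\to A_{2,1}(n)/A_{2,2}(n)\to A''_2(n)/A_{2,2}(n)\to A''_2(n)/A_{2,1}(n)\to 0,
\end{gather*}
now in the homology of $\IO_n$. Since $\IO_n$ acts trivially on $A_{2,2}(n)$, $A_{2,1}(n)/A_{2,2}(n)$, $A''_2(n)/A_{2,1}(n)$, the $H_0$-terms are computed exactly as before: $H_0(\IO_n,A_{2,2}(n))\cong V_{0,2}$, and $H_0(\IO_n,A''_2(n))\cong H_0(\IO_n,A''_2(n)/A_{2,2}(n))\cong A''_2(n)/A_{2,1}(n)$, so the connecting map out of $H_1(\IO_n,A''_2(n)/A_{2,1}(n))$ has image exactly $V_{0,1^3}$, and the connecting map out of $H_1(\IO_n,A''_2(n)/A_{2,2}(n))$ surjects onto $V_{0,2}$. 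This yields, just as in \eqref{H1A''2/A22}, a short exact sequence
\begin{gather*}
    0\to \Cok(\partial_2^{\IO})\to H_1(\IO_n,A''_2(n)/A_{2,2}(n))\to \ker(\partial_1^{\IO})\to 0,
\end{gather*}
where $\partial_2^{\IO}\colon H_2(\IO_n,A''_2(n)/A_{2,1}(n))\to H_1(\IO_n,A_{2,1}(n)/A_{2,2}(n))$ and $\partial_1^{\IO}$ are the $\IO_n$-analogues of $\partial_2,\partial_1$.

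Next I would identify $\ker(\partial_1^{\IO})$ using the already-recorded irreducible decomposition of $H_1(\IO_n,A''_2(n)/A_{2,1}(n))$ together with $\im(\partial_1^{\IO})\cong V_{0,1^3}$; this gives the predicted summands $V_{1^2,2^21}\oplus V_{1^2,32}\oplus V_{1,21^2}\oplus V_{1,2^2}\oplus V_{1,31}\oplus V_{0,21}$ in the appropriate ranges of $n$. The remaining point is to show $\Cok(\partial_2^{\IO})\cong V_{0,2}$, i.e. that $\im(\partial_2^{\IO})$ is as large as possible inside $H_1(\IO_n,A_{2,1}(n)/A_{2,2}(n))$. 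Here I would invoke Lemma \ref{surjectivehomfromIAtoIO}: the projection $\IA_n\twoheadrightarrow \IO_n$ induces surjections on first homology with these (trivial on $\Inn(F_n)$) coefficients, and by naturality of the long exact sequences one gets a commutative square relating $\partial_2$ and $\partial_2^{\IO}$. Since $\im(\partial_2)$ was computed in Proposition \ref{imageboundary2} to contain everything except a copy of $V_{0,2}$, and since the maps $H_1(\IA_n,A_{2,1}(n)/A_{2,2}(n))\to H_1(\IO_n,A_{2,1}(n)/A_{2,2}(n))$ and $H_2(\IA_n,A''_2(n)/A_{2,1}(n))\to H_2(\IO_n,A''_2(n)/A_{2,1}(n))$ are compatible with the respective decompositions, the image $\im(\partial_2^{\IO})$ contains the image of $\im(\partial_2)$ under the surjection; comparing with the (smaller) decomposition of $H_1(\IO_n,A_{2,1}(n)/A_{2,2}(n))$ forces $\Cok(\partial_2^{\IO})\subset V_{0,2}$. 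Combined with the surjection onto $V_{0,2}$ coming from \eqref{H1A2''new}'s $\IO_n$-analogue, this pins down $\Cok(\partial_2^{\IO})\cong V_{0,2}$ and shows the sequence splits as $\GL(n,\Z)$-representations, exactly as in the proof of Theorem \ref{thmH1A''2/A22}.

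The main obstacle I anticipate is the bookkeeping in the $n=3$ and $n=4$ cases, where several of the $\GL(n,\Z)$-representations in the generic decomposition degenerate or vanish, so one has to check that the surjectivity/image comparisons still carve out precisely $V_{0,2}$ and nothing spurious survives; in particular for $n=3$ one should double-check against the $\IA_3$-computation in Theorem \ref{thmH1A2''3} (noting the $\IO_3$ first homology drops the $V_{1,2^2}$ and the doubled $V_{0,21}$ relative to the $\IA_3$ answer, consistent with the stated $H_1(\IO_3,A''_2(3)/A_{2,2}(3))\cong V_{1,31}\oplus V_{0,21}\oplus V_{0,2}$). A secondary point requiring a line of care is verifying that $A''_2(n)/A_{2,2}(n)$, as an $\IA_n$-module, is indeed trivial on $\Inn(F_n)$ so that Lemma \ref{surjectivehomfromIAtoIO} applies — but this is immediate from Lemma \ref{Innerautomorphism}, which says $\Inn(F_n)$ acts trivially on all of $A_d(n)$.
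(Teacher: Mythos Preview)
Your proposal is correct and follows essentially the same route as the paper, which simply records that the proof is analogous to that of Theorem~\ref{thmH1A''2/A22}. Your use of Lemma~\ref{surjectivehomfromIAtoIO} together with naturality of the connecting maps to transport the lower bound on $\im(\partial_2)$ from Proposition~\ref{imageboundary2} to $\im(\partial_2^{\IO})$ is a clean way to avoid redoing the abelian-cycle computations, and the small-$n$ bookkeeping you flag indeed goes through (for $n=3$ the inclusion $\Cok(\partial_2^{\IO})\subset V_{0,2}$ is automatic since $H_1(\IO_3,A_{2,1}(3)/A_{2,2}(3))\cong V_{0,2}$).
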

\begin{proof}
    The proof is similar to that of Theorem \ref{thmH1A''2/A22}.
\end{proof}

\begin{theorem}\label{thmH1IOA21}
We have
\begin{gather*}
H_1(\IO_n,A_{2,1}(n))\cong 
        \begin{cases}
          V_{1^2,1^4}\oplus V_{1^2,21^2}\oplus V_{1,1^3}\oplus V_{1,21}\oplus V_{0,1^2} & n\ge 6\\
          V_{1^2,21^2}\oplus V_{1,1^3}\oplus V_{1,21}\oplus V_{0,1^2} & n=5\\
          V_{1,21}\oplus V_{0,1^2} & n=4\\
          V_{1^2,3}\oplus V_{1,1^2}\oplus V_{1,2} & n=3.
        \end{cases}
\end{gather*}
\end{theorem}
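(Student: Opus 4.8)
The plan is to imitate the arguments of Sections~\ref{secH1A21} and~\ref{secH1A''23}, transferring the $\IA_n$-computations across the surjection $\IA_n \twoheadrightarrow \IO_n$, and to treat $n \ge 4$ and $n = 3$ separately.

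For $n \ge 4$ I would apply $H_*(\IO_n,-)$ to $0 \to A_{2,2}(n) \to A_{2,1}(n) \to A_{2,1}(n)/A_{2,2}(n) \to 0$, obtaining the $\IO_n$-analogue of the long exact sequence \eqref{H1A21}. Since $\IO_n$ acts trivially on $A_{2,2}(n)$ and on $A_{2,1}(n)/A_{2,2}(n)$, one gets $H_0(\IO_n,A_{2,2}(n)) \cong V_{0,2}$ and $H_0(\IO_n,A_{2,1}(n)) \cong H_0(\IO_n,A_{2,1}(n)/A_{2,2}(n)) \cong A_{2,1}(n)/A_{2,2}(n)$ with the last arrow an isomorphism, so the connecting map $\partial_1^{\IO}$ is onto $V_{0,2}$ and there is a short exact sequence $0 \to \Cok(\partial_2^{\IO}) \to H_1(\IO_n,A_{2,1}(n)) \to \ker(\partial_1^{\IO}) \to 0$. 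The subrepresentation $\ker(\partial_1^{\IO})$ is obtained from the decomposition of $H_1(\IO_n,A_{2,1}(n)/A_{2,2}(n))$ recorded above by deleting its unique copy of $V_{0,2}$, and this is already the module claimed; so it suffices to check that $\partial_2^{\IO}$ is surjective. For this I would use that $\IA_n \twoheadrightarrow \IO_n$ induces a morphism from \eqref{H1A21} to its $\IO_n$-counterpart, giving $\partial_2^{\IO}\circ f = g\circ\partial_2'$ for the induced maps $f\colon H_2(\IA_n,A_{2,1}(n)/A_{2,2}(n))\to H_2(\IO_n,A_{2,1}(n)/A_{2,2}(n))$ and $g\colon H_1(\IA_n,A_{2,2}(n))\to H_1(\IO_n,A_{2,2}(n))$, with $\partial_2'$ as in \eqref{H1A21}. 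Now $\partial_2'$ is surjective by Lemma~\ref{imageboundary2'} and $g$ is surjective by Lemma~\ref{surjectivehomfromIAtoIO} (note $A_{2,2}(n)$ is $\Inn(F_n)$-trivial by Lemma~\ref{Innerautomorphism}), hence $\partial_2^{\IO}$ is surjective and $H_1(\IO_n,A_{2,1}(n)) \cong \ker(\partial_1^{\IO})$.

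For $n = 3$ the map $\partial_2^{\IO}$ need not be surjective (cf.\ Proposition~\ref{imageboundary2'n3}), so instead I would work directly with the Hochschild--Serre spectral sequence \eqref{spseqIO} for the $\Inn(F_3)$-trivial module $M = A_{2,1}(3)$, whose five-term exact sequence reads
\[
H_2(\IA_3,A_{2,1}(3)) \to H_2(\IO_3,A_{2,1}(3)) \to (A_{2,1}(3)\otimes H)_{\IO_3} \xrightarrow{\ \theta\ } H_1(\IA_3,A_{2,1}(3)) \to H_1(\IO_3,A_{2,1}(3)) \to 0 ,
\]
using $H_1(\Inn(F_3),A_{2,1}(3)) \cong A_{2,1}(3)\otimes H$. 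Since $\Inn(F_3)$ acts trivially on $A_{2,1}(3)$ and $\IO_3$ acts trivially on $H$, one has $(A_{2,1}(3)\otimes H)_{\IO_3} \cong H_0(\IA_3,A_{2,1}(3))\otimes H \cong (A_{2,1}(3)/A_{2,2}(3))\otimes H \cong V_{0,1^3}(3)\otimes V_{1,0}(3) \cong V_{0,1^2}(3)$ by \eqref{Koike}, the traceless summand $V_{1,1^3}(3)$ vanishing for $n = 3$. Thus $H_1(\IO_3,A_{2,1}(3)) \cong \Cok(\theta)$, and it is enough to show $\theta \ne 0$: then $\theta$ is injective, its source being irreducible, and since $V_{0,1^2}$ occurs with multiplicity one in $H_1(\IA_3,A_{2,1}(3)) \cong V_{1^2,3}\oplus V_{1,1^2}\oplus V_{1,2}\oplus V_{0,1^2}$ (Theorem~\ref{thmH1A213}), the cokernel is $V_{1^2,3}\oplus V_{1,1^2}\oplus V_{1,2}$, as claimed. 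To exhibit a nonzero value, note that $\theta$ is induced by $\Inn(F_3) \hookrightarrow \IA_3$ and hence sends $[u_{123}]\otimes e_1 \in (A_{2,1}(3)\otimes H)_{\IO_3}$ to the class $[u_{123}\otimes\sigma_{x_1}] \in H_1(\IA_3,A_{2,1}(3))$ of the $1$-cycle carried by the inner automorphism $\sigma_{x_1}$, and this class is nonzero because it is the image under the section $s$ from the proof of Theorem~\ref{thmH1A213} of the highest weight vector of $V_{0,1^2}$.

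The main obstacle is the case $n = 3$: for $n \ge 4$ the computation is formal once Lemmas~\ref{imageboundary2'} and~\ref{surjectivehomfromIAtoIO} are available, whereas for $n = 3$ one must identify precisely which copy of $V_{0,1^2}$ in $H_1(\IA_3,A_{2,1}(3))$ dies under $\IA_3 \twoheadrightarrow \IO_3$, the key point being that it is the image of a genuinely inner $1$-cycle, as supplied by the section $s$. Minor routine inputs are the $\GL(3,\Z)$-identity $V_{0,1^3}(3)\otimes V_{1,0}(3) \cong V_{0,1^2}(3)$ and, for $n \ge 4$, the bookkeeping that removing one $V_{0,2}$ from the recorded decomposition of $H_1(\IO_n,A_{2,1}(n)/A_{2,2}(n))$ yields the module in the statement.
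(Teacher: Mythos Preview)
Your proposal is correct. For $n\ge 4$ it is essentially the paper's argument, with the added detail that surjectivity of $\partial_2^{\IO}$ follows from the commuting square $\partial_2^{\IO}\circ f=g\circ\partial_2'$ together with Lemmas~\ref{imageboundary2'} and~\ref{surjectivehomfromIAtoIO}; the paper simply says ``similar to Theorem~\ref{thmH1A21}'', which amounts to the same thing.

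For $n=3$ your route differs from the paper's. The paper argues by a squeeze: from the $\IO_3$-analogue of \eqref{H1A21shortex} one gets $\ker(\partial_1')=0$ (since $H_1(\IO_3,A_{2,1}(3)/A_{2,2}(3))\cong V_{0,2}$), so $H_1(\IO_3,A_{2,1}(3))\cong\Cok(\partial_2')$ is a quotient of $H_1(\IO_3,A_{2,2}(3))\cong V_{1^2,3}\oplus V_{1,1^2}\oplus V_{1,2}$; on the other hand $E^{\infty}_{0,1}$ in \eqref{spseqIO} is a quotient of $E^2_{0,1}\cong V_{0,1^2}$, and comparing the filtration $E^{\infty}_{1,0}\subset H_1(\IA_3,A_{2,1}(3))$ with Theorem~\ref{thmH1A213} forces both bounds to be equalities. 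Your argument bypasses the $\IO_3$ long exact sequence entirely and works solely with the five-term sequence, pinning down the kernel of $H_1(\IA_3,A_{2,1}(3))\twoheadrightarrow H_1(\IO_3,A_{2,1}(3))$ as $\im\theta$ by exhibiting the explicit nonzero value $\theta([u_{123}]\otimes e_1)=[u_{123}\otimes\sigma_{x_1}]$ via the section $s$ from Theorem~\ref{thmH1A213}. This is slightly more direct and makes transparent exactly which copy of $V_{0,1^2}$ is killed; the paper's squeeze avoids evaluating $\theta$ but uses one more exact sequence.
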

\begin{proof}
    For $n\ge 4$, the proof is similar to that of Theorem \ref{thmH1A21}.
    
    For $n=3$, corresponding to \eqref{H1A21shortex},
    we have the short exact sequence
    $$0\to \Cok(\partial'_2)\to H_1(\IO_3,A_{2,1}(3))\to \ker (\partial'_1)=0\to 0,$$
    where $\Cok(\partial'_2)$ is a quotient representation of $H_1(\IO_3, A_{2,2}(3))\cong V_{1^2,3}\oplus V_{1,1^2}\oplus V_{1,2}$.
    
    On the other hand, 
    $E^{\infty}_{0,1}=\Cok (d^2_{2,0}: E^2_{2,0}\to E^2_{0,1})$ is a quotient representation of
    $E^2_{0,1}=H_0(\IO_3,H\otimes A_{2,1}(3))\cong V_{0,1^2}$.
    Since we have $H_1(\IA_3, A_{2,1}(3))\cong V_{1^2,3}\oplus V_{1,1^2}\oplus V_{1,2}\oplus V_{0,1^2}$ by Theorem \ref{thmH1A213},
    we obtain $H_1(\IO_n,A_{2,1}(3))\cong V_{1^2,3}\oplus V_{1,1^2}\oplus V_{1,2}$.
\end{proof}

\begin{theorem}\label{thmH1IOA''2}
For $n\ge 4$, we have
\begin{gather*}
  \begin{split}
        H_1(\IO_n, A''_2(n))\cong 
        \begin{cases}
        V_{1^2,2^21}\oplus V_{1^2,32}\oplus V_{1,21^2}\oplus V_{1,2^2} \oplus V_{1,31}\oplus V_{0,21} & n\ge 5\\
        V_{1^2,32}\oplus V_{1,21^2}\oplus V_{1,2^2} \oplus V_{1,31}\oplus V_{0,21} & n= 4\\
        V_{1,31}\oplus V_{0,21}& n=3.
        \end{cases}
  \end{split} 
\end{gather*}
\end{theorem}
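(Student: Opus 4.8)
The plan is to run, exactly as in the proof of Theorem~\ref{thmH1A2''}, the long exact sequence in $\IO_n$-homology attached to the short exact sequence of $\Out(F_n)$-modules
\begin{gather*}
0\to A_{2,1}(n)\to A''_2(n)\to A''_2(n)/A_{2,1}(n)\to 0
\end{gather*}
(this is a sequence of $\Out(F_n)$-modules because $\Inn(F_n)$ acts trivially on Jacobi diagrams by Lemma~\ref{Innerautomorphism}). Since the $\IA_n$-action on $A_{2,1}(n)$ and $A''_2(n)$ factors through $\IO_n$, we have $H_0(\IO_n,A_{2,1}(n))\cong A_{2,1}(n)/A_{2,2}(n)\cong V_{0,1^3}$ and $H_0(\IO_n,A''_2(n))\cong A''_2(n)/A_{2,1}(n)\cong V_{0,2^2}$, so the map between these $H_0$'s is a $\GL(n,\Z)$-homomorphism between non-isomorphic irreducibles, hence zero. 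Therefore the connecting map $\partial''_1\colon H_1(\IO_n,A''_2(n)/A_{2,1}(n))\to H_0(\IO_n,A_{2,1}(n))$ is onto with $\im(\partial''_1)\cong V_{0,1^3}$, and we get a short exact sequence of $\GL(n,\Z)$-representations
\begin{gather*}
0\to \Cok(\partial''_2)\to H_1(\IO_n,A''_2(n))\to \ker(\partial''_1)\to 0,
\end{gather*}
where $\partial''_2\colon H_2(\IO_n,A''_2(n)/A_{2,1}(n))\to H_1(\IO_n,A_{2,1}(n))$. As the decomposition of $H_1(\IO_n,A''_2(n)/A_{2,1}(n))$ recalled in this section contains $V_{0,1^3}$ with multiplicity one, complete reducibility of algebraic $\GL(n,\Z)$-representations identifies $\ker(\partial''_1)$ with the displayed right-hand side of the theorem, so it remains to prove $\Cok(\partial''_2)=0$.

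For that I would use the $\IO_n$-analogue of the commutative triangle \eqref{commutativediagrampartial2}, namely $\partial_2=\pi_*\circ\partial''_2$, coming from the evident map of short exact sequences onto $0\to A_{2,1}(n)/A_{2,2}(n)\to A''_2(n)/A_{2,2}(n)\to A''_2(n)/A_{2,1}(n)\to 0$; here $\partial_2\colon H_2(\IO_n,A''_2(n)/A_{2,1}(n))\to H_1(\IO_n,A_{2,1}(n)/A_{2,2}(n))$ is the connecting map of the latter sequence and $\pi_*\colon H_1(\IO_n,A_{2,1}(n))\to H_1(\IO_n,A_{2,1}(n)/A_{2,2}(n))$ is induced by the projection. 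For $n\ge 4$ the map $\pi_*$ is injective: the proof of Theorem~\ref{thmH1IOA21} shows that $\partial'_2\colon H_2(\IO_n,A_{2,1}(n)/A_{2,2}(n))\to H_1(\IO_n,A_{2,2}(n))$ is surjective (the $\IO_n$-counterpart of Lemma~\ref{imageboundary2'}), so $\Cok(\partial'_2)=0$ and injectivity of $\pi_*$ follows from the long exact sequence of $0\to A_{2,2}(n)\to A_{2,1}(n)\to A_{2,1}(n)/A_{2,2}(n)\to 0$. Hence $\im(\partial''_2)\cong\im(\partial_2)$ as $\GL(n,\Z)$-representations.

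Finally I would read off $\im(\partial_2)$ from the proof of Theorem~\ref{thmH1IOA''2A22}, where $\Cok(\partial_2)\cong V_{0,2}$; comparing the decompositions of $H_1(\IO_n,A_{2,1}(n)/A_{2,2}(n))$ and of $H_1(\IO_n,A_{2,1}(n))$ recalled above gives $H_1(\IO_n,A_{2,1}(n)/A_{2,2}(n))\cong H_1(\IO_n,A_{2,1}(n))\oplus V_{0,2}$ for $n\ge 4$, so $\im(\partial_2)\cong H_1(\IO_n,A_{2,1}(n))$. Since $\im(\partial''_2)$ is a subrepresentation of $H_1(\IO_n,A_{2,1}(n))$ of the same finite dimension, it is all of it: $\partial''_2$ is surjective, $\Cok(\partial''_2)=0$, and the short exact sequence above yields $H_1(\IO_n,A''_2(n))\cong\ker(\partial''_1)$, which is the asserted formula for $n\ge 4$. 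For $n=3$, where $\partial''_2$ need not be surjective, I would instead combine the surjection $H_1(\IA_3,A''_2(3))\twoheadrightarrow H_1(\IO_3,A''_2(3))$ of Lemma~\ref{surjectivehomfromIAtoIO} with the value of $H_1(\IA_3,A''_2(3))$ from Theorem~\ref{thmH1A2''3} and an analysis of $E^\infty_{0,1}$ in the spectral sequence \eqref{spseqIO}, exactly as in the $n=3$ part of the proof of Theorem~\ref{thmH1IOA21}.

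The main obstacle is not the diagram chase itself but the two structural facts it rests on: surjectivity of $\partial'_2$ for $\IO_n$ and the computation $\Cok(\partial_2)\cong V_{0,2}$ for $\IO_n$. Each is the $\IO_n$-counterpart of an $\IA_n$-computation carried out by explicitly evaluating connecting maps on abelian $2$-cycles, and transporting those computations requires controlling the extra first-homology classes coming from $\Inn(F_n)$ via the extension $1\to\Inn(F_n)\to\IA_n\to\IO_n\to 1$ and its Hochschild--Serre spectral sequence \eqref{spseqIO}.
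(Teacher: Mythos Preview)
Your approach is correct and essentially identical to the paper's: for $n\ge 4$ the paper writes only ``the proof is similar to that of Theorem~\ref{thmH1A2''}'', and you have unpacked exactly that argument (long exact sequence, factorization $\partial_2=\pi_*\circ\partial''_2$, injectivity of $\pi_*$, and the dimension count forcing $\partial''_2$ to be surjective); for $n=3$ your spectral-sequence sketch matches the paper's own treatment.

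The concern in your final paragraph is unwarranted. The two structural facts you flag---surjectivity of $\partial'_2$ for $\IO_n$ and $\Cok(\partial_2)\cong V_{0,2}$ for $\IO_n$---are precisely the content of the proofs of Theorems~\ref{thmH1IOA21} and~\ref{thmH1IOA''2A22}, which precede the present theorem, so you may cite them as black boxes. Moreover, the transport from $\IA_n$ to $\IO_n$ is automatic rather than delicate: by naturality of the connecting homomorphism under the projection $p\colon\IA_n\to\IO_n$ one has $\partial'^{\IO_n}_2\circ p_*=p_*\circ\partial'^{\IA_n}_2$, and since $\partial'^{\IA_n}_2$ is surjective (Lemma~\ref{imageboundary2'}) and $p_*$ on $H_1$ with trivial coefficients is surjective, it follows that $\partial'^{\IO_n}_2$ is surjective as well. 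The same naturality argument, together with the fact that $\im(\partial_2^{\IA_n})$ contains \emph{all} copies of each irreducible other than $V_{0,2}$, handles the $\IO_n$-analogue of Proposition~\ref{imageboundary2}. No separate bookkeeping of classes coming from $\Inn(F_n)$ is needed.
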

\begin{proof}
    For $n\ge 4$, the proof is similar to that of Theorem \ref{thmH1A2''}.

    For $n=3$, correponding to \eqref{H1A2''shortex}, we have
    $$
    0\to \Cok(\partial''_2)\to H_1(\IO_3,A''_2(3))\to \ker (\partial''_1)\cong V_{1,31}\oplus V_{0,21}\to 0,
    $$
    where $\Cok(\partial''_2)$ is a quotient representation of $H_1(\IO_3, A_{2,1}(3))\cong V_{1^2,3}\oplus V_{1,1^2}\oplus V_{1,2}$.

    On the other hand, $E^{\infty}_{0,1}=\Cok (d^2_{2,0}: E^2_{2,0}\to E^2_{0,1})$ is a quotient representation of
    $E^2_{0,1}=H_0(\IO_3,H\otimes A''_2(3))\cong V_{1,2^2}\oplus V_{0,21}$.
    Since we have $H_1(\IA_3, A''_2(3))\cong V_{1,2^2}\oplus V_{1,31}\oplus V_{0,21}^{\oplus 2}$ by Theorem \ref{thmH1A2''3},
    we obtain $H_1(\IO_n,A''_2(3))\cong V_{1,31}\oplus V_{0,21}$.
\end{proof}

By Theorems \ref{thmH1IOA''2A22}, \ref{thmH1IOA21} and \ref{thmH1IOA''2}, the surjectivity of the map between the second homology of $\IA_n$ and $\IO_n$ follows from an argument similar to Lemma \ref{surjectivehomfromIAtoIO}.

\begin{proposition}
Let $M= A''_2(n)/A_{2,2}(n), A_{2,1}(n), A''_2(n)$.
Then the $\GL(n,\Z)$-homomorphism 
\begin{gather*}
    H_2(\IA_n,M)\to H_2(\IO_n,M)
\end{gather*}
induced by the projection $\IA_n\to \IO_n$ is surjective.   
\end{proposition}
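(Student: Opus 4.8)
The plan is to read the statement off the Hochschild--Serre spectral sequence \eqref{spseqIO} of the extension \eqref{exactIAIO}, pushing the argument of Lemma \ref{surjectivehomfromIAtoIO} up by one degree. Since $\Inn(F_n)\cong F_n$ has homological dimension $1$ and acts trivially on $M$ by Lemma \ref{Innerautomorphism}, the sequence is concentrated in the rows $q=0,1$, with $E^2_{p,0}=H_p(\IO_n,M)$ and $E^2_{p,1}=H_p(\IO_n,M\otimes H)$, and its five-term exact sequence reads
\begin{gather*}
    H_2(\IA_n,M)\xrightarrow{\rho}H_2(\IO_n,M)\xrightarrow{d^2}H_0(\IO_n,M\otimes H)\xrightarrow{\alpha}H_1(\IA_n,M)\xrightarrow{\beta}H_1(\IO_n,M)\to 0,
\end{gather*}
where $\rho$ is the map induced by $\IA_n\to\IO_n$. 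Hence $\rho$ is surjective if and only if $d^2=0$, if and only if $\alpha$ is injective, and this last statement is what I would establish. Note that the cokernel of $\rho$ is isomorphic to the image of $d^2$, which lies inside the finite-dimensional space $H_0(\IO_n,M\otimes H)$; so although $H_2(\IA_n,M)$ itself may be large (as it is for $n=3$), the surjectivity of $\rho$ is a finite-dimensional question.

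First I would identify the source of $\alpha$. As $\IO_n$ acts trivially on $H=H_1(F_n,\Q)$ and $M_{\IO_n}=M_{\IA_n}$ (because $\IA_n\twoheadrightarrow\IO_n$), we get $H_0(\IO_n,M\otimes H)\cong M_{\IA_n}\otimes H$. Using $H_0(\IA_n,A''_2(n)/A_{2,2}(n))\cong H_0(\IA_n,A''_2(n))\cong A''_2(n)/A_{2,1}(n)\cong V_{0,2^2}$ and $H_0(\IA_n,A_{2,1}(n))\cong A_{2,1}(n)/A_{2,2}(n)\cong V_{0,1^3}$, and computing the tensor product with $H$ via \eqref{Koike}, this yields $H_0(\IO_n,M\otimes H)\cong V_{1,2^2}\oplus V_{0,21}$ when $M=A''_2(n)/A_{2,2}(n)$ or $A''_2(n)$, and $H_0(\IO_n,M\otimes H)\cong V_{1,1^3}\oplus V_{0,1^2}$ when $M=A_{2,1}(n)$ with $n\ge 4$, and $\cong V_{0,1^2}$ when $M=A_{2,1}(3)$.

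The key step is then a dimension count. Since $\beta$ is surjective, $\dim\im\alpha=\dim H_1(\IA_n,M)-\dim H_1(\IO_n,M)$. Comparing Theorems \ref{thmH1A''2/A22}, \ref{thmH1A21}, \ref{thmH1A213}, \ref{thmH1A2''} and \ref{thmH1A2''3} for $\IA_n$ with Theorems \ref{thmH1IOA''2A22}, \ref{thmH1IOA21} and \ref{thmH1IOA''2} for $\IO_n$ shows that, in each of the three cases and for every $n\ge 3$, this difference is exactly the representation $H_0(\IO_n,M\otimes H)$ computed above (for instance, for $M=A''_2(n)$ with $n\ge 5$ the difference is $V_{1,2^2}\oplus V_{0,21}$, and for $M=A_{2,1}(3)$ it is $V_{0,1^2}$). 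Therefore $\dim\im\alpha=\dim H_0(\IO_n,M\otimes H)$, so $\alpha$ is injective, $d^2=0$, and $\rho$ is surjective.

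I do not anticipate a conceptual obstacle; the only real work is the last step, i.e.\ verifying the numerical identity $\dim H_1(\IA_n,M)-\dim H_1(\IO_n,M)=\dim\bigl(M_{\IA_n}\otimes H\bigr)$ in all cases. The point requiring care is the small ranges $n=3,4$, where several of the $V_{\underline\lambda}(n)$ collapse to $0$, the shapes of the $H_1$-decompositions change, and one must check that the $\IA_n$- and $\IO_n$-sides still differ by precisely $M_{\IA_n}\otimes H$.
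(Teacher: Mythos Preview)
Your argument is correct and is precisely the one the paper has in mind: use the two-row Hochschild--Serre spectral sequence \eqref{spseqIO}, reduce the surjectivity of $H_2(\IA_n,M)\to H_2(\IO_n,M)$ to the vanishing of $d^2_{2,0}$, and deduce $d^2_{2,0}=0$ from the comparison of $H_1(\IA_n,M)$ with $H_1(\IO_n,M)$ given by Theorems \ref{thmH1A''2/A22}--\ref{thmH1A2''3} and \ref{thmH1IOA''2A22}--\ref{thmH1IOA''2}. The paper records this only as ``an argument similar to Lemma \ref{surjectivehomfromIAtoIO}'' together with the remark that $d^2_{2,0}=0$; you have simply written out the five-term exact sequence and the dimension count that make this explicit.
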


\begin{remark}
We have $d^2_{2,0}=0$ in the spectral sequence \eqref{spseqIO}.
If the differentials $d^2_{i,0}$ and $d^2_{i+1,0}$ are zero, then we obtain an isomorphism of $\GL(n,\Z)$-representations
\begin{gather*}
        \gr H_i(\IA_n,M)\cong H_i(\IO_n,M)\oplus H_{i-1}(\IO_n,H\otimes M),
\end{gather*}
where $\gr H_i(\IA_n,M)$ denotes the graded $\GL(n,\Z)$-representation associated to the filtration of $H_i(\IA_n,M)$ of length $2$.
If moreover, $H_i(\IA_n,M)$ is algebraic, then $H_i(\IO_n,M)$ is also algebraic and we obtain an isomorphism of $\GL(n,\Z)$-representations
\begin{gather*}
H_i(\IA_n,M)\cong H_i(\IO_n,M)\oplus H_{i-1}(\IO_n,H\otimes M).
\end{gather*}
\end{remark}

\section{Perspectives}\label{secperspectives}

In this section, we will give some perspectives on the first homology of $\IA_n$ with coefficients in the spaces $A_d(n)$ of Jacobi diagrams of $d$ not smaller than $3$.

In this paper, we have used the long exact sequences of homology associated to the short exact sequences of coefficients.
By using the spectral sequence associated to the filtered chain complex 
\begin{gather*}
   0\subset C_*(\IA_n,A_{2,2}(n))\subset C_*(\IA_n,A_{2,1}(n))\subset C_*(\IA_n,A''_2(n)),
\end{gather*}
we can directly compute the $\GL(n,\Z)$-module structure of the associated graded of $H_1(\IA_n,A''_2(n))$ with respect to the filtration induced by the above filtration of chain complexes.
We expect that we can use this strategy to compute the first homology of $\IA_n$ with coefficients in $A_d(n)$ with $d\ge 3$.

\bibliographystyle{plain}
\bibliography{reference.bib}

\end{document}